\documentclass[11pt]{article}

\usepackage{mathrsfs}
\usepackage{enumerate}
\usepackage[section]{algorithm}
\usepackage{algorithmic}
\usepackage{multirow}
\usepackage{xcolor}

\usepackage{graphicx}
\usepackage{amsmath,amsfonts,amsthm,amsbsy,amssymb}
\usepackage{fixmath}
\usepackage{amssymb,latexsym}

\usepackage{multirow} 

\usepackage{hyperref}

\usepackage{cleveref}

\def\by{\pmb{y}}

\def\bbC{\mathbb{C}}
\def\bbI{\mathbb{I}}

\def\bbR{\mathbb{R}}

\def\scrG{\mathscr{G}}

\def\cP{{\cal P}}

\def\cR{{\cal R}}
\def\cX{{\cal X}}
\def\cY{{\cal Y}}

\def\wtd{\widetilde}
\def\what{\widehat}

\usepackage{accents}

\newcommand\STM[2]{{\rm St}(#1,#2)}

\DeclareMathOperator{\diag}{diag}
\DeclareMathOperator{\dist}{dist}
\DeclareMathOperator{\eig}{eig}

\DeclareMathOperator{\rank}{rank}

\DeclareMathOperator{\tr}{tr}
\DeclareMathOperator{\ufd}{ufd}

\DeclareMathOperator{\F}{F}
\DeclareMathOperator{\HH}{H}
\DeclareMathOperator{\T}{T}
\DeclareMathOperator{\UI}{ui}

\DeclareMathOperator{\KKT}{KKT}

\newtheorem{theorem}{Theorem}[section]
\newtheorem{lemma}{Lemma}[section]

\theoremstyle{definition}

\newtheorem{remark}{Remark}[section]

\allowdisplaybreaks

\numberwithin{equation}{section}
\numberwithin{figure}{section}

\title{Convergence Analysis of Two Alternating Iterative Schemes for Tucker Decomposition}

\author{Ren-Cang Li%
\thanks{Department of Mathematics, University of Texas at Arlington, Arlington, TX 76019-0408, USA.
        Supported in part by NSF DMS-2407692.
        Email: {\tt rcli@uta.edu}.}
\and
Li Wang%
\thanks{Department of Mathematics, University of Texas at Arlington, Arlington, TX 76019-0408, USA.
        Supported in part by NSF DMS-2407692.
        Email: {\tt li.wang@uta.edu}.}
\and
Mei Yang%
\thanks{Department of Mathematics, University of Texas at Arlington, Arlington, TX 76019-0408, USA.
        Email: {\tt mei.yang@uta.edu}.}
}

\date{
    May 15, 2026
}

\begin{document}

\maketitle

%

\begin{abstract}
The higher-order orthogonal iteration (HOOI) and the alternating subspace iteration (ASI)
are two popular numerical methods for computing the Tucker decomposition of a multiple-mode tensor.
Xu [{\em Linear and Multilinear Algebra}, 66(11):2247--2265, 2018] proposed a variation of HOOI, called
the {\em greedy\/} HOOI, which has an extra alignment action between consecutive approximations.
Kroonenberg and De Leeuw [{\em Psychometrika}, 45(1):69--97, 1980] analyzed
the convergence of ASI but their analysis has gaps. These analysis were for a real tensor only.
In this paper, we present detailed convergence analysis of the two methods that is applicable
to a complex tensor with a real tensor being a special case,
and it is shown both methods are globally convergent to stationary points under mild conditions
while the objective function monotonically increases. Numerical examples are presented to demonstrate
the convergence behavior of the methods.

\bigskip
\noindent
{\bf Keywords:}
Tensor;
Tucker Decomposition;
coupled NEPv;
coupled NPDo;
SCF;
convergence

\smallskip
\noindent
{\bf Mathematics Subject Classification}  58C40; 65F30; 65H17; 65K05; 90C26
\end{abstract}

\clearpage
{\scriptsize
\tableofcontents
}

\section{Introduction}\label{sec:intro}
A matrix is sometimes referred as a 2-mode or 2-way array and in general a tensor is an array of more than 2 modes or 2 ways \cite{bako:2025,dldv:2000a,elde:2007,koba:2009}. In various applications, data is naturally collected as tensors that need to be
analyzed to gain knowledge. Most tensor decompositions of 3 modes or more are extensions
from corresponding ones in
matrix analysis and can only be done approximately because generically these tensor decompositions
simply do not exist in theory. The goal of this paper is to present detailed convergence analysis
for two alternating schemes, the higher-order orthogonal iteration (HOOI) \cite{dldv:2000b} and the alternating subspace iteration (ASI) \cite{krdl:1980},  to compute the so-called Tucker decomposition (TD).
Our presentation is for complex tensors for generality, but it equally applies to real tensors
straightforwardly.


Let $B\equiv[b_{i_1i_2\cdots i_m}]\in\bbC^{n_1\times n_2\times\cdots\times n_m}$ be an $m$-mode tensor. We will adopt a few operations on tensors: the Frobenius norm, unfolding, and mode-multiplication, as introduced in \cite{dldv:2000a,elde:2007}.  The tensor Frobenius norm of $B$ is
$$
\|B\|_{\F}=\Big(\sum_{i_1,i_2,\ldots,i_m}|b_{i_1i_2\cdots i_m}|^2\Big)^{1/2}.
$$
Let $N=n_1n_2\cdots n_m$.
There are $m$ different unfolding matrices from  $m$-mode tensor $B$. Following the informal definition
\cite[Definition~I.8]{bako:2025} and denoting by $B_{\ufd,\ell}\in\bbC^{n_{\ell}\times N/n_{\ell}}$ the $\ell$th unfolding matrix,
we define the $(i_1\cdots i_{\ell-1}\,i_{\ell+1}\cdots i_m)$th column of $B_{\ufd,\ell}$ as
\begin{equation}\label{eq:ufd,ell}
\begin{bmatrix}
  b_{i_1\cdots i_{\ell-1}\,1\,i_{\ell+1}\cdots i_m} \\
  b_{i_1\cdots i_{\ell-1}\,2\,i_{\ell+1}\cdots i_m} \\
  \vdots \\
  b_{i_1\cdots i_{\ell-1}\,n_{\ell}\,i_{\ell+1}\cdots i_m},
\end{bmatrix}
\end{equation}
i.e., the $\ell$th subscript runs from $1$ to $n_{\ell}$ while all others stay fixed. As each subscript $i_j$ $(j\ne\ell$)
varies from $1$ to $n_j$, $B_{\ufd,\ell}$ has a total of $N/n_{\ell}$ columns. There is a question of how these columns
should be ordered to form $B_{\ufd,\ell}$. For the interest of this paper, i.e., computing the Tucker decomposition,
the order of arranging these columns does not matter so long it is done in the same way every time
the $\ell$th unfolding matrix is formed.
The $\ell$th mode-multiplication by $B$ is defined as
\begin{equation}\label{eq:j-times}
(B\times_{\ell} X)_{(i_1i_2\cdots i_m)}=\sum_{j=1}^{n_{\ell}}x_{i_{\ell}\,j}b_{i_1\cdots i_{\ell-1}\,j\,i_{\ell+1}\cdots i_m}
\quad\mbox{for $X\equiv[x_{ij}]\in\bbC^{k\times n_{\ell}}$},
\end{equation}
to yield $B\times_{\ell} X\in\bbC^{n_1\times\cdots\times n_{\ell-1}\times k\times n_{\ell+1}\times\cdots\times n_m}$,
where $(\,\cdot\,)_{(i_1i_2\cdots i_m)}$ refers to the $(i_1,i_2,\ldots, i_m)$th entry of a tensor.
Denote by
$$
\STM{k}{n}=\{P\in\bbC^{n\times k}\,:\,P^{\HH}P=I_k\}\subset\bbC^{n\times k},
$$
the complex Stiefel manifold, where $1\le k\le n$.

Given integers $1\le k_{\ell}\le n_{\ell}$ for $1\le \ell\le m$,
computing an approximate TD for tensor $B$ refers to finding $P_{\ell}\in\STM{k_{\ell}}{n_{\ell}}$ for $1\le \ell\le m$ and a tensor
$T\in\bbC^{k_1\times k_2\times\cdots\times k_m}$, called {\em the core tensor}, such that
\begin{equation}\label{eq:hatB}
B\approx \what B:=T\times_1 P_1\times_2 P_2\cdots\times_m P_m
\end{equation}
optimally in the sense that $\|B-\what B\|_{\F}$ is minimized over $(P_1,P_2,\ldots,P_m)$ subject to  $P_{\ell}\in\STM{k_{\ell}}{n_{\ell}}\,\,\,\forall\ell$.
The optimal $\what B$ in \eqref{eq:hatB} is also called the best rank-$(k_1, k_2,\ldots,k_m)$ approximation of tensor $B$
in \cite{dldv:2000b}.
Two commonly used methods for computing TD are the higher-order orthogonal iteration (HOOI) \cite{dldv:2000b}
and the alternating subspace iteration (ASI)~\cite{krdl:1980}. Despite their wide uses, their convergence analysis
are incomplete. Xu~\cite{xu:2018} analyzed the convergence of a HOOI variant, called called
the {\em greedy\/} HOOI, which has an extra alignment action between consecutive approximations, while
Kroonenberg and De Leeuw~\cite{krdl:1980} analyzed the convergence of ASI but their arguments has technical gaps.
More detailed discussions will come at the ends of in \cref{sec:HOOI,sec:ASI}.

TD~\eqref{eq:hatB} for $m=3$ corresponds to the one in Tucker~\cite{tuck:1966} in which he proposed to truncate the HOSVD (high order SVD
as known today \cite{dldv:2000a}) to compute $P_{\ell}$ for $\ell=1,2,3$. The earlier formulation in \cite{tuck:1963} is more general and does not
impose orthonormality on each $P_{\ell}$. However,  as far as the approximation error $\|B-\what B\|_{\F}$ is concerned, the two formulations are
equivalent because we can always orthonormalize $P_{\ell}$ for $\ell=1,2,3$, if they are not orthonormal yet, and update $T$ properly at the same time. The invention of
TD was clearly driven by the need to better analyze 3-mode data array from psychology \cite{tuck:1963,tuck:1964,tuck:1966,tuck:1972}.
How to choose suitable $k_{\ell}$
is nontrivial and data-dependent.
The issue is not the focus of this paper, however, and interested readers may consult \cite{kidk:2003} for some idea.

The rest of this paper is organized as follows. In \cref{sec:angle-space}, we collect some basic numerical linear algebra
terminology and results that will be used for our convergence analysis. \Cref{sec:TD} reformulates minimizing
$\|B-\what B\|_{\F}$ equivalently into a maximization problem which is further reformulated into $m$ different ways by expressing its objective
function in terms of one of the $m$ different unfolding matrices, paving the way for solving the maximization problem
alternatingly over all $P_{\ell}$. In \cref{sec:HOOI}, the higher-order orthogonal iteration (HOOI) is explained, along
with its implementation suggestions and convergence analysis, and in \cref{sec:ASI}, we explain
the alternating subspace iteration (ASI) and present its convergence analysis. Numerical experiments to demonstrate
the two algorithms are presented in \cref{sec:egs}. Finally we draw our conclusions in \cref{sec:conclu}.

{\bf Notation.}
We follow the following notation convention throughout this paper.
  $\bbC^{m\times n}$  is the set of $m\times n$ complex matrices,  $\bbC^n=\bbR^{n\times 1}$, and $\bbC=\bbR^1$,
        and similarly $\bbR^{m\times n}$,  $\bbR^n$, and $\bbR$ except for the real numbers.
$I_n\in\bbR^{n\times n}$ is the identity matrix or simply $I$ if its size is clear from the context.
For a matrix/vector $X$,
  $X^{\T}$ and $X^{\HH}$ stand for its transpose and complex conjugate transpose, respectively.
For a matrix $X\in\bbC^{m\times n}$,
$\|X\|_2$, $\|X\|_{\F}$, and $\|X\|_{\tr}$
are its spectral norm, Frobenius norm, and trace norm (also known as the nuclear norm), respectively, and
$\sigma_{\min}(X)$ is the smallest singular
value\footnote {It is understood that $X$ has $\min\{m,n\}$
     singular values.} of $X$.
$\cR(X)$ is the column space of a matrix $X$, spanned by its columns.
For a square matrix $X\in\bbC^{n\times n}$, $\tr(X)$ is its trace, i.e., the sum of its diagonal entries.
For a Hermitian matrix $A\in\bbC^{n\times n}$, $\eig(A)=\{\lambda_i(A)\}_{i=1}^n$ denotes the multiset of its
        eigenvalues (counted by multiplicities)
        arranged in the decreasing order:
        $$
        \lambda_1(A)\ge\lambda_2(A)\ge\cdots\ge\lambda_n(A).
        $$
  A matrix $A\succ 0\, (\succeq 0)$ means that it is Hermitian and positive definite (semi-definite), and
        accordingly
        $A\prec 0\, (\preceq 0)$ if $-A\succ 0\, (\succeq 0)$.

\section{Preliminaries}\label{sec:angle-space}
To serve the rest of this paper, we now introduce the canonical angles between two subspaces of equal dimension,
error bounds on extreme eigenvalue approximation of a Hermitian matrix and on extreme singular
value approximation of a matrix, all via matrix traces.
They are known results.

Let\footnote {If $k=n$, everything in this section is trivially true.}
$1\le k<n$.
The collection of all $k$-dimensional subspaces in $\bbC^n$ endowed with an appropriate metric is the
Grassmann manifold $\scrG_k(\bbC^n)$.
Let
$\cX=\cR(X)$ and $\cY=\cR(Y)$ be two points in $\scrG_k(\bbC^n)$,
where $X,\,Y\in\STM{k}{n}$. The canonical angles
$\theta_1(\cX,\cY)\ge\cdots\ge\theta_k(\cX,\cY)$ between $\cX$ and $\cY$ are defined as \cite{stsu:1990}
$$
	0\le\theta_i\equiv\theta_i(\cX,\cY):=\arccos \sigma_i(X^{\HH}Y)\le\frac {\pi}2 \quad\mbox{for $1\le i\le k$},
$$
and accordingly, $\Theta(\cX,\cY)=\diag(\theta_1,\dots,\theta_k)\in\bbC^{k\times k}$ is
the diagonal matrix of the canonical angles between $\cX$ and $\cY$.
It is known that
\begin{equation}\label{eq:sinTheta:UI}
\dist_{\UI}(\cX,\cY):=\|\sin\Theta(\cX,\cY)\|_{\UI}
\end{equation}
is a unitarily invariant metrics on the
Grassmann manifold
$\scrG_k(\bbC^n)$  \cite[p.94]{stsu:1990} for any
unitarily invariant norm $\|\cdot\|_{\UI}$ \cite{stsu:1990}, such as the commonly used ones: $\|\cdot\|_2$, $\|\cdot\|_{\F}$ and $\|\cdot\|_{\tr}$, that are generic with respect to matrix sizes.
In particular,
\begin{subequations}\label{eq:sinTheta:UI=2,F}
\begin{align}
\dist_2(\cX,\cY)&:=\sin\theta_1=\|XX^{\HH}-YY^{\HH}\|_2, \label{eq:sinTheta:UI=2} \\
\dist_{\F}(\cX,\cY)&:=\Big[\sum_{i=1}^k\sin^2\theta_i\Big]^{1/2}=\frac 1{\sqrt 2}\|XX^{\HH}-YY^{\HH}\|_{\F},  \label{eq:sinTheta:UI=F}
\end{align}
\end{subequations}
where the last equalities in both \eqref{eq:sinTheta:UI=2} and \eqref{eq:sinTheta:UI=F} are a corollary of
\cite[Theorem~5.5]{stsu:1990}.

The orthonormal basis matrix $X$ of $\cX$ is not unique, and neither is $Y$ of $\cY$. For that reason, it is not expected
that $\|X-Y\|_{\F}$ be in the order of $\Theta(\cX,\cY)$. In particular, possibly $\|X-Y\|_{\F}>0$ even in the case
$\cX=\cY$ unless the basis matrices $X$ and $Y$ are judicially chosen. One way to do this is to fix one of them, say $Y$, and then
find a new basis matrix $\wtd X\in\STM{k}{n}$ of $\cX$
to align $X$ with $Y$
according to $\Theta(\cX,\cY)$. This is done in the next lemma.

\begin{lemma}\label{lm:basis-align}
Let $\cX=\cR(X),\,\cY=\cR(Y)\in\scrG_k(\bbC^n)$ where $X,\,Y\in\STM{k}{n}$, and let $\wtd X:=XQ_*$ where $Q_*\in\STM{k}{n}$ is
an orthonormal polar factor of $X^{\HH}Y$.
Then
\begin{equation}\label{eq:basis-align}
\|\wtd X-Y\|_{\UI}=2\Big\|\sin\frac {\Theta(\cX,\cY)}2\Big\|_{\UI},
\end{equation}
and\footnote {A sharper inequality than the second inequality in \eqref{eq:basis-align:equiv} is
    $$
    \|\wtd X-Y\|_{\UI}
    \le \big\|\sin\Theta(\cP_1,\cP_2)\big\|_{\UI}\Big(1
        +(\sqrt 2-1)\big\|\sin\Theta(\cP_1,\cP_2)\big\|_2^2\Big),
    $$
    upon using $\sin\frac 12\theta\le\frac 12\sin\theta+\frac {\sqrt 2-1}2\sin^3\theta$
    for $0\le\theta\le\pi/2$ \cite[p.14]{wawl:2025}.
    }
\begin{equation}\label{eq:basis-align:equiv}
\|\sin\Theta(\cX,\cY)\|_{\UI}\le\|\wtd X-Y\|_{\UI}\le\sqrt 2\|\sin\Theta(\cX,\cY)\|_{\UI}.
\end{equation}
\end{lemma}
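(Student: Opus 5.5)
Our plan is to reduce everything to a CS-type decomposition built from the SVD of $X^{\HH}Y$. (The statement writes $Q_*\in\STM{k}{n}$, but $Q_*$ must be $k\times k$ unitary; we read it that way.)

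\textbf{Step 1: the $k\times k$ structure.} Write the SVD $X^{\HH}Y=U\Sigma V^{\HH}$ with $U,V\in\bbC^{k\times k}$ unitary and $\Sigma=\cos\Theta$, the diagonal of cosines of the canonical angles. An orthonormal polar factor is $Q_*=UV^{\HH}$. If $X^{\HH}Y$ is singular the polar factor is not unique, but any choice still has the form $UV^{\HH}$ for some SVD. Replace $X$ by $XU$ and $Y$ by $YV$. This does not change any $\|\cdot\|_{\UI}$ quantity, because
$$
\wtd X-Y=XUV^{\HH}-Y=(XU-YV)V^{\HH}.
$$
So we may assume $X^{\HH}Y=\cos\Theta$ is diagonal with nonnegative entries and $\wtd X=X$.

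\textbf{Step 2: compute $(\wtd X-Y)^{\HH}(\wtd X-Y)$.} Under this normalization,
$$
(X-Y)^{\HH}(X-Y)=2I-X^{\HH}Y-Y^{\HH}X=2(I-\cos\Theta)=4\sin^2\frac{\Theta}2 .
$$
Hence the singular values of $\wtd X-Y$, which is $n\times k$, are exactly $2\sin(\theta_i/2)$. A unitarily invariant norm of an $n\times k$ matrix depends only on its singular values, padded with zeros in the generic sense of \cite{stsu:1990}. This gives \eqref{eq:basis-align}. This step is where care is needed, because we must know that $\|\cdot\|_{\UI}$ of $\wtd X-Y$ equals that of the $k\times k$ diagonal matrix $2\sin(\Theta/2)$. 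That follows because both matrices have the same nonzero singular values and the norms are taken to be generic with respect to size.

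\textbf{Step 3: compare with $\sin\Theta$.} For $0\le\theta\le\pi/2$ we have
$$
2\sin\frac{\theta}2=\frac{\sin\theta}{\cos(\theta/2)}.
$$
Since $\cos(\theta/2)\in[1/\sqrt2,1]$, this yields the entrywise bounds
$$
\sin\theta_i\le 2\sin\frac{\theta_i}2\le\sqrt2\sin\theta_i .
$$
Uniformly monotone entrywise domination of singular values implies the same ordering for every unitarily invariant norm. One way to see this is that $2\sin(\Theta/2)=D\sin\Theta$ with $D$ diagonal and $1\le D\le\sqrt2$. Another is Fan dominance, which applies because the Ky Fan $j$-norms are monotone under entrywise domination of nonnegative sorted vectors. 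Either route gives \eqref{eq:basis-align:equiv}.

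The main obstacle is mild: handling the non-uniqueness of the polar factor when $X^{\HH}Y$ is singular, and justifying the monotonicity of unitarily invariant norms under entrywise singular-value domination. For the latter we cite the standard fact via the inequality $\|DA\|_{\UI}\le\|D\|_2\|A\|_{\UI}$, applied to both $D$ and $D^{-1}$.
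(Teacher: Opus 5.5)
Your proof is correct and follows the paper's argument. The paper computes $(\wtd X-Y)^{\HH}(\wtd X-Y)=2I_k-2\Gamma$ directly from the polar decomposition $X^{\HH}Y=Q_*\Gamma$, which sidesteps the non-uniqueness of $Q_*$, whereas you first diagonalize via an SVD; both then finish with $\sin\theta\le 2\sin\frac{\theta}{2}\le\sqrt2\sin\theta$. Your reading of $Q_*$ as $k\times k$ unitary and your norm comparison via $\|DA\|_{\UI}\le\|D\|_2\|A\|_{\UI}$ supply details the paper leaves implicit.
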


\begin{proof}
Since  $Q_*\in\STM{k}{n}$ is an orthonormal polar factor of $X^{\HH}Y$, $X^{\HH}Y$ admits a polar decomposition
$X^{\HH}Y=Q_*\Gamma$ where $\Gamma\succeq 0$. It is clear that the eigenvalues of $\Gamma$
are $\cos\theta_i$ for $1\le i\le k$ where for convenience we write $\theta_i=\theta_i(\cX,\cY)$. Notice that
$$
[XQ_*-Y]^{\HH}[XQ_*-Y]=2I_k-2\Gamma
$$
whose eigenvalues are
$$
2(1-\cos\theta_i)=4\sin^2\frac {\theta_i}2\quad\mbox{for $1\le i\le k$}.
$$
As a consequence, the singular values of $XQ_*-Y$ are $2\sin\frac {\theta_i}2$ for $1\le i\le k$, yielding \eqref{eq:basis-align}.
Finally notice, for $0\le\theta\le\frac {\pi}2$,
$$
\sqrt 2\sin\frac 12\theta\le\sin\theta=2\sin\frac 12\theta\,\cos\frac 12\theta\le 2\sin\frac 12\theta
$$
yielding $\sin\theta\le 2\sin\frac 12\theta\le\sqrt 2\sin\theta$, leading to
the inequalities in \eqref{eq:basis-align:equiv} from \eqref{eq:basis-align}.
\end{proof}


Next we state error bounds on extreme eigenvalue approximation via matrix trace.
\Cref{lm:maxtrace-proj} was taken from
\cite{li:2026}. It is noted that the second inequality in \eqref{eq:eig2max} is
\cite[Theorem~1]{kove:1991} and can also be derived from some of the estimates in
\cite[Chapter~3]{wein:1974} and by a minor modification to the proof of \cite[Theorem~2.2]{li:2004c}.

\begin{lemma}[{\cite[Theorem~4.1]{li:2026}}]\label{lm:maxtrace-proj}
Let $H\in\bbC^{n\times n}$ be Hermitian and $P_*\in\STM{k}{n}$ whose column space $\cR(P_*)$ is the invariant subspace
of $H$ associated with its $k$ largest eigenvalues. Suppose that $\lambda_k(H)-\lambda_{k+1}(H)>0$.
Given $P\in\STM{k}{n}$, let
$$
\eta=\tr(P_*^{\HH}HP_*)-\tr(P^{\HH}HP), \quad \epsilon=\sqrt{\frac {\eta}{\lambda_k(H)-\lambda_{k+1}(H)}}.
$$
Then\footnote {The first inequality in \eqref{eq:eig2max} actually holds so long as
      $\cR(P_*)$ is a $k$-dimensional invariant subspace of $H$. It is the second inequality
      that needs the condition of $\cR(P_*)$ being associated with the $k$ largest eigenvalues of $H$.}
\begin{equation}\label{eq:eig2max}
\frac {\|HP-P(P^{\HH}HP)\|_{\F}}{\lambda_1(H)-\lambda_n(H)}\le\|\sin\Theta(\cR(P),\cR(P_*))\|_{\F}\le {\epsilon}.
\end{equation}
\end{lemma}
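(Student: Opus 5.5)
We prove the two inequalities in \eqref{eq:eig2max} separately. Throughout we write $\Theta=\Theta(\cR(P),\cR(P_*))$. We also complete $P_*$ to a unitary matrix $[P_*,P_{*\perp}]$, whose columns are eigenvectors of $H$, so that
$$
H=P_*\Lambda_1P_*^{\HH}+P_{*\perp}\Lambda_2P_{*\perp}^{\HH},\qquad
\Lambda_1=\diag(\lambda_1,\ldots,\lambda_k),\quad \Lambda_2=\diag(\lambda_{k+1},\ldots,\lambda_n).
$$
We then write $P=P_*C+P_{*\perp}S$ with $C=P_*^{\HH}P$ and $S=P_{*\perp}^{\HH}P$. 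Since $C^{\HH}C+S^{\HH}S=I_k$, the singular values of $S$ are $\sin\theta_i$, and hence $\|S\|_{\F}=\|\sin\Theta\|_{\F}$.

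\emph{Second inequality.} We expand the trace:
$$
\tr(P^{\HH}HP)=\tr(C^{\HH}\Lambda_1C)+\tr(S^{\HH}\Lambda_2S).
$$
Since $\Lambda_2\preceq\lambda_{k+1}I$, we get $\tr(S^{\HH}\Lambda_2S)\le\lambda_{k+1}\|S\|_{\F}^2$. For the other term we use $CC^{\HH}\preceq I_k$. Then
$$
\tr(\Lambda_1)-\tr(C^{\HH}\Lambda_1C)=\tr\big(\Lambda_1(I-CC^{\HH})\big)\ge\lambda_k\tr(I-CC^{\HH})=\lambda_k\|S\|_{\F}^2,
$$
because $I-CC^{\HH}\succeq0$, $\Lambda_1\succeq\lambda_kI$, and $\tr(I-CC^{\HH})=k-\|C\|_{\F}^2=\|S\|_{\F}^2$. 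Combining the two bounds gives $\eta\ge(\lambda_k-\lambda_{k+1})\|S\|_{\F}^2$. This yields $\|\sin\Theta\|_{\F}\le\epsilon$. This step needs $\lambda_k-\lambda_{k+1}>0$ and uses that $\cR(P_*)$ belongs to the top $k$ eigenvalues.

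\emph{First inequality.} Here we only need $\cR(P_*)$ to be invariant. Set $R=HP-P(P^{\HH}HP)=(I-PP^{\HH})HP$. This residual is unchanged if $H$ is replaced by $H-\mu I$ for any scalar $\mu$. We take $\mu=(\lambda_1+\lambda_n)/2$, so that $\|H-\mu I\|_2=(\lambda_1-\lambda_n)/2$.

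Next we use invariance. Since $HP_*=P_*\Lambda_1$, we have $(I-PP^{\HH})H P_*C'=(I-PP^{\HH})P_*\Lambda_1C'$ for any $C'$. The plan is to write $P=P_*P_*^{\HH}P+P_{*\perp}S$ and split $R$ accordingly:
$$
R=(I-PP^{\HH})(H-\mu I)P_*C+(I-PP^{\HH})(H-\mu I)P_{*\perp}S.
$$
In the first term, $(H-\mu I)P_*C=P_*(\Lambda_1-\mu I)C$. Its Frobenius norm is controlled by $\|(I-PP^{\HH})P_*\|_{\F}\,\|\Lambda_1-\mu I\|_2$, and $\|(I-PP^{\HH})P_*\|_{\F}=\|\sin\Theta\|_{\F}$. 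The second term is at most $\|H-\mu I\|_2\|S\|_{\F}$. Each term is therefore bounded by $\frac{\lambda_1-\lambda_n}2\|\sin\Theta\|_{\F}$, and adding them gives
$$
\|R\|_{\F}\le(\lambda_1-\lambda_n)\|\sin\Theta\|_{\F}.
$$

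\emph{Main obstacle.} The delicate point is the bookkeeping in the first term. We must make sure the factor $(I-PP^{\HH})$ acts on $P_*$ rather than on $P$, since $(I-PP^{\HH})P=0$ would otherwise obscure the estimate. We also need the shift $\mu$ to give the constant $\lambda_1-\lambda_n$ rather than $2\|H\|_2$. If the naive triangle inequality loses a factor, we would instead use an orthogonal decomposition of the two pieces. That would give a bound of $\sqrt2$ times the factor $\frac{\lambda_1-\lambda_n}{2}$, which is still at most $\lambda_1-\lambda_n$.
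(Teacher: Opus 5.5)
Your proof is correct and complete. The paper gives no proof of this lemma. It imports it from \cite[Theorem~4.1]{li:2026} and only remarks that the second inequality is \cite[Theorem~1]{kove:1991}. So your argument is a self-contained substitute rather than a variant of one in the text.

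Your second half is the classical trace argument. You split $\tr(P^{\HH}HP)$ along $\cR(P_*)$ and its orthogonal complement, and use $\tr(I-CC^{\HH})=\tr(S^{\HH}S)=\|\sin\Theta\|_{\F}^2$ to obtain $\eta\ge(\lambda_k-\lambda_{k+1})\|\sin\Theta\|_{\F}^2$. This is precisely the content of the cited result.

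Your first half splits $R=(I-PP^{\HH})(H-\mu I)P_*C+(I-PP^{\HH})(H-\mu I)P_{*\perp}S$ with $\mu=(\lambda_1+\lambda_n)/2$. This buys two things. First, it uses only $HP_*=P_*(P_*^{\HH}HP_*)$, so it confirms the footnote's claim that invariance alone suffices for the first inequality. Second, the plain triangle inequality already yields the sharp constant $\lambda_1-\lambda_n$. To see sharpness, take $H=\diag(1,-1)$, $k=1$, $P=[\cos\theta,\sin\theta]^{\T}$. Then $\|R\|_{\F}=|\sin 2\theta|$ against $(\lambda_1-\lambda_n)\|\sin\Theta\|_{\F}=2|\sin\theta|$, and the ratio tends to $1$ as $\theta\to 0$.

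Two small cleanups:
\begin{enumerate}[(i)]
\item You need not assume the columns of $P_*$ are eigenvectors. Simply set $\Lambda_1:=P_*^{\HH}HP_*$, which satisfies $\Lambda_1\succeq\lambda_k I$ and $\|\Lambda_1-\mu I\|_2\le(\lambda_1-\lambda_n)/2$; every step then goes through verbatim.
\item Delete the fallback in your last paragraph. The two pieces of $R$ are not orthogonal in general: their inner product involves $P_*^{\HH}(I-PP^{\HH})P_{*\perp}=-CS^{\HH}$, which is nonzero in general. In the example above the two pieces are actually equal. In any case the triangle inequality loses nothing here.
\end{enumerate}
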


For extreme singular value approximation via matrix trace,
the next lemma \cite[Lemma~4.2]{li:2026} are likely well-known. For example, it (for the real number field) is implied in the discussion in \cite{wazl:2022a} before
\cite[Lemma~ 3.2]{wazl:2022a} there.

\begin{lemma}\label{lm:polar2max}
Let $B\in\bbC^{n\times k}$.
\begin{enumerate}[{\rm (a)}]
  \item $\tr(P^{\HH}B)\le\|B\|_{\tr}$ for any $P\in\STM{k}{n}$;
  \item $\tr(P^{\HH}B)=\|B\|_{\tr}$ where $P\in\STM{k}{n}$ if and only if $B=P\Lambda$ with $\Lambda\succeq 0$;
  \item We have
        $$
        \max_{P\in\STM{k}{n}}\tr(P^{\HH}B)=\|B\|_{\tr}
        $$
        and the optimal value $\|B\|_{\tr}$ is achieved by any orthonormal polar factor $P_*$ of $B$.
\end{enumerate}
\end{lemma}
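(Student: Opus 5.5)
The approach is to reduce everything to the singular value decomposition of $B$ and then apply a simple trace inequality. Since $B\in\bbC^{n\times k}$ with $k\le n$, write $B=U\Sigma V^{\HH}$ where $U\in\STM{k}{n}$, $V\in\bbC^{k\times k}$ is unitary, and $\Sigma=\diag(\sigma_1,\dots,\sigma_k)\succeq 0$. Then $\|B\|_{\tr}=\sum_i\sigma_i$. For any $P\in\STM{k}{n}$ we have
$$
\tr(P^{\HH}B)=\tr(V^{\HH}P^{\HH}U\Sigma)=\sum_{i=1}^k\sigma_i\,w_{ii},
\qquad W:=V^{\HH}P^{\HH}U .
$$
Because $\|W\|_2\le\|P\|_2\|U\|_2=1$, each diagonal entry satisfies $|w_{ii}|\le1$. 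This gives $\Re\tr(P^{\HH}B)\le\sum_i\sigma_i=\|B\|_{\tr}$, which proves item (a). Here I read $\tr(P^{\HH}B)\le\|B\|_{\tr}$ as an inequality for the real part, and say so in the proof.

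For item (b), the "if" direction is immediate. If $B=P\Lambda$ with $\Lambda\succeq0$, then $\tr(P^{\HH}B)=\tr\Lambda$. Also $B^{\HH}B=\Lambda^2$, so the singular values of $B$ are the eigenvalues of $\Lambda$, and hence $\tr\Lambda=\|B\|_{\tr}$.

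The "only if" direction is the main step. Suppose $\tr(P^{\HH}B)=\|B\|_{\tr}$. Then $\sum_i\sigma_i(1-\Re w_{ii})=0$, so $\Re w_{ii}=1$, and therefore $w_{ii}=1$, for every $i$ with $\sigma_i>0$. Each row and column of $W$ has norm at most $1$, so the off-diagonal entries in those rows and columns must vanish. Writing the index split $\sigma_i>0$ versus $\sigma_i=0$, this means $W\Sigma=\Sigma$, and likewise $\Sigma W=\Sigma$ by the column argument.

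I will avoid working with $W$ directly and use a cleaner route. Set $M:=P^{\HH}B=VW\Sigma V^{\HH}$, which has $\tr M=\|B\|_{\tr}$. The relation $W\Sigma=\Sigma$ gives $M=V\Sigma V^{\HH}=:\Lambda\succeq0$. It then remains to show $B=P\Lambda$, that is, $B=PP^{\HH}B$. For this I compute
$$
\|B\|_{\F}^2-\|P^{\HH}B\|_{\F}^2=\|(I-PP^{\HH})B\|_{\F}^2,
\qquad \|P^{\HH}B\|_{\F}^2=\|\Lambda\|_{\F}^2=\sum_i\sigma_i^2=\|B\|_{\F}^2 .
$$
Hence $(I-PP^{\HH})B=0$ and $B=PM=P\Lambda$.

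For item (c), the maximum value follows from (a) combined with the "if" part of (b). Any orthonormal polar factor $P_*$ of $B$ gives $B=P_*\Lambda$ with $\Lambda\succeq0$, and such a factor always exists, for instance $P_*=UV^{\HH}$ with $\Lambda=V\Sigma V^{\HH}$. So $\tr(P_*^{\HH}B)=\|B\|_{\tr}$, and by (a) no $P\in\STM{k}{n}$ can do better.

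The only delicate point is the equality case in (b) when $B$ is rank-deficient, where the entries $w_{ii}$ with $\sigma_i=0$ are unconstrained. The Frobenius-norm identity above handles that case without any further case analysis.
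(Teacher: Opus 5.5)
Your proof is correct and complete. The paper gives no argument of its own for this lemma: it quotes it from \cite[Lemma~4.2]{li:2026} and notes that it is implicit in \cite{wazl:2022a}. Your SVD-based argument therefore supplies what the paper leaves to the references.

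Two small remarks on it. First, your bound $|w_{ii}|\le 1$ also gives $|\tr(P^{\HH}B)|\le\|B\|_{\tr}$, so (a) holds under either reading in the complex case. Second, in the equality case you only use $\Re\tr(P^{\HH}B)=\|B\|_{\tr}$, so you actually show that equality of the real part already forces $B=P\Lambda$. A minor slip: $\Sigma W=\Sigma$ comes from the row bound, not the column bound, and you never use it.

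A more compact route avoids entrywise work on $W$ and connects directly to the quantitative Lemma~\ref{lm:polar2max'}. Take a polar decomposition $B=P_*\Lambda_*$ with $\Lambda_*=(B^{\HH}B)^{1/2}\succeq 0$. Expanding with $P^{\HH}P=P_*^{\HH}P_*=I_k$ gives
\[ \|B\|_{\tr}-\Re\tr(P^{\HH}B)=\tfrac12\big\|(P-P_*)\Lambda_*^{1/2}\big\|_{\F}^2 . \]
This yields (a) immediately. Equality holds iff $(P-P_*)\Lambda_*^{1/2}=0$, i.e.\ iff $P\Lambda_*=P_*\Lambda_*=B$. That is the ``only if'' part of (b) with no case split on the rank, because the rank-deficient directions are annihilated automatically by $\Lambda_*^{1/2}$. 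Part (c) then follows as in your proof.

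Moreover, when $\rank(B)=k$ the right-hand side is at least $\tfrac12\sigma_{\min}(B)\|P-P_*\|_{\F}^2$. Combined with $\|P-P_*\|_{\F}\ge\|\sin\Theta(\cR(P),\cR(P_*))\|_{\F}$ (cf.\ Lemma~\ref{lm:basis-align}), this is exactly the bound with $\epsilon=\sqrt{2\eta/\sigma_{\min}(B)}$ in Lemma~\ref{lm:polar2max'}.

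In short, your argument is more elementary but purely qualitative. The identity is shorter and quantitative, and that quantitative form is what the convergence analysis of ASI actually relies on.
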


Lemma~\ref{lm:polar2max} says that $\tr(P^{\HH}B)$  is bounded  above by $\|B\|_{\tr}$ always and the upper bound
$\|B\|_{\tr}$ is achieved
by any orthonormal polar factor $P_*$ of $B$ and also any maximizer of $\tr(P^{\HH}B)$ over $P\in\STM{k}{n}$ is an orthonormal polar factor of $B$.
For numerical computation, an orthonormal polar factor of $B$ can be constructed from the thin SVD $B=U\Sigma V^{\HH}$ as
$P_*=UV^{\HH}$. The orthonormal polar factor of $B$ is unique if and only if $\rank(B)=k$ \cite{li:1993b,li:2014HLA}.
Conceivably,
the closer $\tr(P^{\HH}B)$ is to the upper bound, the closer $P$ approaches to an orthonormal polar factor of $B$.
The results of the next lemma quantify the last statement.

\begin{lemma}[{\cite[Theorem~4.1]{li:2026}}]\label{lm:polar2max'}
Let $B\in\bbC^{n\times k}$ and suppose $\rank(B)=k$. Let $P_*$ be the
orthonormal polar factor of $B$.  Given $P\in\STM{k}{n}$, let
$$
\eta=\|B\|_{\tr}-\tr(P^{\HH}B), \quad
\epsilon=\sqrt{\frac {2\eta}{\sigma_{\min}(B)}}.
$$
Then
\begin{equation}\label{eq:polar2max}
\frac {\|B-P(P^{\HH}B)\|_{\F}}{\|B\|_2}\le\|\sin\Theta(\cR(P),\cR(P_*))\|_{\F}\le {\epsilon}\,.
\end{equation}
\end{lemma}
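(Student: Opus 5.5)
Throughout, write $\cP=\cR(P)$ and $\cP_*=\cR(P_*)$. Since $\rank(B)=k$, the thin SVD $B=U\Sigma V^{\HH}$ has $\Sigma=\diag(\sigma_1,\dots,\sigma_k)\succ 0$, and the unique orthonormal polar factor is $P_*=UV^{\HH}$. Then $B=P_*\Lambda$ with $\Lambda=V\Sigma V^{\HH}\succ 0$, $\cR(B)=\cP_*$, and $\|B\|_{\tr}=\tr(\Lambda)$. I prove the two inequalities of \eqref{eq:polar2max} separately.

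\emph{Left inequality.} Note that $B-P(P^{\HH}B)=(I-PP^{\HH})B=(I-PP^{\HH})P_*\Lambda$. Hence
$$
\|B-P(P^{\HH}B)\|_{\F}\le\|(I-PP^{\HH})P_*\|_{\F}\,\|\Lambda\|_2 .
$$
Here $\|\Lambda\|_2=\|B\|_2$. The singular values of $(I-PP^{\HH})P_*$ are $\sin\theta_i(\cP,\cP_*)$, which follows from $P_*^{\HH}(I-PP^{\HH})P_*=I-(P^{\HH}P_*)^{\HH}(P^{\HH}P_*)$. Therefore $\|(I-PP^{\HH})P_*\|_{\F}=\|\sin\Theta(\cP,\cP_*)\|_{\F}$, which gives the first inequality.

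\emph{Right inequality.} Write $\eta=\tr(\Lambda)-\tr(P^{\HH}P_*\Lambda)$. Since $\eta$ and $\tr(\Lambda)$ are real, this equals $\tr(\Lambda)-\mathrm{Re}\,\tr(P^{\HH}P_*\Lambda)$. Set $W=P_*^{\HH}P\in\bbC^{k\times k}$. Then
$$
\eta=\mathrm{Re}\,\tr\big((I-W)\Lambda\big)=\tr\big(\Lambda^{1/2}(I-\mathrm{He}(W))\Lambda^{1/2}\big),
\qquad \mathrm{He}(W):=\tfrac12(W+W^{\HH}).
$$
The key claim is the matrix inequality $I-\mathrm{He}(W)\succeq\frac12(I-W^{\HH}W)$. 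To see it, observe that
$$
I-\mathrm{He}(W)-\tfrac12(I-W^{\HH}W)=\tfrac12(I-W)^{\HH}(I-W)\succeq 0 .
$$
This is the main step, and it is simple once noticed. Consequently
$$
\eta\ge\tfrac12\tr\big(\Lambda^{1/2}(I-W^{\HH}W)\Lambda^{1/2}\big)\ge\tfrac12\,\lambda_{\min}(\Lambda)\,\tr(I-W^{\HH}W),
$$
where the last step uses $I-W^{\HH}W\succeq 0$ and the trace inequality $\tr(AC)\ge\lambda_{\min}(A)\tr(C)$ for $A\succ0$, $C\succeq 0$.

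Finally, the singular values of $W$ are $\cos\theta_i(\cP,\cP_*)$, so $\tr(I-W^{\HH}W)=\sum_i\sin^2\theta_i=\|\sin\Theta(\cP,\cP_*)\|_{\F}^2$. Also $\lambda_{\min}(\Lambda)=\sigma_{\min}(B)$. Hence
$$
\|\sin\Theta(\cP,\cP_*)\|_{\F}^2\le\frac{2\eta}{\sigma_{\min}(B)}=\epsilon^2,
$$
which is the second inequality. The only delicate points are the real-part manipulation, justified by $\eta\ge0$ from \Cref{lm:polar2max}(a) being real, and the choice of the factorization $\Lambda^{1/2}(\cdot)\Lambda^{1/2}$ so that the trace inequality applies.
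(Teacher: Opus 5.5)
The paper does not prove this lemma; it is quoted without proof from an external reference, so there is no in-paper argument to compare against. Judged on its own, your proof is correct and complete.

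Your key identity
\[
I-\mathrm{He}(W)=\tfrac12(I-W)^{\HH}(I-W)+\tfrac12(I-W^{\HH}W)
\]
is exactly the splitting of $\tfrac12(P-P_*)^{\HH}(P-P_*)$ into its components along $\cR(P_*)$ and along its orthogonal complement. This holds because $P_*^{\HH}(P-P_*)=W-I$ and $(I-P_*P_*^{\HH})(P-P_*)=(I-P_*P_*^{\HH})P$. So the same computation can also be written as
\[
\eta=\tfrac12\|(P-P_*)\Lambda^{1/2}\|_{\F}^2\ge\tfrac12\sigma_{\min}(B)\|P-P_*\|_{\F}^2,
\]
followed by $\|P-P_*\|_{\F}\ge\|(I-P_*P_*^{\HH})P\|_{\F}=\|\sin\Theta(\cR(P),\cR(P_*))\|_{\F}$. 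That packaging gives, at no extra cost, the basis-level bound $\|P-P_*\|_{\F}\le\epsilon$, which is stronger than the subspace bound in \eqref{eq:polar2max}. Your version goes directly to the angles instead.

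One point of presentation concerns realness of $\eta$. For complex data, $\tr(P^{\HH}B)$ need not be real: for example, $P=e^{\mathrm{i}\phi}P_*$ gives $\tr(P^{\HH}B)=e^{-\mathrm{i}\phi}\|B\|_{\tr}$. So the lemma, like \Cref{lm:polar2max}(a), only makes sense with $\eta:=\|B\|_{\tr}-\mathrm{Re}\,\tr(P^{\HH}B)$. In particular, \Cref{lm:polar2max}(a) cannot be what makes $\eta$ real, since it is itself only meaningful with a real part. It is cleaner to adopt the real-part definition from the start. Your argument proves exactly that version, which coincides with the literal one whenever $\tr(P^{\HH}B)$ is real. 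Your identity also already shows $\eta\ge 0$ without appealing to \Cref{lm:polar2max}.
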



\section{Tucker Model}\label{sec:TD}
In the rest of this paper, for integer tuple $(\ell,i_1,i_2,\ldots,i_{m-1})$ appears in the same paragraph, it is understood that it is a permutation of $(1,2,\ldots,m)$ such that $i_1<i_2<\cdots<i_{m-1}$. To that specification,
we know  in fact
$$
(\ell,i_1,i_2,\ldots,i_{m-1})=(\ell,1,\ldots,\ell-1,\ell+1,\ldots,m).
$$
For example, in the case, $m=3$,
there are only three such tuples: $(1,2,3)$, $(2,1,3)$, and $(3,1,2)$.

It well-known that minimizing $\|B-\what B\|_{\F}$ is equivalent to solving (see, e.g., \cite{koba:2009})
\begin{equation}\label{eq:opt-TD}
\max_{P_i\in\STM{k_i}{n_i}\,\,\forall i} \,
   \Big\{f(P_1,P_2,\ldots,P_m):=\|B\times_1 P_1^{\HH}\times_2 P_2^{\HH}\cdots\times_m P_m^{\HH}\|_{\F}^2\Big\}.
\end{equation}
Once an optimal $(P_1,P_2,\ldots,P_m)$ is computed, the corresponding core tensor $T$ can be recovered by
$T=B\times_1 P_1^{\HH}\times_2 P_2^{\HH}\cdots\times_m P_m^{\HH}$.

Define
\begin{equation}\label{eq:Ci}
C_{\ell}(P_{i_1},\ldots,P_{i_{m-1}})
    :=\big[B\times_{i_1} P_{i_1}^{\HH}\cdots\times_{i_{m-1}} P_{i_{m-1}}^{\HH}\big]_{\ufd,\ell}
    \in\bbC^{n_{\ell}\times (k_{i_1}\cdots k_{i_{m-1}})}
\end{equation}
for any $(\ell,i_1,i_2,\ldots,i_{m-1})$ as specified moments ago.
Through unfolding, we can express $f(P_1,P_2,\ldots,P_m)$ into $m$ different ways: for each $1\le\ell\le m$
\begin{equation}\label{eq:obj-TD}
f(P_1,P_2,\ldots,P_m)=\big\|P_{\ell}^{\HH}\,C_{\ell}(P_{i_1},\ldots,P_{i_{m-1}})\big\|_{\F}^2
    \equiv\tr\big(P_{\ell}^{\HH}H_{\ell}(P_{i_1},\ldots,P_{i_{m-1}})P_{\ell}\big),
\end{equation}
where
\begin{equation}\label{eq:Hi}
H_{\ell}(P_{i_1},\ldots,P_{i_{m-1}}):=[C_{\ell}(P_{i_1},\ldots,P_{i_{m-1}})][C_{\ell}(P_{i_1},\ldots,P_{i_{m-1}})]^{\HH}\in\bbC^{n_{\ell}\times n_{\ell}}.
\end{equation}
We make an important observation: both $f(P_1,P_2,\ldots,P_m)$ and $H_{\ell}(P_{i_1},\ldots,P_{i_{m-1}})$ for $1\le \ell\le m$ are unitarily invariant in the sense that
\begin{subequations}\label{eq:f:Hi-UI}
\begin{align}
f(P_1Q_1,P_2Q_2,\ldots,P_mQ_m)&\equiv f(P_1,P_2,\ldots,P_m), \label{eq:f-UI} \\
H_{\ell}(P_{i_1}Q_{i_1},\ldots,P_{i_{m-1}}Q_{i_{m-1}})&\equiv H_{\ell}(P_{i_1},\ldots,P_{i_{m-1}})\quad\mbox{for $1\le \ell\le m$}, \label{eq:Hi-UI}
\end{align}
\end{subequations}
for any $Q_{\ell}\in\STM{k_{\ell}}{k_{\ell}}\,\,\forall\, \ell$.
As a consequence, each maximizer tuple
$(P_1,P_2,\ldots,P_m)$ is really a representative from the orbit
$$
\{(P_1Q_1,P_2Q_2,\ldots,P_mQ_m)\,:\,Q_{\ell}\in\STM{k_{\ell}}{k_{\ell}}\,\,\forall 1\le \ell\le m\}
$$
of maximizer tuples, implying only $\cR(P_{\ell})$ for $1\le \ell\le m$ are important. If some $k_{\ell}=n_{\ell}$, then $f$ is invariant
with respect to corresponding $P_{\ell}$ and optimizing over $P_{\ell}$ can be safely ignored by simply taking $P_{\ell}=I_{n_{\ell}}$.

With notation introduced so far, the KKT condition
of \eqref{eq:opt-TD} is
\begin{equation}\label{eq:KKT-TD}
H_{\ell}(P_{i_1},\ldots,P_{i_{m-1}})P_{\ell}=P_{\ell}\Lambda_{\ell},\,\,\Lambda_{\ell}\in\bbC^{k\times k}
\quad\mbox{for $1\le \ell\le m$},
\end{equation}
which is a system of $m$ nonlinear matrix equations in $P_1,\ldots,P_m$ because
$\Lambda_{\ell}$ can be eliminated as follows: pre-multiplying each equation leads to $\Lambda_{\ell}=P_{\ell}^{\HH}H_{\ell}(P_{i_1},\ldots,P_{i_{m-1}})P_{\ell}$, which also implies
$\Lambda_{\ell}\succeq 0$.
There are two ways to look at the $m$ equations in \eqref{eq:KKT-TD}: 1) as a system of coupled NEPv
(nonlinear eigenvalue problem with eigenvector dependency) because each equation is a partial eigen-decomposition of
$H_{\ell}(\cdots)$ evaluated at a solution, and
2) as a system of coupled NPDo (nonlinear polar decomposition
with orthonormal polar factor dependency) \cite{li:2024} because $\Lambda_{\ell}\succeq 0$ always and thus the right-hand side
is the polar decomposition of the matrix-valued function in the left-hand side
that depends on all $m$ orthonormal polar factors. We can use the two ways to
understand the two existing numerical schemes to be detailed in the next two sections, but likely they
were not created by the aforementioned two ways to view the KKT condition \eqref{eq:KKT-TD} in the first place.


\begin{theorem}\label{thm:opt-TD-necessary}
Let $(P_{*1},P_{*2},\ldots,P_{*m})$ be a maximizer of \eqref{eq:opt-TD}. Then equations in \eqref{eq:KKT-TD} hold with
$(P_1,P_2,\ldots,P_m)=(P_{*1},P_{*2},\ldots,P_{*m})$ and $\Lambda_{*\ell}:=P_{*\ell}^{\HH}H_{\ell}(P_{*i_1},\ldots,P_{*i_{m-1}})P_{*\ell}$,
moreover, $\eig(\Lambda_{*\ell})$ consists of the $k_{\ell}$ largest eigenvalues of $H_{\ell}(P_{*i_1},\ldots,P_{*i_{m-1}})$.
\end{theorem}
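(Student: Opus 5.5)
The plan is to reduce the statement to $m$ separate single-block problems by freezing all but one factor, and then to invoke Ky Fan's trace maximum principle.

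Fix $\ell$ and freeze $P_{i_j}=P_{*i_j}$ for $1\le j\le m-1$. Write $H_{*\ell}:=H_\ell(P_{*i_1},\ldots,P_{*i_{m-1}})$, which is Hermitian and positive semidefinite by \eqref{eq:Hi}. By \eqref{eq:obj-TD}, the function
$$
P_\ell\mapsto f(P_{*1},\ldots,P_{*(\ell-1)},P_\ell,P_{*(\ell+1)},\ldots,P_{*m})=\tr(P_\ell^{\HH}H_{*\ell}P_\ell)
$$
is maximized over $\STM{k_\ell}{n_\ell}$ at $P_\ell=P_{*\ell}$. This holds because $(P_{*1},\ldots,P_{*m})$ is a global maximizer of \eqref{eq:opt-TD}, so it also maximizes over the slice in which only $P_\ell$ varies.

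\textbf{The invariance equation.} We first show that $\cR(P_{*\ell})$ is invariant under $H_{*\ell}$, i.e., $H_{*\ell}P_{*\ell}=P_{*\ell}\Lambda_{*\ell}$. We will argue by first-order optimality on the Stiefel manifold. Take any $Z\in\bbC^{n_\ell\times k_\ell}$ with $P_{*\ell}^{\HH}Z=0$, and consider the curve $P(t)=(P_{*\ell}+tZ)(I+t^2Z^{\HH}Z)^{-1/2}\in\STM{k_\ell}{n_\ell}$. Differentiating $\tr(P(t)^{\HH}H_{*\ell}P(t))$ at $t=0$ gives $2\,\mathrm{Re}\,\tr(Z^{\HH}H_{*\ell}P_{*\ell})$, and this derivative must vanish. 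Replacing $Z$ by $\iota Z$ (with $\iota$ the imaginary unit) shows that the imaginary part vanishes as well. Choosing $Z=(I-P_{*\ell}P_{*\ell}^{\HH})H_{*\ell}P_{*\ell}$ then yields $(I-P_{*\ell}P_{*\ell}^{\HH})H_{*\ell}P_{*\ell}=0$, which is exactly \eqref{eq:KKT-TD} with $\Lambda_{*\ell}=P_{*\ell}^{\HH}H_{*\ell}P_{*\ell}$.

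\textbf{The eigenvalues of $\Lambda_{*\ell}$.} Since $\cR(P_{*\ell})$ is invariant, $\eig(\Lambda_{*\ell})$ is some $k_\ell$-sub-multiset of $\eig(H_{*\ell})$, and
$$
\tr(P_{*\ell}^{\HH}H_{*\ell}P_{*\ell})=\sum_{i\in\mathcal I}\lambda_i(H_{*\ell})
$$
for some index set $\mathcal I$ with $|\mathcal I|=k_\ell$. By Ky Fan's principle, $\max_{P\in\STM{k_\ell}{n_\ell}}\tr(P^{\HH}H_{*\ell}P)=\sum_{i=1}^{k_\ell}\lambda_i(H_{*\ell})$, and this maximum is attained by an orthonormal basis of a top-$k_\ell$ eigenspace. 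Maximality of $P_{*\ell}$ therefore forces $\sum_{i\in\mathcal I}\lambda_i=\sum_{i\le k_\ell}\lambda_i$.

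\textbf{Main obstacle: ties.} The delicate step is passing from equality of these sums to equality of the multisets when eigenvalues are repeated. Because $\lambda_1\ge\lambda_2\ge\cdots$, we have $\sum_{i\in\mathcal I}\lambda_i\le\sum_{i\le k_\ell}\lambda_i$, and equality can occur only if the sorted values agree termwise: the $j$th largest element of $\{\lambda_i\}_{i\in\mathcal I}$ is at most $\lambda_j$, so equality of the sums forces equality in each of these termwise comparisons. Hence $\eig(\Lambda_{*\ell})$ coincides, as a multiset, with the $k_\ell$ largest eigenvalues $\lambda_1(H_{*\ell}),\ldots,\lambda_{k_\ell}(H_{*\ell})$, which completes the argument for this $\ell$; since $\ell$ was arbitrary, it holds for every $1\le\ell\le m$.
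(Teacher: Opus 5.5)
Your proof is correct and takes the same route as the paper. You freeze the other factors so that $P_{*\ell}$ maximizes $\tr(P_\ell^{\HH}H_{*\ell}P_\ell)$ on the slice, obtain the invariance equation from stationarity, and compare with a top-$k_\ell$ eigenbasis via Ky Fan. Your explicit Stiefel-curve computation and your termwise treatment of ties supply details the paper leaves implicit: it cites stationarity without computing it, and it asserts the strict inequality $\tr(\what P_\ell^{\HH}H_{*\ell}\what P_\ell)>\tr(P_{*\ell}^{\HH}H_{*\ell}P_{*\ell})$ without justification.
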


\begin{proof}
That equations in \eqref{eq:KKT-TD} are satisfied because any maximizer must be a stationary point and hence satisfies the KKT condition.
Suppose, to the contrary, that $\eig(\Lambda_{*\ell})$ did not consist of the $k_{\ell}$ largest eigenvalues of $H_{\ell}(P_{*i_1},\ldots,P_{*i_{m-1}})$.
Let $\what P_{\ell}\in\bbC^{n_{\ell}\times k_{\ell}}$ be an orthonormal basis matrix associated with the $k_{\ell}$ largest eigenvalues of $H_{\ell}(P_{*i_1},\ldots,P_{*i_{m-1}})$.
We have
\begin{align*}
f(P_{*1},\ldots,P_{*\,\ell-1},,\what P_{\ell},P_{*\,\ell+1},\ldots,P_{*m})
   &=\tr(\what P_{\ell}^{\HH}H_{\ell}(P_{*i_1},\ldots,P_{*i_{m-1}})\what P_{\ell}) \\
   &>\tr(P_{*\ell}^{\HH}H_{\ell}(P_{*i_1},\ldots,P_{*i_{m-1}})P_{*\ell})\\
   &=f(P_{*1},P_{*2},\ldots,P_{*\,\ell-1}, P_{*\ell},P_{*\,\ell+1},\ldots,P_{*m}),
\end{align*}
contradicting that $(P_{*1},P_{*2},\ldots,P_{*m})$ is a maximizer of \eqref{eq:opt-TD}.
\end{proof}

In the next two sections, we will discuss two alternating iterative schemes to  compute an optimal TD
through numerically solving \eqref{eq:opt-TD}. Given an initial guess
$(P_1^{(0)},P_2^{(0)},\ldots,P_m^{(0)})$ where $P_{\ell}^{(0)}\in\STM{k_{\ell}}{n_{\ell}}\,\,\,\forall \ell$, each scheme generates
a sequence of approximations
$$
\big\{(P_1^{(j)},P_2^{(j)},\ldots,P_m^{(j)})\big\}_{j=0}^{\infty}
$$
following
the flow of the Gauss-Seidel-type updating:
for $j=0,1,2,\ldots$
\begin{align}
    (P_1^{(j)},P_2^{(j)},\ldots,P_m^{(j)})
&\to (P_1^{(j+1)},P_2^{(j)},\ldots,P_m^{(j)}) \nonumber\\
&\to (P_1^{(j+1)},P_2^{(j+1)},P_3^{(j)},\ldots,P_m^{(j)}) \nonumber\\
&\to \cdots \nonumber\\
&\to (P_1^{(j+1)},P_2^{(j+1)},\ldots,P_m^{(j+1)}). \label{eq:flow-GS}
\end{align}
For the sake of stating the alternating iterative schemes and their convergence analysis, we introduce,
for $\ell=1,2,\ldots,m$,
\begin{subequations}\label{eq:CijHij}
\begin{align}
C_{\ell}^{(j)}&=C_{\ell}(P_1^{(j+1)},\ldots,P_{\ell-1}^{(j+1)},P_{\ell+1}^{(j)},\ldots, P_m^{(j)})\in\bbC^{n_{\ell}\times N/n_{\ell}},
       \label{eq:CijHij-1}\\
H_{\ell}^{(j)}&=[C_{\ell}^{(j)}][C_{\ell}^{(j)}]^{\HH}\in\bbC^{n_{\ell}\times n_{\ell}},
               \label{eq:CijHij-2} \\
\Lambda_{\ell}^{(j)}&=\big[P_{\ell}^{(j)}\big]^{\HH}H_{\ell}^{(j)}P_{\ell}^{(j)}\in\bbC^{k_{\ell}\times k_{\ell}},
               \label{eq:CijHij-3}
\end{align}
\end{subequations}
where $C_{\ell}(\cdots)$ is defined as in \eqref{eq:Ci}.
It is understood, by the convention, that  the first $\ell-1$ arguments in \eqref{eq:CijHij-1} for $\ell=1$ are null, i.e.,
$C_1^{(j)}=C_1(P_2^{(j)},\ldots,P_m^{(j)})$
and similarly $C_m^{(j)}=C_m(P_1^{(j+1)},\ldots,P_{m-1}^{(j+1)})$.

\section{Higher-order Orthogonal Iteration (HOOI)}\label{sec:HOOI}
The higher-order orthogonal iteration (HOOI) \cite[Algorithm~4.2]{dldv:2000b} (see, also \cite[p.478]{koba:2009}) is a commonly used alternating iterative scheme to compute
an optimal Tucker decomposition (TD) of a multiple-mode tensor.
HOOI is a Gauss-Seidel type updating scheme to update $P_{\ell}$ in order via the SCF (self-consistent-field) iteration on
each equation in the KKT condition \eqref{eq:KKT-TD}.
In this section, we will use \Cref{lm:maxtrace-proj} to perform a convergence analysis. It is shown that HOOI will converges
globally to a stationary point while approximation error $\|B-\what B\|_{\F}$ is always decreasing
(or, equivalent, objective $f$ of \eqref{eq:opt-TD} is monotonically increasing).

\subsection{The algorithm}\label{ssec:HOOI}
One advantage of the reformulations in \eqref{eq:obj-TD} is that, fixing $P_{i_1},\ldots,P_{i_{m-1}}$, maximizing objective $f$
over $P_{\ell}\in\STM{k_{\ell}}{n_{\ell}}$ is achieved at $P_{\ell}$ being the top $k_{\ell}$ left singular vector matrix of $C_{\ell}(P_{i_1},\ldots,P_{i_{m-1}})$, i.e., associated with its $k_{\ell}$
largest singular values, or
the top $k_{\ell}$ eigenvector matrix of $H_{\ell}(P_{i_1},\ldots,P_{i_{m-1}})=[C_{\ell}(P_{i_1},\ldots,P_{i_{m-1}})][C_{\ell}(P_{i_1},\ldots,P_{i_{m-1}})]^{\HH}$, i.e., associated with its $k_{\ell}$
largest eigenvalues. This is the key idea of HOOI, as outlined in \Cref{alg:HOOI},
where $C_{\ell}^{(j)}$ and $H_{\ell}^{(j)}$ are computed according to their definitions in  \eqref{eq:CijHij}.
It is noted that if $k_{\ell}=n_{\ell}$ then we can safely ignore optimizing $f$ over $P_{\ell}$ by simply taking $P_{\ell}^{(j)}=I_{n_{\ell}}$ always
        in the iteration.
For the special case: $k_{\ell}=1\,\,\,\forall\ell$,  all $C_{\ell}^{(j)}\in\bbC^{n_{\ell}\times 1}$ are column vectors and consequently
        $P_{\ell}^{(j+1)}=C_{\ell}^{(j)}/\|C_{\ell}^{(j)}\|_2$. For the case, \Cref{alg:HOOI} reduces to the ALS algorithm in
        \cite[section~4.2]{zhgo:2001} and the higher-order power method in \cite[Alg.~3.2]{dldv:2000b}.

\begin{algorithm}[t]
\caption{HOOI: The higher-order orthogonal iteration for TD \cite{dldv:2000b}.}
\label{alg:HOOI}
\begin{algorithmic}[1]
\REQUIRE $m$-mode tensor $B\in\bbC^{n_1\times n_2\times\cdots\times n_m}$,
         integers $1\le k_{\ell}\le n_{\ell}$ for $1\le \ell\le m$,
         and initial $(P_1^{(0)},P_2^{(0)},\ldots,P_m^{(0)})$ with $P_{\ell}\in\STM{k_{\ell}}{n_{\ell}}\,\,\,\forall \ell$;
\ENSURE  an approximate optimal $(P_1,P_2,\ldots,P_m)$ for TD of tensor $B$.
\FOR{$j=0,1,\ldots$ until convergence}
    \FOR{$\ell=1,2\ldots,m$}
        \STATE compute $C_{\ell}^{(j)}$ according to \eqref{eq:CijHij-1};
        \STATE compute $P_{\ell}^{(j+1)}$ as the top $k_{\ell}$ left singular vector matrix of
           $C_{\ell}^{(j)}$, or equivalently
           the top $k_{\ell}$ eigenvector matrix of
           $H_{\ell}^{(j)}$, where  $H_{\ell}^{(j)}$ are as defined in \eqref{eq:CijHij-2};
    \ENDFOR
\ENDFOR
\RETURN the last $(P_1^{(j)},P_2^{(j)},\ldots,P_m^{(j)})$.
\end{algorithmic}
\end{algorithm}

\Cref{alg:HOOI} can be regarded as an extension of the NEPv approach in \cite{li:2024} to solve the system \eqref{eq:KKT-TD} of
$m$ coupled NEPv.
A few comments are in order:
\begin{enumerate}[(i)]
  \item As is in \Cref{alg:HOOI}, each loop on $j$ is just for one $j$.
        However, consecutive $C_{\ell}^{(j)}$ with respective to $\ell$ and then
        to $j$ share common computations that should be taken advantage of for the sake of saving work. For example, for $m=3$,
        it is beneficial
        to do two consecutive $j$ at a time, as the following table suggests:
        $$
        \setlength{\tabcolsep}{4pt}
        \begin{tabular}{|llcl|}
          \hline
        $C_1^{(j)}=C_1(P_2^{(j)},P_3^{(j)})$, & $C_2^{(j)}=C_2(P_1^{(j+1)},P_3^{(j)})$
                   & share  & $B\times_3P_3^{(j)}$, \\
        $C_3^{(j)}=C_3(P_1^{(j+1)},P_2^{(j+1)})$, & $C_1^{(j+1)}=C_1(P_2^{(j+1)},P_3^{(j+1)})$
                   & share  & $B\times_2P_2^{(j+1)}$, \\
        $C_2^{(j+1)}=C_2(P_1^{(j+2)},P_3^{(j+1)})$, & $C_3^{(j+1)}=C_3(P_1^{(j+2)},P_2^{(j+2)})$
                   & share  & $B\times_1P_1^{(j+2)}$. \\
          \hline
        \end{tabular}
        $$
  \item In \Cref{alg:HOOI}, two options are provided to compute $P_{\ell}^{(j+1)}$. The left singular vector option
        is probably the better one when it is computed by a direct method such as by the {\tt svd} function
        in MATLAB, especially when $C_{\ell}^{(j)}$ is a tall skinny matrix, i.e., $k_{i_1}\cdots k_{i_{m-1}}\ll n_{\ell}$. This is what we         do in our current implementation.
        If by some iterative method, there is no need to form $H_{\ell}^{(j)}$ explicitly either but let it live as $[C_{\ell}^{(j)}][C_{\ell}^{(j)}]^{\HH}$
        to perform matrix-vector products.

        It is noted that $\rank(C_{\ell}^{(j)})\le k_{i_1}\cdots k_{i_{m-1}}$ and $P_{\ell}^{(j+1)}$ corresponds to the top $k_{\ell}$ left singular vectors, and hence it would reasonable to impose
        $k_{\ell}\le k_{i_1}\cdots k_{i_{m-1}}$ for each $\ell$.

  \item As stated, $P_{\ell}^{(j+1)}$ is computed to be the top $k_{\ell}$ eigenvector matrix of $H_{\ell}^{(j)}$. This is not necessary
        for simply ensuring that the sequence $\{f_j:=f(P_1^{(j)},P_2^{(j)},\ldots,P_m^{(j)})\}$ of objective value monotonically increases. To guarantee that,
        it suffices to compute $P_{\ell}^{(j+1)}$ so accurate that
        \begin{equation}\label{eq:cond4cvg4HOOI(a)}
        \tr([P_{\ell}^{(j+1)}]^{\HH}H_{\ell}^{(j)}P_{\ell}^{(j+1)})\ge\tr(\big[P_{\ell}^{(j)}\big]^{\HH}H_{\ell}^{(j)}P_{\ell}^{(j)}),
        \end{equation}
        which is sufficient for \Cref{thm:cvg4HOOI}(a) to hold.
        This is good news for a large scale tensor $B$ and $P_{\ell}^{(j+1)}$
        has to be computed iteratively. But to have \Cref{thm:cvg4HOOI}(b,c,d,e), we will need more than
        \eqref{eq:cond4cvg4HOOI(a)}. See \Cref{rk:cvg4HOOI} later.

  \item A natural and cheap stopping criterion is through checking if
        \begin{equation}\label{eq:obj-stop}
        |f_j-f_{j-1}|/f_j\le\epsilon_1,
        \end{equation}
        where $\epsilon_1$ is a preselected tolerance, since the  sequence $\{f_j\}$ of
        objective values monotonically increases (see \Cref{thm:cvg4HOOI} below). However, it has a potential pitfall, namely
        false convergence, if there is a period of iterations where increases in objective value are extremely small.
        To safeguard the false convergence,
        we may check the residual for the KKT condition \eqref{eq:KKT-TD}:
        \begin{equation}\label{eq:KKT-stop}
        \epsilon_{\KKT,j}:=\sum_{\ell=1}^m\frac {\|H_{\ell}(P_{i_1}^{(j)},\ldots,P_{i_{m-1}}^{(j)})P_{\ell}^{(j)}-P_{\ell}^{(j)}\Omega_{\ell}^{(j)}\|_{\F}}
                      {\|B\|_{\F}\|B_{\ufd,\ell}\|_2}\le\epsilon_2,
        \end{equation}
        where $\Omega_{\ell}^{(j)}=\big[P_{\ell}^{(j)}\big]^{\HH}H_{\ell}(P_{i_1}^{(j)},\ldots,P_{i_{m-1}}^{(j)})P_{\ell}^{(j)}$,
        and $\epsilon_2$ is another preselected tolerance. However, according to the way the alternating iteration progresses,
        not all $H_{\ell}(P_{i_1}^{(j)},\ldots,P_{i_{m-1}}^{(j)})$ are computed or available
        because not all
        $C_{\ell}(P_{i_1}^{(j)},\ldots,P_{i_{m-1}}^{(j)})$ are computed. Hence using
        \eqref{eq:KKT-stop} entails nontrivial extra work, and in fact it may be the dominant work for large scale problems as
        we will explain in a moment.
        As a comprise, at Line 1, we  replace \eqref{eq:KKT-stop} with
        \begin{equation}\label{eq:KKT-stop'}
        \tilde\epsilon_{\KKT,j}:=\sum_{\ell=1}^m\frac {\|H_{\ell}^{(j)}P_{\ell}^{(j)}-P_{\ell}^{(j)}\wtd\Omega_{\ell}^{(j)}\|_{\F}}
                 {\|B\|_{\F}\|B_{\ufd,\ell}\|_2}\le\epsilon_2.
        \end{equation}
        where $\wtd\Omega_{\ell}^{(j)}=\big[P_{\ell}^{(j)}\big]^{\HH}H_{\ell}^{(j)}P_{\ell}^{(j)}$
        and $H_{\ell}^{(j)}$ is defined as in \eqref{eq:CijHij}. We may also estimate $\|B_{\ufd,\ell}\|_2$
        by $\sqrt{\|B_{\ufd,\ell}\|_1\|B_{\ufd,\ell}\|_{\infty}}$ for computational efficiency, where
        $\|\cdot\|_p$ for $1\le p\le\infty$ is the matrix $\ell_p$ operator norm
        for $p\in\{1,\infty\}$.
\end{enumerate}

We now present a rough estimate of computational complexity per iterative step, under the assumption
that\footnote {This assumption conceivably makes sense for small $m$ (say, $3$) and small $k_{\ell}$.}
$k_{i_1}\cdots k_{i_{m-1}}\ll n_{\ell}$, i.e.,
each $C_{\ell}(P_{i_1},\ldots,P_{i_{m-1}})$ in \eqref{eq:Ci} is a tall skinny matrix.
Each iterative step of \Cref{alg:HOOI} involves: form $C_{\ell}(P_{i_1},\ldots,P_{i_{m-1}})$ evaluated at most updated $P_{i_1},\ldots,P_{i_{m-1}}$ at the time
and compute its thin SVD, for $1\le \ell\le m$.

Let us start with real tensor $B$, and assume that it is dense.
Forming all $C_{\ell}(P_{i_1},\ldots,P_{i_{m-1}})$, in one inner for-loop: Lines 2--5 of \Cref{alg:HOOI},
costs
\begin{equation}\label{eq:cpx-real-HOOI-Cell}
O\big(n_1n_2\cdots n_m(k_1+k_2+\cdots+k_m)\big)
\end{equation}
flops.
Computing the thin SVD of $C_{\ell}(P_{i_1},\ldots,P_{i_{m-1}})$ is often done in two steps:
a thin QR decomposition followed by the SVD of the $R$-factor from the QR decomposition
at a cost about $6n_{\ell}(k_{i_1}\cdots k_{i_{m-1}})^2+20(k_{i_1}\cdots k_{i_{m-1}})^3$ flops \cite[p.493]{govl:2013} (see also \cite[subsection 3.2]{li:2024}), and hence the cost for all thin SVD, in one inner for-loop: Lines 2--5, is about
$$
O\Big(\sum_{\ell=1}^m n_{\ell}(k_1\cdots k_{\ell-1}k_{\ell+1}\cdots k_m)^2\Big).
$$
Putting it all together, we find the cost per inner for-loop, Lines 2--5 of \Cref{alg:HOOI}, is
\begin{equation}\label{eq:cpx-real-HOOI}
O\Big(n_1n_2\cdots n_m\sum_{\ell=1}^m k_{\ell}+\sum_{\ell=1}^m n_{\ell}(k_1\cdots k_{\ell-1}k_{\ell+1}\cdots k_m)^2\Big).
\end{equation}
flops. It can be seen that the dominant part is from forming $C_{\ell}(P_{i_1},\ldots,P_{i_{m-1}})$.

If $B$ is complex, then the complex arithmetic will be used throughout. Each complex addition/subtraction takes two flops while
multiplication takes 6. Roughly speaking, for complex $B$, the  computational complexity per iterative step is about 4 times as much as
the estimate in \eqref{eq:cpx-real-HOOI} (assuming there are about an equal number of addition/subtraction and multiplication operations).

\begin{remark}\label{rk:Xu2018-align}
$P_{\ell}^{(j+1)}$ calculated at Lines 2-5 for $1\le\ell\le m$ are inherently non-unique. But this does not matter so long
as $\cR(P_{\ell}^{(j+1)})$ is an accurate approximation to the top eigenspace of $H_{\ell}^{(j)}$
because $f$ is componentwise right-unitarily invariant in the sense of
\eqref{eq:f-UI} and so are $H_{\ell}$ for $1\le\ell\le m$ in \eqref{eq:Hi} in the sense of \eqref{eq:Hi-UI}.
To remove such an inherent non-uniqueness, Xu~\cite{xu:2018} proposed to align $P_{\ell}^{(j+1)}$ to $P_{\ell}^{(j)}$
each time it is computed, e.g., replacing $P_{\ell}^{(j+1)}$ with $P_{\ell}^{(j+1)}Q_{\ell}^{(j)}$ where $Q_{\ell}^{(j)}$ is
an orthonormal polar factor of $\big[P_{\ell}^{(j+1)}\big]^{\HH}P_{\ell}^{(j)}$, according to \Cref{lm:basis-align}.
It is known that $Q_{\ell}^{(j)}$ is unique if $\rank(\big[P_{\ell}^{(j+1)}\big]^{\HH}P_{\ell}^{(j)})=k$
\cite{li:1993b,li:1995}. This rank condition is eventually satisfied if $\cR(P_{\ell}^{(j+1)})$ is convergent.
With that being said, we point out that such an alignment is numerically unnecessary as far as the quality of
the eventual approximate TD is concerned. In our convergence theorem, \Cref{thm:cvg4HOOI}, below,
we indeed include a result which says, under a condition, $P_{\ell}^{(j+1)}$ can be aligned {\em in theory\/}
to yield
convergent sequence, but the alignments, such as in \cite{xu:2018}, need not to happen numerically.
\end{remark}

\subsection{Convergence analysis}\label{ssec:cvg-HOOI}
Next we will perform a convergence analysis of HOOI.
In what follows, we stick to the assignments
to $C_{\ell}^{(j)}$, $H_{\ell}^{(j)}$, and $\Lambda_{\ell}^{(j)}$  in \eqref{eq:CijHij},
and  let
\begin{equation}\label{eq:Lambdaij}
\delta_{\ell}^{(j)}=\lambda_{k_{\ell}}(H_{\ell}^{(j)})-\lambda_{k_{\ell}+1}(H_{\ell}^{(j)})
\quad\mbox{for $1\le \ell\le m$}.
\end{equation}

\begin{theorem}\label{thm:cvg4HOOI}
Let the sequence $\{(P_1^{(j)},P_2^{(j)},\ldots,P_m^{(j)})\}_{j=0}^{\infty}$ be generated by \Cref{alg:HOOI},
$(P_{*1},P_{*2},\ldots,P_{*m})$ be an accumulation point of $\{(P_1^{(j)},P_2^{(j)},\ldots,P_m^{(j)})\}_{j=0}^{\infty}$ whose
subsequence $\{(P_1^{(j)},P_2^{(j)},\ldots,P_m^{(j)})\}_{j\in\bbI}$ converges to $(P_{*1},P_{*2},\ldots,P_{*m})$, and
let $(\what P_{*1},\what P_{*2},\ldots,\what P_{*\,m-1})$ be an accumulation point of $\{(P_1^{(j+1)},P_2^{(j+1)},\ldots,P_{m-1}^{(j+1)})\}_{j\in\bbI}$.
Denote by\footnote{Notice that $\what H_{*1}=H_1(P_{*2},\ldots,P_{*m})=H_{*1}$, and, by convention,
      $H_{*m}=H_m(P_{*1},\ldots,P_{*\,m-1})$.}, for $1\le\ell\le m$,
\begin{subequations}\label{eq:Hell-star-all}
\begin{align}
H_{*\ell}&:=H_{\ell}(P_{*1},\ldots,P_{*\,\ell-1},P_{*\,\ell+1},\ldots,P_{*m}), \label{eq:Hell-star} \\
\what H_{*\ell}&:=H_{\ell}(\what P_{*1},\ldots,\what P_{*\,\ell-1},P_{*\,\ell+1},\ldots,P_{*m}). \label{eq:hatHell-star}
\end{align}
\end{subequations}
The following statements hold.
\begin{enumerate}[{\rm (a)}]
  \item The sequence $\{f(P_1^{(j)},P_2^{(j)},\ldots,P_m^{(j)})\}_{j=0}^{\infty}$ is monotonically increasing and convergent;
  \item We have, for $1\le\ell\le m$,
        \begin{equation}\label{eq:TD:KKTatMAX'-HOOI}
        \what H_{*\ell}\,P_{*\ell}=P_{*\ell}\Lambda_{*\ell},
        \end{equation}
        where
        $\Lambda_{*\ell}=P_{*\ell}^{\HH}\what H_{*\ell}\,P_{*\ell}\in\bbC^{k_{\ell}\times k_{\ell}}$ is Hermitian whose eigenvalues are the $k_{\ell}$ largest eigenvalues
        of $\what H_{*\ell}$  for $1\le \ell\le m$, respectively.
  \item In item~{\rm (b)},
        if \footnote {A necessary condition for having \eqref{eq:pos-gap2} and \eqref{eq:pos-gap3} below is
                     $k_{\ell}\le k_{i_1}\ldots k_{i_{m-1}}=k_1\cdots k_{\ell-1}k_{\ell+1}\cdots k_m$ for $1\le \ell\le m$.},
        for $1\le\ell\le m-1$,
        \begin{equation}\label{eq:pos-gap2}
        \delta_{*\ell}:=\lambda_{k_{\ell}}(H_{*\ell})-\lambda_{k_{\ell}+1}(H_{*\ell})>0,
        \end{equation}
        then $\what P_{*\ell}=P_{*\ell}Q_{*\ell}$ for some $Q_{*\ell}\in\STM{k_{\ell}}{n_{\ell}}$ for $1\le\ell\le m-1$
        and, because of \eqref{eq:Hi-UI}, \eqref{eq:TD:KKTatMAX'-HOOI} becomes, for $1\le\ell\le m$,
        \begin{equation}\label{eq:TD:KKTatMAX'-HOOI'}
        H_{*\ell}\,P_{*\ell}=P_{*\ell}\Lambda_{*\ell},
        \end{equation}
        where $\Lambda_{*\ell}=P_{*\ell}^{\HH} H_{*\ell}\,P_{*\ell}\in\bbC^{k_{\ell}\times k_{\ell}}$ is Hermitian whose eigenvalues are the $k_{\ell}$ largest eigenvalues
        of $H_{*\ell}$  for $1\le \ell\le m$, respectively.
  \item Define, for the purpose of alignment in theory,
        $Q_{\ell}^{(j)}\in\STM{k_{\ell}}{k_{\ell}}$ to be the orthonormal polar
        factor of $\big[P_{\ell}^{(j)}\big]^{\HH}P_{*\ell}$ for $1\le \ell\le m$ and $j\ge 0$.
        If $(P_{*1},P_{*2},\ldots,P_{*m})$ is an isolated accumulation point of $\{(P_1^{(j)}Q_1^{(j)},P_2^{(j)}Q_2^{(j)},\ldots,P_m^{(j)}Q_m^{(j)})\}_{j=0}^{\infty}$
        and if, besides \eqref{eq:pos-gap2} for $1\le \ell\le m-1$, also
        \begin{equation}\label{eq:pos-gap3}
        \delta_{*m}:=\lambda_{k_{m}}(H_{*m})-\lambda_{k_{m}+1}(H_{*m})>0,
        \end{equation}
        then the entire sequence $\{(P_1^{(j)}Q_1^{(j)},P_2^{(j)}Q_2^{(j)},\ldots,P_m^{(j)}Q_m^{(j)})\}_{j=0}^{\infty}$ converges to $(P_{*1},P_{*2},\ldots,P_{*m})$.

  \item We have $2m$ convergent series, for $1\le \ell\le m$,
        \begin{subequations}\label{eq:cvg4HOOI:TD:series}
        \begin{align}
        \sum_{j=0}^{\infty}\delta_{\ell}^{(j)}\,
                         \big\|\sin\Theta\big(\cR(P_{\ell}^{(j+1)}),\cR(P_{\ell}^{(j)})\big)\big\|_{\F}^2
                      &<\infty,    \label{eq:cvg4HOOI:TD:series-1} \\
        \sum_{j=0}^{\infty}\delta_{\ell}^{(j)}\,
                  \frac {\big\|H_{\ell}^{(j)}P_{\ell}^{(j)}-P_{\ell}^{(j)}\Lambda_{\ell}^{(j)}\big\|_{\F}^2}
                        {\big\|H_{\ell}^{(j)}\big\|_{\F}^2}
                      &<\infty.                 \label{eq:cvg4HOOI:TD:series-2}
        \end{align}
        \end{subequations}
\end{enumerate}
\end{theorem}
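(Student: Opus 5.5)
The plan is to use the Gauss--Seidel flow \eqref{eq:flow-GS} and the identity \eqref{eq:obj-TD}. Write $f_{j,\ell}:=f(P_1^{(j+1)},\ldots,P_\ell^{(j+1)},P_{\ell+1}^{(j)},\ldots,P_m^{(j)})$, with $f_{j,0}=f_j$ and $f_{j,m}=f_{j+1}$. By \eqref{eq:obj-TD},
\[
f_{j,\ell-1}=\tr\big([P_\ell^{(j)}]^{\HH}H_\ell^{(j)}P_\ell^{(j)}\big)
\quad\text{and}\quad
f_{j,\ell}=\tr\big([P_\ell^{(j+1)}]^{\HH}H_\ell^{(j)}P_\ell^{(j+1)}\big).
\]
Since $P_\ell^{(j+1)}$ is a top-$k_\ell$ eigenbasis of $H_\ell^{(j)}$ (Ky Fan), $f_{j,\ell}\ge f_{j,\ell-1}$. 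Hence $\{f_j\}$ increases. It is bounded above by $\|B\|_{\F}^2$, so it converges, which proves (a).

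For (e), apply \Cref{lm:maxtrace-proj} with $H=H_\ell^{(j)}$, $P_*=P_\ell^{(j+1)}$ and $P=P_\ell^{(j)}$. Then $\eta=f_{j,\ell}-f_{j,\ell-1}$. When $\delta_\ell^{(j)}>0$, \eqref{eq:eig2max} gives
\[
\delta_\ell^{(j)}\|\sin\Theta\|_{\F}^2\le\eta
\quad\text{and}\quad
\delta_\ell^{(j)}\frac{\|H_\ell^{(j)}P_\ell^{(j)}-P_\ell^{(j)}\Lambda_\ell^{(j)}\|_{\F}^2}{(\lambda_1-\lambda_n)^2}\le\eta .
\]
When $\delta_\ell^{(j)}=0$, both terms are trivially $0$. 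Since $\lambda_1(H)-\lambda_n(H)\le\|H\|_2\le\|H\|_{\F}$ for $H\succeq0$, summing over $j$ telescopes to at most $\lim f_j-f_0<\infty$. This gives both series in \eqref{eq:cvg4HOOI:TD:series}. The key by-product is that $f_{j,\ell}-f_{j,\ell-1}\to0$ for every $\ell$.

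For (b), I pass to the limit along $j\in\bbI$, refined to a further subsequence on which $(P_1^{(j+1)},\ldots,P_{m-1}^{(j+1)})\to(\what P_{*1},\ldots,\what P_{*\,m-1})$ and, by compactness, also $P_m^{(j+1)}\to\what P_{*m}$. By continuity of $H_\ell(\cdot)$, $H_\ell^{(j)}\to\what H_{*\ell}$. Taking limits in $f_{j,\ell}-f_{j,\ell-1}\to0$ gives
\[
\tr(P_{*\ell}^{\HH}\what H_{*\ell}P_{*\ell})=\tr(\what P_{*\ell}^{\HH}\what H_{*\ell}\what P_{*\ell})=\sum_{i\le k_\ell}\lambda_i(\what H_{*\ell}),
\]
where the last equality uses continuity of eigenvalues. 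So $P_{*\ell}$ attains the Ky Fan maximum. Hence $\cR(P_{*\ell})$ is a top-$k_\ell$ invariant subspace, which yields \eqref{eq:TD:KKTatMAX'-HOOI} with the stated eigenvalues. One can argue either via the first inequality of \Cref{lm:maxtrace-proj}, or directly, since a trace maximizer must span an invariant subspace for the top eigenvalues.

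For (c), I argue by induction on $\ell$. For $\ell=1$, $\what H_{*1}=H_{*1}$ has a positive gap by \eqref{eq:pos-gap2}. Both $\cR(P_{*1})$ and $\cR(\what P_{*1})$ are then top-$k_1$ invariant subspaces of $H_{*1}$: the first by (b), the second because $\what P_{*1}$ is the limit of exact top eigenbases of $H_1^{(j)}\to H_{*1}$ and the gap is positive. Uniqueness of that subspace gives $\what P_{*1}=P_{*1}Q_{*1}$. By \eqref{eq:Hi-UI}, $\what H_{*2}=H_{*2}$, and I repeat up to $\ell=m-1$. This makes $\what H_{*\ell}=H_{*\ell}$ for all $\ell\le m$, which gives \eqref{eq:TD:KKTatMAX'-HOOI'}.

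For (d), I expect the main obstacle, since it requires showing that the whole aligned sequence converges. The plan is to use the standard lemma that an isolated accumulation point $x_*$ of a sequence with $\|x_{j+1}-x_j\|\to0$ along every subsequence approaching $x_*$ is the limit of the whole sequence (Moré--Sorensen/Ostrowski type). Near $(P_{*1},\ldots,P_{*m})$, all gaps $\delta_\ell^{(j)}$ stay bounded below by $\delta_{*\ell}/2$. Here \eqref{eq:pos-gap3} is used for $\ell=m$, and induction as in (c) shows $H_\ell^{(j)}$ is close to $H_{*\ell}$. Then the series in (e), or $\eta\to0$ directly, gives $\|\sin\Theta(\cR(P_\ell^{(j+1)}),\cR(P_\ell^{(j)}))\|_{\F}\to0$ along that subsequence. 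Since the aligned bases $P_\ell^{(j)}Q_\ell^{(j)}$ are aligned to the same fixed $P_{*\ell}$, \Cref{lm:basis-align} together with the triangle inequality bounds $\|P_\ell^{(j+1)}Q_\ell^{(j+1)}-P_\ell^{(j)}Q_\ell^{(j)}\|_{\F}$ by a multiple of the sine distances to $\cR(P_{*\ell})$ plus the step sine distance. The delicate point is that this uses closeness to $P_{*\ell}$ rather than just the step size. I would handle it by the usual contradiction argument: if the sequence leaves a small ball infinitely often, it must re-enter with steps tending to zero. That would produce another accumulation point in an annulus around $(P_{*1},\ldots,P_{*m})$, contradicting isolation.
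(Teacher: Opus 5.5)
Your proposal is correct and is essentially the paper's proof. The shared steps are: Ky Fan monotonicity for (a); \Cref{lm:maxtrace-proj} with $P_*=P_\ell^{(j+1)}$, $P=P_\ell^{(j)}$ plus telescoping for (e); the gap-based induction through \eqref{eq:Hi-UI} for (c); and, for (d), alignment to the fixed $P_{*\ell}$ with \Cref{lm:basis-align} followed by the Mor\'e--Sorensen lemma (\Cref{lm:isolatedconvg}). The variations are minor: in (b) you take a direct limit of the vanishing block increments instead of the paper's contradiction argument, and in (d) you get the vanishing steps from $\delta_\ell^{(j)}\|\sin\Theta(\cR(P_\ell^{(j+1)}),\cR(P_\ell^{(j)}))\|_{\F}^2\le f_{j,\ell}-f_{j,\ell-1}$ with $\delta_\ell^{(j)}\ge\delta_{*\ell}/2$, rather than by identifying the limit of the shifted aligned subsequence.

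One small correction: in (b), where no gap is assumed, invariance of $\cR(P_{*\ell})$ must come from your direct first-order (Ky Fan equality) argument. The route through \Cref{lm:maxtrace-proj} does not work there, because that lemma requires $\lambda_{k_\ell}(\what H_{*\ell})>\lambda_{k_\ell+1}(\what H_{*\ell})$.
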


\begin{proof}
Recall \eqref{eq:CijHij} and \eqref{eq:flow-GS}, and let, for $1\le\ell\le m$,
\begin{subequations}\label{eq:cvg4HOOI:TD:pf-1}
\begin{align}
\eta_{j+\ell/m}&=\tr\big(\big[P_{\ell}^{(j+1)}\big]^{\HH}H_{\ell}^{(j)}\big[P_{\ell}^{(j+1)}\big]\big)-\tr\big(\big[P_{\ell}^{(j)}\big]^{\HH}H_{\ell}^{(j)}\big[P_{\ell}^{(j)}\big]\big)
            \label{eq:cvg4HOOI:TD:pf-1aa}\\
            &=f(P_1^{(j+1)},\ldots,P_{\ell}^{(j+1)},P_{\ell+1}^{(j)},\ldots,P_m^{(j)})
              -f(P_1^{(j+1)},\ldots,P_{\ell-1}^{(j+1)},P_{\ell}^{(j)},\ldots,P_m^{(j)}). \label{eq:cvg4HOOI:TD:pf-1ab}
\end{align}
\end{subequations}
All $\eta_{j+\ell/m}$ for $1\le\ell\le m$ are non-negative because each $P_{\ell}^{(j+1)}$ is the top $k_{\ell}$
eigenvector matrix of $H_{\ell}^{(j)}$,
yielding
\begin{align*}
f(P_1^{(j+1)},P_2^{(j+1)},\ldots,P_m^{(j+1)})&\ge f(P_1^{(j+1)},\ldots,P_{m-1}^{(j+1)},P_m^{(j)})\\
    &\ge \cdots \\
    &\ge f(P_1^{(j)},P_2^{(j)},\ldots,P_m^{(j)}).
\end{align*}
This proves item (a).
Along the way, we also showed
\begin{align}
f(P_1^{(j+1)},&P_2^{(j+1)},\ldots,P_m^{(j+1)})= f(P_1^{(j)},P_2^{(j)},\ldots,P_m^{(j)})\nonumber \\
   &+\sum_{\ell=1}^m\Big[\tr\big(\big[P_{\ell}^{(j+1)}\big]^{\HH}H_{\ell}^{(j)}\big[P_{\ell}^{(j+1)}\big]\big)
          -\tr\big(\big[P_{\ell}^{(j)}\big]^{\HH}H_{\ell}^{(j)}\big[P_{\ell}^{(j)}\big]\big)\Big].
        \label{eq:cvg4HOOI:TD:pf-3}
\end{align}

We now prove item~(b).
Without loss of generality, we may  assume that also
$$
\{(P_1^{(j+1)},P_2^{(j+1)},\ldots,,P_{m-1}^{(j+1)})\}_{j\in\bbI}
$$
converges to
$(\what P_{*1},\what P_{*2},\ldots,\what P_{*\,m-1})$; otherwise,
we can pick up a convergent subsequence of $\{(P_1^{(j+1)},P_2^{(j+1)},\ldots,,P_{m-1}^{(j+1)})\}_{j\in\bbI}$
and reassign $\bbI$ according to the convergent subsequence.
We have
\begin{subequations}\label{eq:cvg4HOOI:TD:pf-5}
\begin{alignat}{2}
\lim_{\bbI\ni j\to \infty}P_{\ell}^{(j)}&=P_{*\ell} &\quad&\mbox{for $1\le\ell\le m$}, \\
\lim_{\bbI\ni j\to \infty}P_{\ell}^{(j+1)}&=\what P_{*\ell} &\quad&\mbox{for $1\le\ell\le m-1$}.
\end{alignat}
\end{subequations}
Denote by $\tr_{\max,k}(\cdot)$ the sum of the $k$ largest eigenvalues of a Hermitian matrix, and let
$$
\delta_{\ell}:=\tr_{\max,k_{\ell}}(\what H_{*\ell})
                   -\tr(P_{*\ell}^{\HH}\what H_{*\ell}P_{*\ell})\ge 0
                   \quad\mbox{for $1\le\ell\le m$}.
$$
Assume, to the contrary, that at least one of the equations in \eqref{eq:TD:KKTatMAX'-HOOI}
with $\Lambda_{*\ell}$ as described there did not hold. Then $\delta=\sum_{\ell=1}^m\delta_{\ell}>0$.
Since $\tr_{\max,k_{\ell}}(H_{\ell}(P_{i_1},\ldots,P_{i_{m-1}}))$, $\tr(P_{\ell}^{\HH}H_{\ell}(P_{i_1},\ldots,P_{i_{m-1}})P_{\ell})$ for $1\le \ell\le m$ and $f(P_1,P_2,\ldots,P_m)$
are continuous in $(P_1,P_2,\ldots,P_m)$ with $P_{\ell}\in\bbC^{n_{\ell}\times k_{\ell}}\,\,\,\forall\ell$,
there is an $j_0\in\bbI$ such that, for $1\le\ell\le m$,
\begin{subequations}\label{eq:cvg4HOOI:TD:pf-7}
\begin{gather}
\Big|\tr_{\max,k_{\ell}}(\what H_{*\ell})-\tr_{\max,k_{\ell}}(H_{\ell}^{(j_0)})\Big| <\frac {\delta}{2m+1}, \label{eq:thm:cvg4TDm:pf-7a}\\
\Big|\tr(P_{*\ell}^{\HH}\what H_{*\ell}P_{*\ell})
               -\tr([P_{\ell}^{(j_0)}]^{\HH}H_{\ell}^{(j_0)}P_{\ell}^{(j_0)})\Big|
             <\frac {\delta}{2m+1}, \label{eq:thm:cvg4TDm:pf-7b} \\
f_*-\frac {\delta}{2(2m+1)}<f(P_1^{(j_0)},P_2^{(j_0)},\ldots,P_m^{(j_0)})\le f_*,
       \label{eq:thm:cvg4TDm-subs-1:pf-7e}
\end{gather}
\end{subequations}
where $f_*:=f(P_{*1},P_{*2},\ldots,P_{*m})$.
By \eqref{eq:cvg4HOOI:TD:pf-3} and noticing
$\tr\big(\big[P_{\ell}^{(j+1)}\big]^{\HH}H_{\ell}^{(j)}\big[P_{\ell}^{(j+1)}\big]\big)=\tr_{\max,k_{\ell}}(H_{\ell}^{(j)})$,
we have
\begin{align}
f(P_1^{(j_0+1)},&P_2^{(j_0+1)},\ldots,P_m^{(j_0+1)})
   >f_*-\frac {\delta}{2(2m+1)} \nonumber \\
  & +\sum_{\ell=1}^m\Big[\tr_{\max,k_{\ell}}(\what H_{*\ell})-\frac {\delta}{2m+1}
               -\tr(P_{*\ell}^{\HH}\what H_{*\ell}P_{*\ell})-\frac {\delta}{2m+1}\Big] \nonumber\\
  &=f_*+\frac {\delta}{2(2m+1)}>f_*, \label{eq:thm:cvg4TDm-subs-1:pf-8}
\end{align}
contradicting $f(P_1^{(i)},P_2^{(i)},\ldots,P_m^{(i)})\le\lim_{j\to\infty}f(P_1^{(j)},P_2^{(j)},\ldots,P_m^{(j)})= f_*$ for all $i$.
This completes the proof of item~(b).

We note that \eqref{eq:TD:KKTatMAX'-HOOI'} for $\ell=1$ is the same as \eqref{eq:TD:KKTatMAX'-HOOI} for $\ell=1$.
In what follows, we will show, by induction, that
there exists a $Q_{*\ell}\in\STM{k_{\ell}}{k_{\ell}}$ such that $\what P_{*\ell}=P_{*\ell}Q_{*\ell}$ for $1\le\ell\le m-1$.
Since we always have
\begin{equation}\label{eq:H1-part-eigD}
H_1(P_2^{(j)},\ldots,P_m^{(j)})\,P_1^{(j+1)}=P_1^{(j+1)}\Lambda_1^{(j)},
\end{equation}
a partial eigen-decomposition such that
$\eig(\Lambda_1^{(j)})$ consists of the $k_1$ largest eigenvalues of $H_1(P_2^{(j)},\ldots,P_m^{(j)})$. Letting $\bbI\ni j\to\infty$
yields $H_{*1}\,\what P_{*1}=\what P_{*1}\what \Lambda_{*1}$, a partial eigen-decomposition also such that
$\eig(\what\Lambda_{*1})$ consists of the $k_1$ largest eigenvalues of $H_{*1}$.
If \eqref{eq:pos-gap2} for $\ell=1$ holds, then the eigenspace associated with the $k_1$ largest eigenvalues of $H_{*1}$
is unique and thus there exists a $Q_{*1}\in\STM{k_1}{k_1}$ such that $\what P_{*1}=P_{*1}Q_{*1}$.
Suppose now that there exist  $Q_{*\ell}\in\STM{k_{\ell}}{k_{\ell}}$ such that $\what P_{*\ell}=P_{*\ell}Q_{*\ell}$
for $1\le\ell\le s< m-1$.
Since we have
$$
H_{s+1}(P_1^{(j+1)},\ldots,P_s^{(j+1)},P_{s+2}^{(j)},\ldots,P_m^{(j)})\,P_{s+1}^{(j+1)}=P_{s+1}^{(j+1)}\Lambda_{s+1}^{(j)},
$$
a partial eigen-decomposition such that
$\eig(\Lambda_{s+1}^{(j)})$ consists of the $k_{s+1}$ largest eigenvalues of
$H_{s+1}(P_1^{(j+1)},\ldots,P_s^{(j+1)},P_{s+2}^{(j)},\ldots,P_m^{(j)})$.
Letting $\bbI\ni j\to\infty$
yields
$$
H_{s+1}(\what P_{*1},\ldots,\what P_{s*},P_{*\,s+2},\ldots,P_{*m})\what P_{*\,s+1}=\what P_{*\,s+1}\what \Lambda_{*\,s+1},
$$
i.e., $H_{*\,s+1}\,\what P_{*\,s+1}=\what P_{*\,s+1}\what \Lambda_{*\,s+1}$ by the induction hypothesis that
$\what P_{*\ell}=P_{*\ell}Q_{\ell}$ for $1\le\ell\le s< m-1$. It is
a partial eigen-decomposition also such that
$\eig(\what\Lambda_{*\,s+1})$ consists of the $k_{s+1}$ largest eigenvalues of $H_{*\,s+1}$  as well.
This, together with \eqref{eq:pos-gap2} for $\ell=s+1$, imply that
there exists  $Q_{*\,s+1}\in\STM{k_{s+1}}{k_{s+1}}$ such that $\what P_{*\,s+1}=P_{*\,s+1}Q_{*\,s+1}$, completing the induction proof,
and, at the same time, the proof of item~(c).

We now prove item~(d) with the help of \Cref{lm:isolatedconvg} to be stated later. To simplify notation, we will write
$\wtd P_{\ell}^{(j)}=P_{\ell}^{(j)}Q_{\ell}^{(j)}$.
Let $\{(\wtd P_1^{(j)},\wtd P_2^{(j)},\ldots,\wtd P_m^{(j)})\}_{j\in\bbI_1}$ be any convergent subsequence that converges to $(P_{*1},P_{*2},\ldots,P_{*m})$.
Since $\{(\wtd P_1^{(j+1)},\wtd P_2^{(j+1)},\ldots,\wtd P_m^{(j+1)})\}_{j\in\bbI_1}$ is a bounded sequence, it has a convergent subsequence
$\{(\wtd P_1^{(j+1)},\wtd P_2^{(j+1)},\ldots,\wtd P_m^{(j+1)})\}_{j\in\bbI_2}$ that converges to
$(\what P_{*1},\what P_{*2},\ldots,\what P_{*m})$,
where $\bbI_2\subseteq\bbI_1$.
As before, we have \eqref{eq:H1-part-eigD},
yielding
\begin{align*}
H_1(\wtd P_2^{(j)},\ldots,\wtd P_m^{(j)})\wtd P_1^{(j+1)}&=H_1(P_2^{(j)},\ldots,P_m^{(j)})P_1^{(j+1)}Q_1^{(j+1)} \\
  &=\wtd P_1^{(j+1)}\underbrace{(\big[Q_1^{(j+1)}\big]^{\HH}\Lambda_1^{(j)}Q_1^{(j+1)})}_{=:\wtd\Lambda_1^{(j)}}.
\end{align*}
Letting $\bbI_1\supseteq\bbI_2\ni j\to\infty$ yields
$H_{*1}\what P_{*1}=\what P_{*1}\wtd\Lambda_{*1}$, which as we argued before implies
$\what P_{*1}=P_{*1}Q_{1}$ for some $Q_{1}\in\STM{k_1}{k_1}$. We claim that $\what P_{*1}=P_{*1}$, i.e., $Q_{1}=I_{k_1}$. To this end,
we denote by $\Theta_1^{(j+1)}:=\Theta(\cR(\wtd P_1^{(j+1)}),\cR(P_{*1}))$ and
notice that $\cR(\what P_{*1})=\cR(P_{*1})$ and by \eqref{eq:basis-align}
\begin{equation}\label{eq:thm:cvg4TDm-subs-1:pf-9}
\|\wtd P_1^{(j+1)}-P_{*1}\|_{\F}=2\|\sin(\Theta_1^{(j+1)}/2)\|_{\F}.
\end{equation}
Because
$$
\lim_{\bbI_2\ni j\to\infty}\wtd P_1^{(j+1)}=\what P_{*1}
\quad\mbox{implies}\quad
\lim_{\bbI_2\ni j\to\infty}\Theta_1^{(j+1)}=\Theta(\cR(\what P_{*1}),\cR(P_{*1}))=0,
$$
we get, by \eqref{eq:thm:cvg4TDm-subs-1:pf-9}, $\|\wtd P_1^{(j+1)}-P_{*1}\|_{\F}\to 0$ as $\bbI_1\supseteq\bbI_2\ni j\to\infty$. Also,
as $\bbI_1\supseteq\bbI_2\ni j\to\infty$, $\|\wtd P_1^{(j+1)}-P_{*1}\|_{\F}\to\|\what P_{*1}-P_{*1}\|_{\F}$.
We conclude $\|\what P_{*1}-P_{*1}\|_{\F}=0$, leading to
$\what P_{*1}=P_{*1}$. Similarly, we can prove $\what P_{*\ell}=P_{*\ell}$ for $\ell=2,\ldots,m$, too.
Hence as $\bbI_2\ni j\to\infty$
$$
\|\wtd P_1^{(j)}-\wtd P_1^{(j+1)}\|_{\F}\le\|\wtd P_1^{(j)}- P_{*1}\|_{\F}+\| P_{*1}-\wtd P_1^{(j+1)}\|_{\F}\to 0,
$$
and similarly $\|\wtd P_{\ell}^{(j)}-\wtd P_{\ell}^{(j+1)}\|_{\F}\to 0$ for $\ell=2,\ldots,m$.
Now use Lemma~\ref{lm:isolatedconvg} below to conclude that the entire sequence
$\{(\wtd P_1^{(j)},\wtd P_2^{(j)},\ldots,\wtd P_m^{(j)})\}_{j=0}^{\infty}$ converges to $(P_{*1},P_{*2},\ldots,P_{*m})$, as needed.

Finally for item (e), we return to \eqref{eq:cvg4HOOI:TD:pf-3}. By \Cref{lm:maxtrace-proj},
we have, for $1\le \ell\le m$,
\begin{equation}\label{eq:thm:cvg4TDm-subs-1:pf-10}
\frac {\big\|H_{\ell}^{(j)}P_{\ell}^{(j)}-P_{\ell}^{(j)}\Lambda_{\ell}^{(j)}\big\|_{\F}^2}
                        {[\lambda_1(H_{\ell}^{(j)})-\lambda_{n_{\ell}}(H_{\ell}^{(j)})]^2}
    \le\big\|\sin\Theta\big(\cR(P_{\ell}^{(j+1)}),\cR(P_{\ell}^{(j)})\big)\big\|_{\F}^2
    \le\frac {\eta_{j+\ell/m}}{\delta_{\ell}^{(j)}}.
\end{equation}
Hence \eqref{eq:cvg4HOOI:TD:series-2} is a consequence of \eqref{eq:cvg4HOOI:TD:series-1} together with
$$
0\le\lambda_1(H_{\ell}^{(j)})-\lambda_{n_{\ell}}(H_{\ell}^{(j)})\le 2\|H_{\ell}^{(j)}\|_2\le 2\|H_{\ell}^{(j)}\|_{\F}.
$$
To see \eqref{eq:cvg4HOOI:TD:series-1}, we combine
$$
2\sum_{j=0}^{\infty}\sum_{\ell=1}^m\eta_{j+\ell/m}
     \le\lim_{j\to\infty} f(P_1^{(j+1)},\ldots,P_m^{(j+1)})-f(P_1^{(0)},\ldots,P_m^{(0)})
     <\infty,
$$
with the second inequality in \eqref{eq:thm:cvg4TDm-subs-1:pf-10}.
The proof is completed.
\end{proof}

The following lemma was used in the proof above and it is an equivalent restatement of \cite[Lemma 4.10]{moso:1983}
(see also \cite[Proposition 7]{kaqi:1999}) in the context of a metric space.

\begin{lemma}[{\cite[Lemma 4.10]{moso:1983}}]\label{lm:isolatedconvg}
Let $\scrG$ be a metric space with metric $\dist(\cdot,\cdot)$, and let
$\{\by_i\}_{i=0}^{\infty}$ be a sequence in $\scrG$. If
$\by_*\in \scrG$ is an isolated accumulation point 	
of the sequence such that, for every subsequence $\{\by_i\}_{i\in\bbI}$
converging to $\by_*$, there is an infinite subset $\widehat{\bbI}\subseteq \bbI$ satisfying
$\dist(\by_i,\by_{i+1})\to 0$ as $\what\bbI\ni i\to\infty$,
then the entire sequence $\{\by_i\}_{i=0}^{\infty}$ converges to $\by_*$.
\end{lemma}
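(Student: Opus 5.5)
We argue by contradiction. Assume $\{\by_i\}$ does not converge to $\by_*$. Then there are an $\epsilon_0>0$ and infinitely many indices $i$ with $\dist(\by_i,\by_*)\ge\epsilon_0$.

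\emph{Choice of radius.} Because $\by_*$ is an isolated accumulation point, there is an $r>0$ such that $\by_*$ is the only accumulation point in the closed ball $\{\by:\dist(\by,\by_*)\le r\}$. Shrinking $r$ if necessary, take $r<\epsilon_0$. The key fact is then the following. Any infinite family of terms lying in the annulus
$$
A=\{\by:\ r/2\le\dist(\by,\by_*)\le r\}
$$
would have to produce a second accumulation point in the ball. This needs a compactness assumption. In the intended application $\scrG$ is a product of Stiefel manifolds, which is compact, so I will state explicitly that the sequence lies in a compact set, or equivalently that every subsequence has a convergent sub-subsequence, as implicitly assumed in \cite{moso:1983}. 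Under this assumption, only finitely many terms lie in $A$.

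\emph{Entrance and exit times.} Since $\by_*$ is an accumulation point, infinitely many terms lie within $r/2$ of $\by_*$. By assumption, infinitely many terms lie outside the ball of radius $r$. So the sequence leaves $B(\by_*,r/2)$ infinitely often. Set
$$
\bbI=\{\,i:\ \dist(\by_i,\by_*)<r/2,\ \dist(\by_{i+1},\by_*)\ge r/2\,\},
$$
which is an infinite set. For all large $i\in\bbI$, the term $\by_{i+1}$ cannot lie in $A$, because $A$ contains only finitely many terms. Hence $\dist(\by_{i+1},\by_*)>r$ for all large $i\in\bbI$.

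\emph{Main step.} The subsequence $\{\by_i\}_{i\in\bbI}$ stays in the closed ball of radius $r/2$. By compactness its accumulation points lie in that ball, and the only accumulation point there is $\by_*$. So $\{\by_i\}_{i\in\bbI}$ converges to $\by_*$.

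The hypothesis now gives an infinite $\what\bbI\subseteq\bbI$ with $\dist(\by_i,\by_{i+1})\to0$ as $\what\bbI\ni i\to\infty$. But for large $i\in\what\bbI$,
$$
\dist(\by_i,\by_{i+1})\ge\dist(\by_{i+1},\by_*)-\dist(\by_i,\by_*)>r-r/2=r/2,
$$
which is a contradiction. Therefore $\by_i\to\by_*$.

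The main obstacle is the finiteness claim for $A$ together with the convergence of the subsequence over $\bbI$. Both rely on relative compactness of the sequence, which the lemma does not state explicitly. I would address this by adding the standing assumption that the sequence lies in a compact set, which holds in the applications since every $P_\ell^{(j)}Q_\ell^{(j)}$ lies in a compact Stiefel manifold.
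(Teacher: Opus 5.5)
Your argument is correct under the compactness assumption you add, and it is essentially the standard Moré--Sorensen exit-index argument. The paper gives no proof of its own here; it only cites \cite[Lemma 4.10]{moso:1983}, which is stated in $\bbR^n$.

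Your caveat about compactness is more than a technicality: in an arbitrary metric space the lemma as written is false. Here is a counterexample in $\ell^2$ with unit vectors $e_k$. Let the sequence run through the blocks $0,\tfrac1k e_k,\tfrac2k e_k,\ldots,e_k,\tfrac{k-1}k e_k,\ldots,\tfrac1k e_k$ for $k=2,3,\ldots$ in turn.
\begin{itemize}
\item Within block $k$, and from its last term to the $0$ that starts block $k+1$, every step has length $1/k$. So $\dist(\by_i,\by_{i+1})\to0$ along the whole sequence, and the hypothesis holds trivially.
\item Terms of norm at least $\delta$ that lie in different blocks are at least $\sqrt2\,\delta$ apart, and each block is finite. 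Hence $0$ is the only accumulation point, so it is isolated.
\item Yet $\|\by_i\|=1$ infinitely often, so the sequence does not converge.
\end{itemize}
The lemma therefore needs an extra hypothesis. Your remark that the paper's applications (\Cref{thm:cvg4HOOI}(d), \Cref{thm:cvg4ASI}(d)) take place in a compact product of Stiefel manifolds is exactly what keeps them valid.

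Two refinements.

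First, the annulus step is unnecessary. By the definition of $\bbI$ you already have $\dist(\by_{i+1},\by_*)\ge r/2$ for $i\in\bbI$. On the other hand, $\by_i\to\by_*$ and $\dist(\by_i,\by_{i+1})\to0$ along $\what\bbI$ give $\dist(\by_{i+1},\by_*)\le\dist(\by_{i+1},\by_i)+\dist(\by_i,\by_*)\to0$. That is already a contradiction, without knowing $\dist(\by_{i+1},\by_*)>r$.

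Second, after this simplification compactness is used only once, to conclude $\{\by_i\}_{i\in\bbI}\to\by_*$. For that it suffices that the closed ball $\{\by:\dist(\by,\by_*)\le r/2\}$ be compact, i.e.\ local compactness at $\by_*$. This is why the result holds in $\bbR^n$ with no boundedness assumption on the whole sequence. So rather than saying a compact containing set is implicitly assumed in \cite{moso:1983}, it is more accurate to say the needed compactness is automatic in that setting.
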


\begin{remark}\label{rk:cvg4HOOI}
\Cref{thm:cvg4HOOI} is proved under the condition that, at Lines 2 -- 5 of \Cref{alg:HOOI},
$P_{\ell}^{(j+1)}$ as the top $k_{\ell}$ left singular vector matrix of
$C_{\ell}^{(j)}$, or equivalently the top $k_{\ell}$ eigenvector matrix of $H_{\ell}^{(j)}$. This can be too demanding,
especially when $n_{\ell}$ or $k_{i_1}\ldots k_{i_{m-1}}$ is very large and $P_{\ell}^{(j+1)}$ has to be computed iteratively.
Immediately after stating \Cref{alg:HOOI}, we commented that for \Cref{thm:cvg4HOOI}(a) it is sufficient
to compute $P_{\ell}^{(j+1)}$ so that \eqref{eq:cond4cvg4HOOI(a)} holds, as one can see from the proof above.
One the other hand, we will need more than \eqref{eq:cond4cvg4HOOI(a)} to guarantee
\Cref{thm:cvg4HOOI}(b -- e), namely, we will need
\begin{equation}\label{eq:cond4cvg4HOOI(b--e)}
\tr([P_{\ell}^{(j+1)}]^{\HH}H_{\ell}^{(j)}P_{\ell}^{(j+1)})-\tr(\big[P_{\ell}^{(j)}\big]^{\HH}H_{\ell}^{(j)}P_{\ell}^{(j)})
   \ge c\big[\tr_{\max,k_{\ell}}(H_{\ell}^{(j)})-\tr(\big[P_{\ell}^{(j)}\big]^{\HH}H_{\ell}^{(j)}P_{\ell}^{(j)})\big],
\end{equation}
where $0<c\le 1$ is a constant, independent of $\ell$ and $j$. Carefully examining the proof above,
\eqref{eq:cond4cvg4HOOI(b--e)} will allow us to have something like
\eqref{eq:thm:cvg4TDm-subs-1:pf-8}, i.e.,
$f(P_1^{(j_0+1)},P_2^{(j_0+1)},\ldots,P_m^{(j_0+1)})>f_*$.
\end{remark}

As a corollary of Theorem~\ref{thm:cvg4HOOI}(e),
if $\delta_{\ell}^{(j)}=\lambda_{k_{\ell}}(H_{\ell}^{(j)})-\lambda_{k_{\ell}+1}(H_{\ell}^{(j)})$ is eventually bounded below away from $0$
uniformly\footnote {By which we mean that there exist a constant $c>0$ and an integer $K$ such that
  $\delta_{\ell}^{(j)}\ge c$ for all $j\ge K$.},
then
\begin{equation}\label{eq:NEPv-always}
\lim_{j\to\infty}\frac {\big\|H_{\ell}^{(j)}P_{\ell}^{(j)}-P_{\ell}^{(j)}\Lambda_{\ell}^{(j)}\big\|_{\F}}
                        {\big\|H_{\ell}^{(j)}\big\|_{\F}} =0,
\end{equation}
namely, increasingly $H_{\ell}^{(j)}P_{\ell}^{(j)}\approx P_{\ell}^{(j)}\Lambda_{\ell}^{(j)}$
towards a partial eigen-decomposition of $H_{\ell}^{(j)}$ with $\eig(\Lambda_{\ell}^{(j)})$ being composed of the top $k_{\ell}$
eigenvalues of $H_{\ell}^{(j)}$. This
means that $(P_1^{(j)},P_2^{(j)},\ldots,P_m^{(j)})$ becomes a more and more accurate approximate solution
to the KKT condition \eqref{eq:KKT-TD},
even in the absence of knowing whether the entire sequence $\{(P_1^{(j)},P_2^{(j)},\ldots,P_m^{(j)})\}_{j=0}^{\infty}$ converges or not.

Implication in \eqref{eq:NEPv-always} by \Cref{thm:cvg4HOOI}(e) is perhaps more significant
than the conclusions on an accumulation point $(P_{*1},P_{*2},\ldots,P_{*m})$ in \Cref{thm:cvg4HOOI}(b,c) due to our previously
comments that maximizer tuples  of \eqref{eq:opt-TD} are inherently non-unique. Rather than claiming
entrywise convergence of computed approximations, the implication shows that
computed maximizer tuples progress to satisfy the KKT condition while, at the same time,
the objective moves up as guaranteed by \Cref{thm:cvg4HOOI}(a).

\begin{remark}\label{rk:Xu2018-cvg}
Previously, Xu~\cite{xu:2018} proposed to align $P_{\ell}^{(j+1)}$ and $P_{\ell}^{(j)}$ in a way like \Cref{lm:basis-align} suggests,
every time $P_{\ell}^{(j+1)}$ is computed at Line 4 of \Cref{alg:HOOI}, yielding the so-called  greedy HOOI, and then
performed convergence analysis of it.
Numerically as far as computing an approximate TD of the same quality is concerned, this alignment is unnecessary as we explained in
\cref{sec:TD}. Xu's analysis is for the greedy HOOI on real tensors only, and  it
relies on the result of \cite{bost:2014} which is based on the Kurdyka–Lojasiewicz theory for semi-algebraic functions that the numerical linear algebra community is not familiar to. Our analysis, however, is more accessible by the community as it
is entirely based on numerical linear algebra knowledge, applies to both real and complex tensors,
and on HOOI itself.
\end{remark}


\section{Alternating Subspace Iteration (ASI)}\label{sec:ASI}
\subsection{The algorithm}\label{ssec:ASI}
Previously in HOOI, $P_{\ell}^{(j+1)}$ is computed to {\em exactly\/} minimize $\tr(P_{\ell}^{\HH}H_{\ell}^{(j)}P_{\ell})$ over $P_{\ell}\in\STM{k_{\ell}}{n_{\ell}}$.
Alternatively, one may use one-step subspace iteration on $H_{\ell}^{(j)}$
\cite{demm:1997,govl:2013,stew:2001a} to improve $P_{\ell}^{(j)}$. The basic idea is to let $P_{\ell}^{(j+1)}\in\STM{k_{\ell}}{n_{\ell}}$
be some orthonormal basis matrix
of $\cR(H_{\ell}^{(j)}P_{\ell})$.
Kroonenberg and De Leeuw~\cite{krdl:1980} appeared to be the first researchers to do so. They also proposed to use essentially the truncated
HOSVD for their initial guess, whereas Tucker~\cite{tuck:1966} simply used  the initial guess as their final solution for TD,
which is not optimal in the sense of minimizing $\|B-\what B\|_{\F}$, and, in fact, still far from some stationary point.
It is interesting to note that  the term HOSVD first explicitly appeared  in \cite{dldv:2000a} in 2000.

There are a number of ways to choose $P_{\ell}^{(j+1)}$ from one-step subspace iteration:
\begin{enumerate}[(1)]
  \item the thin QR decomposition: $H_{\ell}^{(j)}P_{\ell}^{(j)}=P_{\ell}^{(j+1)} R_{\ell}^{(j)}$ where $R_{\ell}^{(j)}$ is upper triangular;
  \item the polar decomposition: $H_{\ell}^{(j)}P_{\ell}^{(j)}=P_{\ell}^{(j+1)}\Lambda_{\ell}^{(j)}$ where $\Lambda_{\ell}^{(j)}\succeq 0$.
\end{enumerate}
Both are numerically stable, and even when $\rank(H_{\ell}^{(j)}P_{\ell}^{(j)})<k_{\ell}$, they can still produce numerically accurate $P_{\ell}^{(j+1)}\in\STM{k_{\ell}}{n_{\ell}}$.
Computed $P_{\ell}^{(j+1)}$ by the two methods are in general different, but it does not pose any difference in approximation accuracy
as far as the objective value is concerned. The thin QR decomposition is cheaper. When
$\rank(H_{\ell}^{(j)}P_{\ell}^{(j)})=k_{\ell}$, $P_{\ell}^{(j+1)}$ by the polar decomposition is unique and it by the thin QR decomposition can also be
uniquely specified, too, by enforcing the diagonal entries of $R_{\ell}^{(j)}$ positive.
\Cref{alg:ASI} outlines the resulting algorithm -- {\em alternating subspace iteration\/} (ASI) for TD. For convenience of our
convergence analysis, we state the algorithm with the polar decomposition; otherwise we will have to introduce
basis alignment in theory as we did in \Cref{thm:cvg4HOOI} for HOOI in \Cref{alg:HOOI}.
\Cref{alg:ASI} can be regarded as an extension of the NPDo approach in \cite{li:2024} to solve $m$ coupled NPDo
in \eqref{eq:KKT-TD}.

\begin{algorithm}[t]
\caption{ASI: the alternating subspace iteration for TD \cite{krdl:1980}.}
\label{alg:ASI}
\begin{algorithmic}[1]
\REQUIRE $m$-mode tensor $B\in\bbC^{n_1\times n_2\times\cdots\times n_m}$,
         integers $1\le k_{\ell}\le n_{\ell}$ for $1\le \ell\le m$,
         and initial $(P_1^{(0)},P_2^{(0)},\ldots,P_m^{(0)})$ with $P_{\ell}\in\STM{k_{\ell}}{n_{\ell}}\,\,\,\forall \ell$;
\ENSURE  an approximate optimal TD of tensor $B$.
\FOR{$j=0,1,\ldots$ until convergence}
    \FOR{$\ell=1,2\ldots,m$}
        \STATE compute $H_{\ell}^{(j)}P_{\ell}^{(j)}$ according to $[C_{\ell}^{(j)}]([C_{\ell}^{(j)}]^{\HH}P_{\ell}^{(j)})$,
               where $C_{\ell}^{(j)}$ and $H_{\ell}^{(j)}$ are as defined in \eqref{eq:CijHij};
        \STATE compute $P_{\ell}^{(j+1)}$  as the orthonormal polar factor of
               $H_{\ell}^{(j)}P_{\ell}^{(j)}$;
    \ENDFOR
\ENDFOR
\RETURN the last $(P_1^{(j)},P_2^{(j)},\ldots,P_m^{(j)})$.
\end{algorithmic}
\end{algorithm}

In the case of the polar decomposition,
if $\rank(H_{\ell}^{(j)}P_{\ell}^{(j)})=k_{\ell}$, then also
\begin{equation}\label{eq:krdl1980}
P_{\ell}^{(j+1)}=H_{\ell}^{(j)}P_{\ell}^{(j)}\big(\big[P_{\ell}^{(j)}\big]^{\HH}\big[H_{\ell}^{(j)}\big]^2P_{\ell}^{(j)}\big)^{-1/2}.
\end{equation}
This is what was used in \cite{krdl:1980}, where it was proposed to implement \eqref{eq:krdl1980} via computing
the eigen-decomposition of $\big[P_{\ell}^{(j)}\big]^{\HH}\big[H_{\ell}^{(j)}\big]^2P_{\ell}^{(j)}$. Today,
we know that this is a less way to
compute the orthonormal polar factor of $H_{\ell}^{(j)}P_{\ell}^{(j)}$ because of potential singularity or near singularity.
Kroonenberg and {De Leeuw}~\cite{krdl:1980} needed it for certain continuity while employing a result
from \cite{deso:1959} for some fixed-point iteration to conduct their convergence analysis, but there are still gaps
to fill in their analysis on which we will comment again at the end of this section.
A few comments are in order:
\begin{enumerate}[(i)]
  \item Each $H_{\ell}^{(j)}=[C_{\ell}^{(j)}][C_{\ell}^{(j)}]^{\HH}$ should not be explicitly formed so that matrix-matrix product
        $H_{\ell}^{(j)}P_{\ell}^{(j)}$ can be computed as $[C_{\ell}^{(j)}]([C_{\ell}^{(j)}]^{\HH}P_{\ell}^{(j)})$.
  \item Previous comments on grouping two or more iterations in \Cref{alg:HOOI} to share commonly computed quantities apply here too.
  \item The same remarks on stopping criteria for \Cref{alg:HOOI} also apply here.
\end{enumerate}

We now present a rough estimate of computational complexity per iterative step of ASI, under the assumption that
$k_{i_1}\cdots k_{i_{m-1}}\ll n_{\ell}$, i.e.,
each $C_{\ell}(P_{i_1},\ldots,P_{i_{m-1}})$ in \eqref{eq:Ci} is a tall skinny matrix.
Each iterative step of \Cref{alg:ASI} involves: form $C_{\ell}\equiv C_{\ell}(P_{i_1},\ldots,P_{i_{m-1}})$ evaluated
at most updated $P_{i_1},\ldots,P_{i_{m-1}}$ at the time,
form $[C_{\ell}^{(j)}]([C_{\ell}^{(j)}]^{\HH}P_{\ell}^{(j)})$,
and compute its polar decomposition via the thin SVD, for $1\le \ell\le m$.

Let us start with real tensor $B$, and assume that it is dense.
As in subsection~\ref{ssec:HOOI}, the number of flops for
forming all $C_{\ell}(P_{i_1},\ldots,P_{i_{m-1}})$, in one for-loop: Lines 2--5 of \Cref{alg:ASI},
is also about the same as the one in \eqref{eq:cpx-real-HOOI-Cell}. Forming all
$[C_{\ell}^{(j)}]([C_{\ell}^{(j)}]^{\HH}P_{\ell}^{(j)})$ costs
$$
4k_1k_2\cdots k_m\sum_{\ell=1}^m n_{\ell}
$$
and computing the thin SVD of all $[C_{\ell}^{(j)}]([C_{\ell}^{(j)}]^{\HH}P_{\ell}^{(j)})$ is
at a cost
$$
6\sum_{\ell=1}^m \big[n_{\ell}k_{\ell}^2+20(k_{\ell})^3\big]
$$
flops \cite[p.493]{govl:2013} (see also \cite[subsection 3.2]{li:2024}).
Putting it all together, we find the cost per the for-loop, Lines 2--5 of \Cref{alg:ASI}, is
\begin{equation}\label{eq:cpx-real-ASI}
O\Big(n_1n_2\cdots n_m\sum_{\ell=1}^m k_{\ell}
      +k_1k_2\cdots k_m\sum_{\ell=1}^m n_{\ell}
      +\sum_{\ell=1}^m n_{\ell}k_{\ell}^2\Big)
\end{equation}
flops.
It can be seen that the dominant part is again from forming $C_{\ell}(P_{i_1},\ldots,P_{i_{m-1}})$.
Putting aside the cost for forming $C_{\ell}(P_{i_1},\ldots,P_{i_{m-1}})$,
the remaining part in \eqref{eq:cpx-real-ASI} is likely much smaller than that in \eqref{eq:cpx-real-HOOI} because, e.g.,
for $n_{\ell}=n$ and $k_{\ell}=k\ll n$ for all $\ell$, it is $O(nk^m)$ in \eqref{eq:cpx-real-ASI} against $O(nk^{m+1})$ for the remaining parts in \eqref{eq:cpx-real-HOOI} and \eqref{eq:cpx-real-ASI}.
With that being said, it is noted that per step \Cref{alg:HOOI} should increase the objective more than \Cref{alg:ASI}
if starting at the same approximation $(P_1^{(j)},P_2^{(j)},\ldots,P_m^{(j)})$, a fact that is well
reflected in our numerical experiments in \cref{sec:egs} where it is observed HOOI overwhelmingly takes fewer iterations
than ASI.

If $B$ is complex, then the computational complexity per iterative step is about 4 times as much as
the estimate in \eqref{eq:cpx-real-ASI} (assuming there are about an equal number of addition/subtraction and multiplication operations).

\subsection{Convergence Analysis}\label{ssec:cvg-ASI}
ASI is a special case of the NPDo approach for tensor block-diagonalization \cite{liwy:2026tbd:arXiv}.
\Cref{thm:cvg4ASI} is a corollary of \cite[Theorem 4.1]{liwy:2026tbd:arXiv}, whose proof relies on
\Cref{lm:polar2max,lm:polar2max'}. Recall $C_{\ell}^{(j)}$, $H_{\ell}^{(j)}$, and $\Lambda_{\ell}^{(j)}$
introduced in \eqref{eq:CijHij}.
%

\begin{theorem}\label{thm:cvg4ASI}
Let the sequence $\{(P_1^{(j)},P_2^{(j)},\ldots,P_m^{(j)})\}_{j=0}^{\infty}$ be generated by \Cref{alg:ASI},
$(P_{*1},P_{*2},\ldots,P_{*m})$ be an accumulation point of $\{(P_1^{(j)},P_2^{(j)},\ldots,P_m^{(j)})\}_{j=0}^{\infty}$ whose
subsequence $\{(P_1^{(j)},P_2^{(j)},\ldots,P_m^{(j)})\}_{j\in\bbI}$ converges to $(P_{*1},P_{*2},\ldots,P_{*m})$, and
let $(\what P_{*1},\what P_{*2},\ldots,\what P_{*\,m-1})$ be an accumulation point of $\{(P_1^{(j+1)},P_2^{(j+1)},\ldots,P_{m-1}^{(j+1)})\}_{j\in\bbI}$, and accordingly keep
notation $H_{*\ell}$ and $\what H_{*\ell}$ be as in \eqref{eq:Hell-star-all}.
The following statements hold.
\begin{enumerate}[{\rm (a)}]
  \item The sequence $\{f(P_1^{(j)},P_2^{(j)},\ldots,P_m^{(j)})\}_{j=0}^{\infty}$ is monotonically increasing and convergent;
  \item The equations in \eqref{eq:TD:KKTatMAX'-HOOI} hold and each is a polar decomposition, i.e.,
        $\Lambda_{*\ell}=P_{*\ell}^{\HH}\what H_{*\ell}\,P_{*\ell}\succeq 0$ for $1\le \ell\le m$.

  \item In item~{\rm (b)}, if, for $1\le \ell\le m$,
        \begin{equation}\label{eq:full-rank2}
        \rank(H_{*\ell}P_{*\ell})=k_{\ell},
        \end{equation}
        then $\what P_{*\ell}=P_{*\ell}$ for $1\le \ell\le m-1$, leading to \eqref{eq:TD:KKTatMAX'-HOOI'}
        with all $\Lambda_{*\ell}=P_{*\ell}^{\HH} H_{*\ell}\,P_{*\ell}\succ 0$ for $1\le \ell\le m-1$ and $\Lambda_{*m}\succeq 0$.
  \item If $(P_{*1},P_{*2},\ldots,P_{*m})$ is an isolated accumulation point of $\{(P_1^{(j)},P_2^{(j)},\ldots,P_m^{(j)})\}_{j=0}^{\infty}$
        and if, besides \eqref{eq:full-rank2} for $1\le \ell\le m-1$, also
        \begin{equation}\label{eq:full-rank3}
        \rank(H_{*m}P_{*m})=k_m,
        \end{equation}
        then the entire sequence $\{(P_1^{(j)},P_2^{(j)},\ldots,P_m^{(j)})\}_{j=0}^{\infty}$ converges to $(P_{*1},P_{*2},\ldots,P_{*m})$.

  \item We have $2m$ convergent series, for $1\le \ell\le m$,
        \begin{subequations}\label{eq:cvg4ASI:TD:series}
        \begin{align}
        \sum_{j=0}^{\infty}\sigma_{\min}(H_{\ell}^{(j)}P_{\ell}^{(j)})\,
                         \big\|\sin\Theta\big(\cR(P_{\ell}^{(j+1)}),\cR(P_{\ell}^{(j)})\big)\big\|_{\F}^2
                      &<\infty,    \label{eq:cvg4ASI:TD:series-1} \\
        \sum_{j=0}^{\infty}\sigma_{\min}(H_{\ell}^{(j)}P_{\ell}^{(j)})\,
                  \frac {\big\|H_{\ell}^{(j)}P_{\ell}^{(j)}-P_{\ell}^{(j)}\Lambda_{\ell}^{(j)}\big\|_{\F}^2}
                        {\big\|H_{\ell}^{(j)}\big\|_{\F}^2}
                      &<\infty,              \label{eq:cvg4ASI:TD:series-2}
        \end{align}
        \end{subequations}
        where $\Lambda_{\ell}^{(j)}=\big[P_{\ell}^{(j)}\big]^{\HH}H_{\ell}^{(j)}P_{\ell}^{(j)}\in\bbC^{k_{\ell}\times k_{\ell}}$
\end{enumerate}
\end{theorem}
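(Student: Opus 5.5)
The proof follows that of \Cref{thm:cvg4HOOI}, with \Cref{lm:polar2max,lm:polar2max'} replacing \Cref{lm:maxtrace-proj}. The key observation is that, with $G:=H_{\ell}^{(j)}P_{\ell}^{(j)}$ and $H_{\ell}^{(j)}\succeq 0$, we have for any $P\in\STM{k_{\ell}}{n_{\ell}}$
$$
\tr(P^{\HH}H_{\ell}^{(j)}P)-\tr\big(\big[P_{\ell}^{(j)}\big]^{\HH}H_{\ell}^{(j)}P_{\ell}^{(j)}\big)
\ge 2\,\Re\tr\big(P^{\HH}G\big)-2\tr\big(\big[P_{\ell}^{(j)}\big]^{\HH}G\big),
$$
since $(P-P_{\ell}^{(j)})^{\HH}H_{\ell}^{(j)}(P-P_{\ell}^{(j)})\succeq 0$. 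Take $P=P_{\ell}^{(j+1)}$, the orthonormal polar factor of $G$. By \Cref{lm:polar2max}, the right-hand side equals $2\eta_{\ell}^{(j)}$, where
$$
\eta_{\ell}^{(j)}:=\|G\|_{\tr}-\tr\big(\big[P_{\ell}^{(j)}\big]^{\HH}G\big)\ge 0 .
$$
Hence each partial update in the Gauss--Seidel flow \eqref{eq:flow-GS} increases $f$ by at least $2\eta_{\ell}^{(j)}$, because of \eqref{eq:obj-TD}. Summing over $\ell$ gives monotonicity. Since $f\le\|B\|_{\F}^2$ is bounded, this proves (a). It also yields
$$
2\sum_{j}\sum_{\ell}\eta_{\ell}^{(j)}\le \lim_{j\to\infty} f_j-f_0<\infty .
$$

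For (e), apply \Cref{lm:polar2max'} to $G$ and $P=P_{\ell}^{(j)}$. When $\rank(G)=k_{\ell}$ this gives
$$
\sigma_{\min}(G)\,\|\sin\Theta(\cR(P_{\ell}^{(j)}),\cR(P_{\ell}^{(j+1)}))\|_{\F}^2\le 2\eta_{\ell}^{(j)} ;
$$
if $\sigma_{\min}(G)=0$ the term vanishes. Summing gives \eqref{eq:cvg4ASI:TD:series-1}. For \eqref{eq:cvg4ASI:TD:series-2}, use the first inequality in \eqref{eq:polar2max}, namely
$$
\|G-P_{\ell}^{(j)}\Lambda_{\ell}^{(j)}\|_{\F}\le\|G\|_2\,\|\sin\Theta\|_{\F},
$$
together with $\|G\|_2\le\|H_{\ell}^{(j)}\|_{\F}$.

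For (b), argue by contradiction exactly as in the HOOI proof. Set
$$
\delta_{\ell}:=\|\what H_{*\ell}P_{*\ell}\|_{\tr}-\tr(P_{*\ell}^{\HH}\what H_{*\ell}P_{*\ell})\ge 0 .
$$
If some $\delta_{\ell}>0$, continuity of $\|\cdot\|_{\tr}$ and of the traces along $\bbI$ (with $\bbI$ refined so that $P_{\ell}^{(j+1)}\to\what P_{*\ell}$) gives an index $j_0$ with $f_{j_0+1}\ge f_{j_0}+2\sum_{\ell}\eta_{\ell}^{(j_0)}>f_*$, a contradiction. So all $\delta_{\ell}=0$. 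By \Cref{lm:polar2max}(b), $\what H_{*\ell}P_{*\ell}=P_{*\ell}\Lambda_{*\ell}$ with $\Lambda_{*\ell}\succeq0$, which is (b). The index bookkeeping matters here: $\what H_{*\ell}$ is the limit of $H_{\ell}^{(j)}$ along $\bbI$.

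For (c), proceed by induction on $\ell$. Along $\bbI$, $H_{1}^{(j)}P_{1}^{(j)}\to H_{*1}P_{*1}$, which has full rank by \eqref{eq:full-rank2}. The orthonormal polar factor is continuous at full-rank matrices, so $P_1^{(j+1)}\to$ the unique polar factor of $H_{*1}P_{*1}$. By (b) that factor is $P_{*1}$, so $\what P_{*1}=P_{*1}$. Then $\what H_{*2}=H_{*2}$, and the same argument gives $\what P_{*2}=P_{*2}$, and so on up to $\ell=m-1$. Now \eqref{eq:TD:KKTatMAX'-HOOI'} follows from (b). Full rank gives $\Lambda_{*\ell}\succ0$ for $\ell\le m-1$, while only $\Lambda_{*m}\succeq0$ is asserted.

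For (d), use \Cref{lm:isolatedconvg}. Given any subsequence converging to $(P_{*1},\ldots,P_{*m})$, the continuity argument of (c), now including $\ell=m$ via \eqref{eq:full-rank3}, shows that $P_{\ell}^{(j+1)}\to P_{*\ell}$ for all $\ell$ along it. Hence the distance between consecutive iterates tends to $0$ along that subsequence, and the lemma gives convergence of the whole sequence. No alignment is needed, because the polar factor is unique under full rank.

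The main obstacle is the initial inequality linking the increase of $f$ to $\eta_{\ell}^{(j)}$, which must hold when $H_{\ell}^{(j)}$ is only positive semidefinite. I expect to settle it with the quadratic expansion above, using $\Re\tr(P^{\HH}G)=\tr(P^{\HH}G)$ at the polar factor. The second delicate point is the continuity of the polar factor, which is where the rank conditions are needed.
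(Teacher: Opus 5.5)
Your proposal is correct and follows the paper's route. The paper obtains \Cref{thm:cvg4ASI} as a corollary of the NPDo convergence theorem in its companion work, whose proof follows the proof of \Cref{thm:cvg4HOOI} with \Cref{lm:polar2max,lm:polar2max'} replacing \Cref{lm:maxtrace-proj}, just as you do. Your inequality $\tr(P^{\HH}HP)-\tr(P_0^{\HH}HP_0)\ge 2\,\Re\tr(P^{\HH}HP_0)-2\tr(P_0^{\HH}HP_0)$ for $H\succeq 0$ is the ingredient that makes this specialization work, and your full-rank continuity and uniqueness arguments for (c) and (d) are sound.
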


As a corollary of Theorem~\ref{thm:cvg4ASI}(e),
if $\sigma_{\min}(H_{\ell}^{(j)}P_{\ell}^{(j)})$ is eventually bounded below away from $0$
uniformly,
then
\begin{equation}\label{eq:NEPv-always'}
\lim_{j\to\infty}\frac {\big\|H_{\ell}^{(j)}P_{\ell}^{(j)}-P_{\ell}^{(j)}\Lambda_{\ell}^{(j)}\big\|_{\F}}
                        {\big\|H_{\ell}^{(j)}\big\|_{\F}} =0,
\end{equation}
namely, increasingly $H_{\ell}^{(j)}P_{\ell}^{(j)}\approx P_{\ell}^{(j)}\Lambda_{\ell}^{(j)}$
towards a polar decomposition of $H_{\ell}^{(j)}$. This
means that $(P_1^{(j)},P_2^{(j)},\ldots,P_m^{(j)})$ becomes a more and more accurate approximate solution
to the KKT condition \eqref{eq:KKT-TD},
even in the absence of knowing whether the entire sequence $\{(P_1^{(j)},P_2^{(j)},\ldots,P_m^{(j)})\}_{j=0}^{\infty}$ converges or not.

Comparing \Cref{thm:cvg4ASI} with \Cref{thm:cvg4HOOI}, we point out two subtle differences:
\begin{enumerate}[(1)]
  \item Although items (b,c) of both theorems claim \eqref{eq:TD:KKTatMAX'-HOOI} and \eqref{eq:TD:KKTatMAX'-HOOI'},
        it is only claimed that all $\Lambda_{*\ell}\succeq 0$ $(\succ 0$) in \Cref{thm:cvg4ASI}(b,c)
        whereas in \Cref{thm:cvg4HOOI}(b,c) each $\eig(\Lambda_{*\ell})$ consists of $k_{\ell}$ largest
        eigenvalues of $\what H_{*\ell}\succeq 0$ or $H_{*\ell}\succ 0$, which is stronger than simply $\Lambda_{*\ell}\succeq 0$, that holds by default
        because $\what H_{*\ell}\succeq 0$ and $H_{*\ell}\succ 0$ always;
  \item There is no need to perform an alignment between $P_{\ell}^{(j)}$ and $P_{*\ell}$ in \Cref{thm:cvg4ASI}(d) but there is one in \Cref{thm:cvg4HOOI}(d). This is due to the fact that the orthonormal polar factor is
      unique under the full column-rank assumption.
\end{enumerate}

\begin{remark}\label{rk:ASI-cvg}
Earlier, we pointed out
Kroonenberg and De Leeuw~\cite{krdl:1980} implicitly used orthonormal polar factor in the form \eqref{eq:krdl1980}
(for the case $m=3$) but
without mentioning the term. They also numerically realize \eqref{eq:krdl1980}  through
the eigendecomposition of $\big[P_{\ell}^{(j)}\big]^{\HH}\big[H_{\ell}^{(j)}\big]^2P_{\ell}^{(j)}$.
It can be seen that $P_{\ell}^{(j+1)}$ as in \eqref{eq:krdl1980}
is indeed the orthonormal polar factor of $H_{\ell}^{(j)}P_{\ell}^{(j)}$, and hence the resulting method
is essentially the same method as \Cref{alg:ASI} but can encounter subtle numerical issues
when $\big[P_{\ell}^{(j)}\big]^{\HH}\big[H_{\ell}^{(j)}\big]^2P_{\ell}^{(j)}$ is singular or nearly singular. What we did in \Cref{alg:ASI} is to compute
orthonormal polar factor via thin SVD, completely getting around any numerical singularity issue.
Kroonenberg and De Leeuw~\cite{krdl:1980} conducted their convergence analysis with \eqref{eq:krdl1980} for updating and their conclusions are essentially the ones in \Cref{thm:cvg4ASI}(a,d) under the conditions that all involved matrices $H_{\ell}^{(j)}P_{\ell}^{(j)}$ have full column ranks, which would be satisfied most likely in actual computation
for large scale tensors as often $k_{\ell}\ll n_{\ell}$.
However, carefully examining their proofs reveals
that they actually needed to assume all $H_{\ell}^{(j)}$ to be positive definite, not just each $H_{\ell}^{(j)}P_{\ell}^{(j)}$ having full column rank,
because they used the Cauchy-Schwarz inequality with respect to the
$H_{\ell}^{(j)}$-inner product and hence they needed the positive definiteness of $H_{\ell}^{(j)}$
in order to apply the result of \cite{deso:1959}.
A necessary
condition for $H_{\ell}^{(j)}\succ 0$ is $n_{\ell}\le k_{i_1}\cdots k_{i_{m-1}}$, which is problematic for large scale tensor. To see that, one needs to look at no further
than the extreme case: all $k_{\ell}=1$ and then $\rank(H_{\ell}^{(j)})=1$ always
and hence $H_{\ell}^{(j)}$ is singular if $n_{\ell}>1$.
Lastly, \Cref{thm:cvg4ASI} does not distinguish real or complex tensors and contains much deeper results, for example,
the convergence series in item (d).
\end{remark}


\section{Numerical Experiments}\label{sec:egs}
In this section, we will report some numerical tests to illustrate the performance of both HOOI and ASI. We are especially
interested in the situations where
$$
k_{\ell}\le k_{i_1}\cdots k_{i_{m-1}}\ll n_{\ell}.
$$
As we commented earlier for \Cref{alg:HOOI}, the first condition makes sure that $C_{\ell}^{(j)}$ has a nontrivial left singular subspace of dimension $k_{\ell}$ while the second condition makes MATLAB's {\tt svd} is a suitable choice
to compute  $P_{\ell}^{(j+1)}$. If, however, $k_{i_1}\cdots k_{i_{m-1}}\not\ll n_{\ell}$ and $n_{\ell}$ is large (in thousands
or larger), some iterative methods may have to be called.

As $m>3$ introduces no essentially difference in theory and experiments, except perhaps more computational work, we will use $3$-mode tensor in our experiments, i.e., $m=3$.
We will start with the Miranda scientifc similation tensor
\cite{bako:2025} and then more extensively with randomly generated tensors that nearly fit into the TD model, in the sense of \eqref{eq:hatB}.

Methods included in our experiments are HOOI (\Cref{alg:HOOI}) and ASI (\Cref{alg:ASI}) with the same random initial guess
or with one by truncated HOSVD \cite{dldv:2000a} that essentially Tucker~\cite{tuck:1966} used as the solution to
the decomposition that bears his name nowadays. Our random initial guess is generated as, for $1\le\ell\le m$,
\begin{equation}\label{eq:init(random)}
P_{\ell}={\tt orth}({\tt randn}(n_{\ell},k_{\ell}))
\quad\mbox{or}\quad{\tt orth}({\tt randn}(n_{\ell},k_{\ell})+{\tt 1i*randn}(n_{\ell},k_{\ell})),
\end{equation}
dependent on real or complex tensors, where {\tt randn} and {\tt orth} are two MATLAB functions that
return a random matrix and an orthonormal matrix, respectively. The initial guess by truncated HOSVD is obtained by, for $1\le\ell\le m$,
\begin{equation}\label{eq:init(HOSVD)}
\mbox{economic SVD:}\,\, B_{\ufd,\ell}=U_{\ell}\Sigma_{\ell} V_{\ell}^{\HH}, \quad
P_{\ell}=[U_{\ell}]_{(:,1:k_{\ell})},
\end{equation}
where $[U_{\ell}]_{(:,1:k_{\ell})}$ refers to columns 1 to $k_{\ell}$ of $U_{\ell}$, which can also, and has to, be computed
iteratively if both $n_{\ell}$ and $N/n_{\ell}$ are large (in thousands or larger).
We also refer such $(P_1,\ldots,P_m)$ as the TD solution by HOSVD.

All experiments are carried out within the MATLAB environment (MATLAB R2022) on a Dell Precision 3660 desktop with an
Intel i9 processor (3200 Mhz), 32 GB memory, running Microsoft Windows 11 Enterprise.

\begin{figure}[t]
{\centering
\begin{tabular}{ccc}
  \resizebox*{0.31\textwidth}{0.17\textheight}{\includegraphics{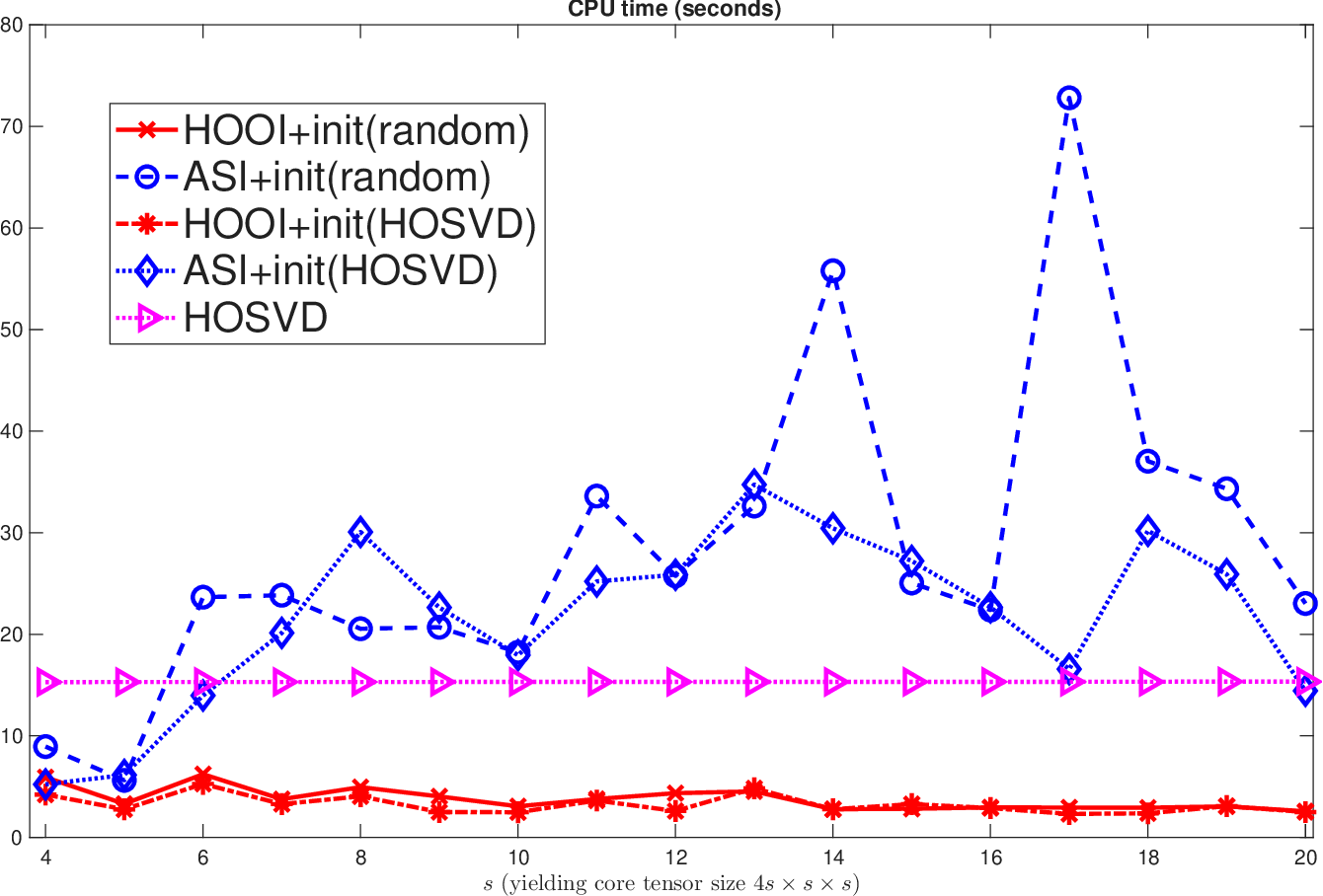}}
  &\resizebox*{0.31\textwidth}{0.17\textheight}{\includegraphics{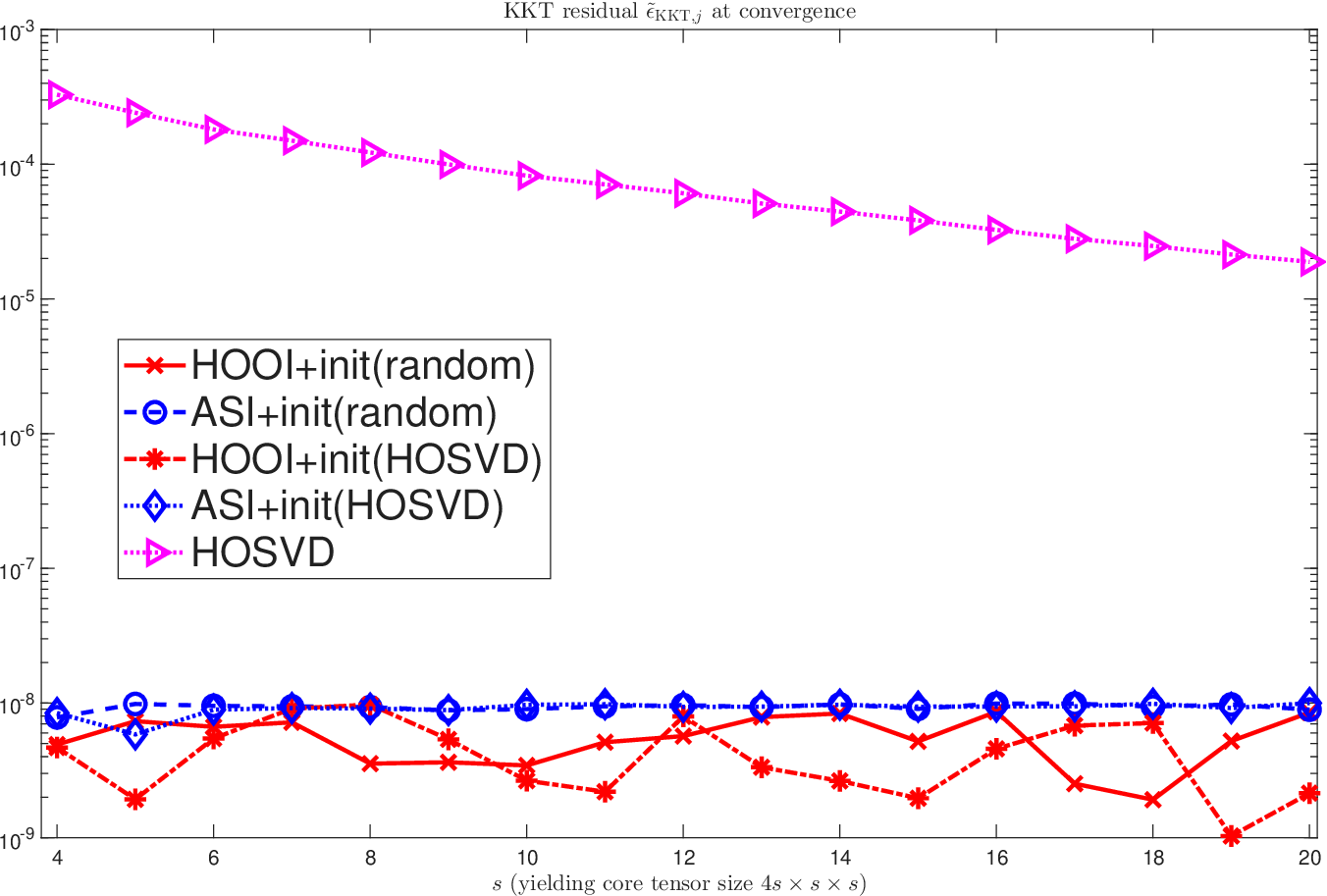}}
  &\resizebox*{0.31\textwidth}{0.17\textheight}{\includegraphics{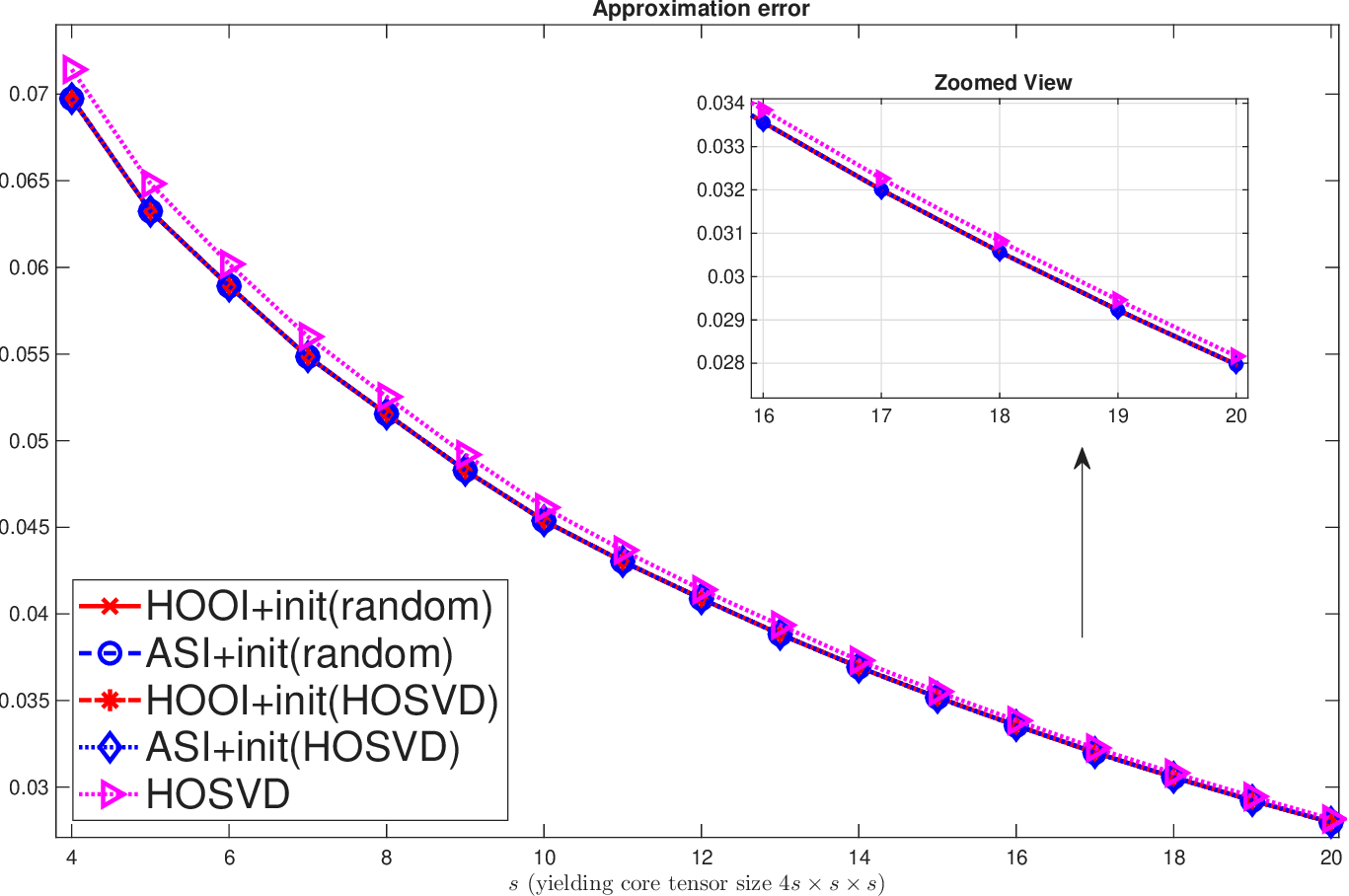}}
\end{tabular}\par
}
\vspace{-0.15 cm}
\caption{\small Performance statistics on the Miranda tensor:
  core tensor size $4s\times s\times s$ for $4\le s\le 20$.
  {\em Left:} CPU time; {\em Middle:}  KKT residual $\tilde\epsilon_{\KKT,j}$ at convergence;
  {\em Right:} approximation error.
  By ``init(random)'', it means that random initial guess
  \eqref{eq:init(random)} is used and similarly ``init(HOSVD)'' means initial guess by \eqref{eq:init(HOSVD)}.
  The CPU time for ``HOOI+init(HOSVD)'' and ``ASI+init(HOSVD)'' does not include the time used for computing initial guess by HOSVD.
  }
\label{fig:Miranda:stat}
\end{figure}

\begin{figure}
{\centering
\begin{tabular}{ccc}
  \resizebox*{0.31\textwidth}{0.17\textheight}{\includegraphics{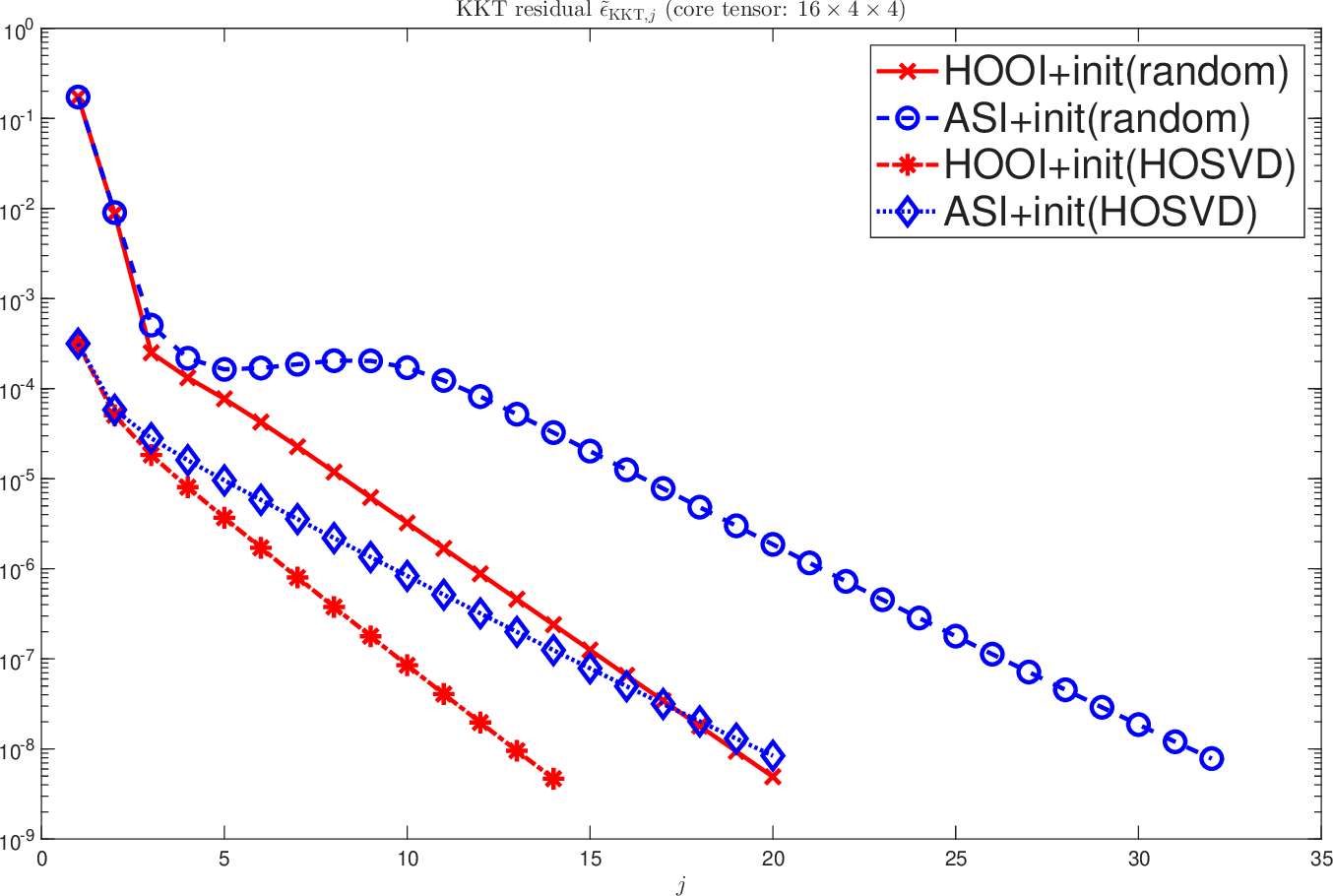}}
  &\resizebox*{0.31\textwidth}{0.17\textheight}{\includegraphics{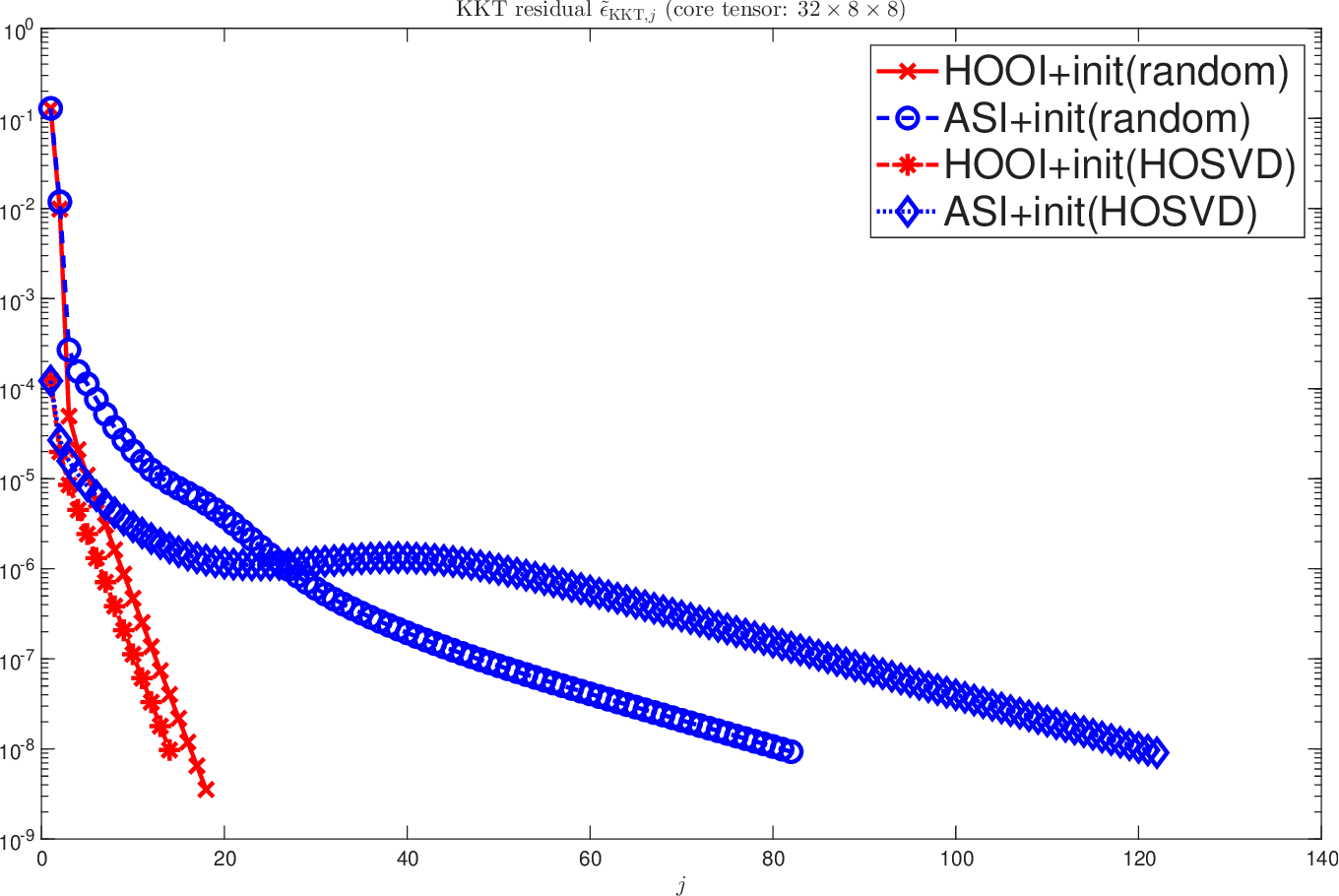}}
  &\resizebox*{0.31\textwidth}{0.17\textheight}{\includegraphics{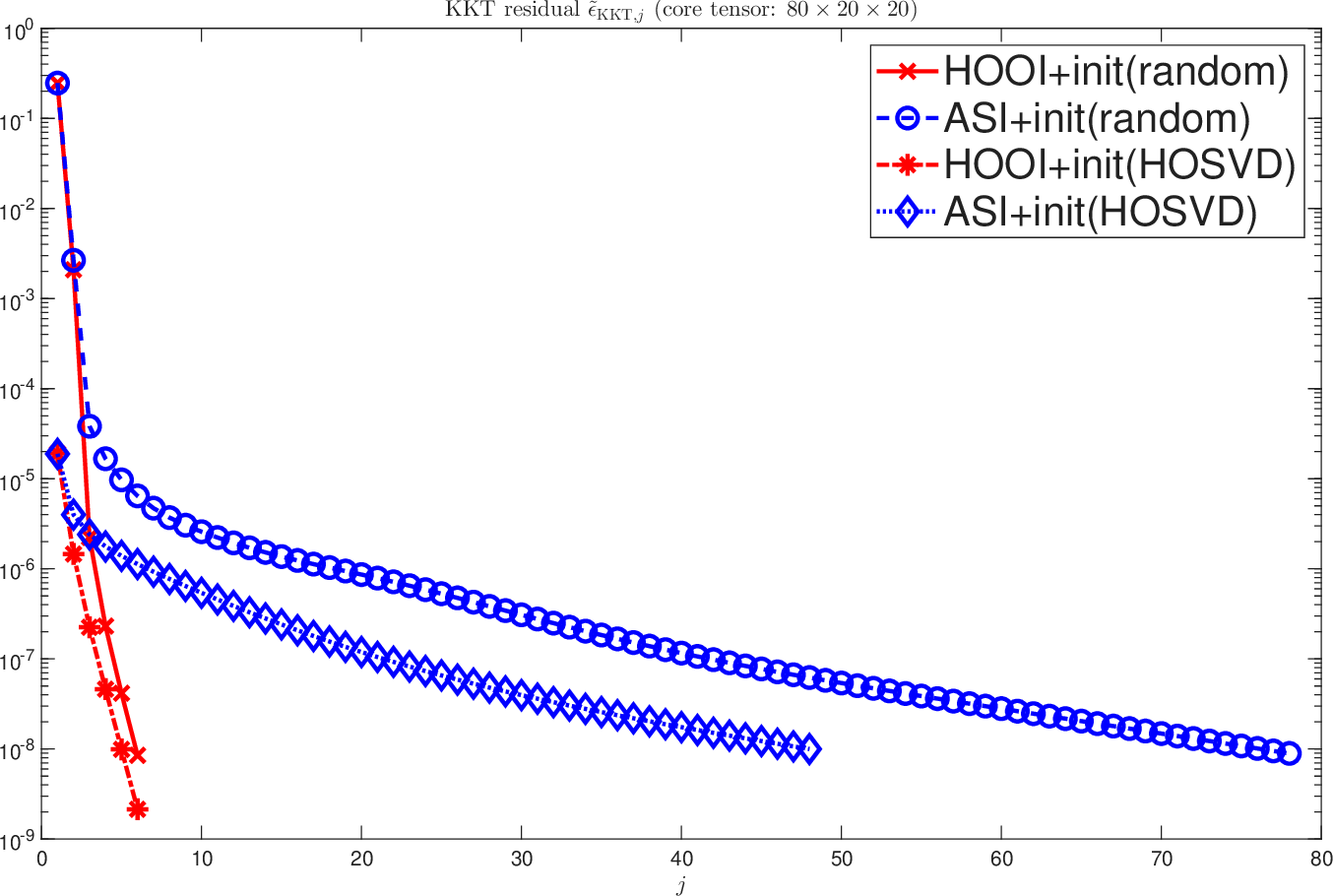}}
\end{tabular}\par
}
\vspace{-0.15 cm}
\caption{\small Convergence behavior in terms of normalized KKT residual $\tilde\epsilon_{\KKT,j}$ on the Miranda tensor:
  core tensor size $4s\times s\times s$ for $s=4, 8, 20$. It is interesting to notice that for $s=8$,
  ``ASI+init(HOSVD)'' starts out with a much better initial guess than ``ASI+init(random)'', but
  is overtaken by the latter at iteration 27 and ends up with taking 82 iterations while
  ``ASI+init(random)'' takes 122 iterations.
  }
\label{fig:Miranda:cvg}
\end{figure}

\subsection{Miranda Scientifc Similation Tensor}
The Miranda scientifc similation tensor $B$ is a real tensor of size $2048\times 256\times 256$ and was used in \cite{bako:2025} for illustrations. It is available online and its detail can be found \cite{bako:2025}. Given its
$n_1=2048=8n_2=8n_3$, we will attempt to compute its optimal TD with core tensor size $4s\times s\times s$
with $s$ varying from $4$ to $20$. In \Cref{fig:Miranda:stat}, we show the statistics of CPU time, final  KKT residual $\tilde\epsilon_{\KKT,j}$,
and approximation error defined by
\begin{equation}\label{eq:approxerr}
\frac {\|B-\what B\|_{\F}}{\|B\|_{\F}}
\end{equation}
as $s$ varies from 4 to 20, where $\what B$ is a computed TD in the form of \eqref{eq:hatB}. We observed the following:
\begin{enumerate}[(1)]
  \item KKT residual $\tilde\epsilon_{\KKT,j}$ reaches $\epsilon_2=10^{-8}$ in \eqref{eq:KKT-stop'}
        for both HOOI and ASI, while it stays at an elevated level for truncated HOSVD, meaning
        the solution simply by HOSVD is far from minimizing approximation error \eqref{eq:approxerr},
        as it is clear from the third plot in \Cref{fig:Miranda:stat}.
  \item CPU time by HOSVD stays constant because it is done by calling MATLAB's economic SVD \eqref{eq:init(HOSVD)} separately for each unfolding matrix.
  \item HOOI is faster than ASI, regardless initial guess -- randomly generated or by HOSVD.
        In comparison to ASI, HOOI computes each approximation to $P_{\ell}$ optimally
        by the eigen-decomposition of $H_{\ell}=C_{\ell}C_{\ell}^{\HH}$ in \eqref{eq:Hi} whereas ASI essentially does one step of
        the power iteration with $H_{\ell}$; in comparison to HOSVD, each $C_{\ell}$ is much skinnier
        than the unfolding matrix $B_{\ufd,\ell}$ due to $k_{\ell}\ll n_{\ell}$ and thus
        computing the economic SVD in ASI
        is so much faster than the latter.
  \item In general, initial guess by HOSVD
        is a quality one, but it may not necessarily lead to a fewer number of iterations occasionally. We can read a few cases of such things for ASI from the first plot in \Cref{fig:Miranda:stat}, e.g., $s=8$ (see also the second plot in  \Cref{fig:Miranda:cvg}).    Also, TD simply by truncated HOSVD is far from optimal as we commented moments ago.
\end{enumerate}
Next, \Cref{fig:Miranda:cvg} plots the convergence behaviors by the methods in terms of
the normalized KKT residual $\tilde\epsilon_{\KKT,j}$ for $s=4, 8, 20$. These behaviors
are selected to be representative: HOOI takes the fewer numbers of iterations and using the initial by the truncated HOSVD in general cuts down the number of iterations by both HOOI and ASI. It is noted that, with the initial by HOSVD,
the starting KKT residual $\epsilon_{\KKT,0}$ is much smaller, meaning the initial guess is a quality one.
On the other hand, it is noted from the second plot for $s=8$ in \Cref{fig:Miranda:cvg} that
the initial by HOSVD may not pay off occasionally.

\subsection{Randomly Generated Tensors}
Given $n_{\ell}$ and $k_{\ell}$ for $1\le\ell\le m=3$, we first generate, in MATLAB,
dependent on real or complex tensors,
\begin{alignat*}{2}
T&={\tt zeros}([n_1,n_2,n_3]), && \\
T_{(1:k_1,1:k_2,1:k_3)}&={\tt randn}([k_1,k_2,k_3])\quad
             &&\mbox{or}\quad{\tt randn}([k_1,k_2,k_3])+{\tt 1i*randn}([k_1,k_2,k_3]), \\
E&={\tt randn}([n_1,n_2,n_3])\quad
             &&\mbox{or}\quad {\tt randn}([n_1,n_2,n_3])+{\tt 1i*randn}([n_1,n_2,n_3]),
\end{alignat*}
and, for $1\le\ell\le m=3$,
$$
Q_{\ell}={\tt orth}({\tt randn}(n_{\ell})),
\quad\mbox{or}\quad
  {\tt orth}({\tt randn}(n_{\ell})+{\tt 1i*randn}(n_{\ell})),
$$
and finally
$$
B=(T+\eta\,E)\times_1 Q_1\times_2 Q_2\times_3 Q_3,
$$
where parameter $\eta$ controls how close the generated $B$ to the exact TD model. In particular, for $\eta=0$,
$B=T\times_1 P_1\times_2 P_2\times_3 P_3$ exactly for some $T\in\bbC^{k_1\times k_2\times k_3}$ and
$P_{\ell}\in\STM{k_{\ell}}{n_{\ell}}$ for $1\le\ell\le m=3$.
The same random initial \eqref{eq:init(random)} for each generated $B$ is used throughout.

\begin{figure}[t]
{\centering
\begin{tabular}{c|ccc}
&$\eta=2^{-3}$ & $\eta=2^{-4}$ & $\eta=2^{-5}$ \\  \hline
\multirow{ 2}{*}{\rotatebox{90}{\hspace*{1.2cm}real}}
  &\resizebox*{0.28\textwidth}{0.15\textheight}{\includegraphics{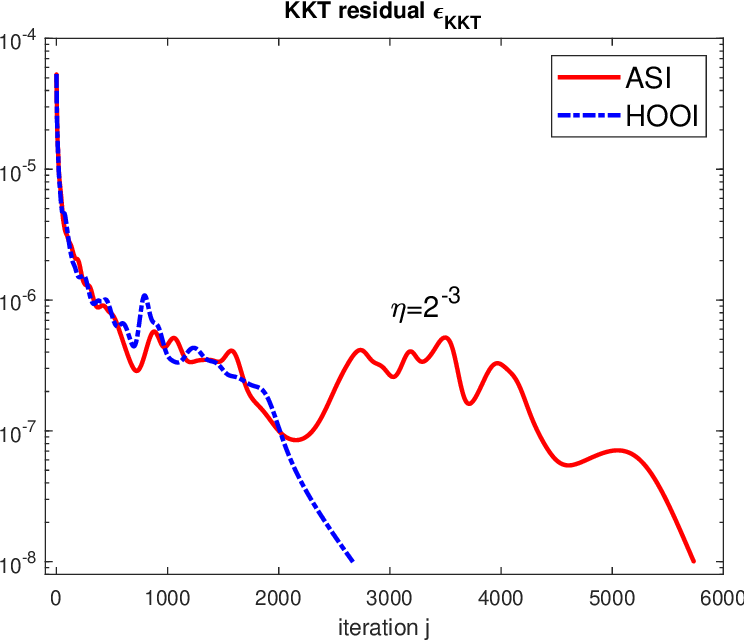}}
  & \resizebox*{0.28\textwidth}{0.15\textheight}{\includegraphics{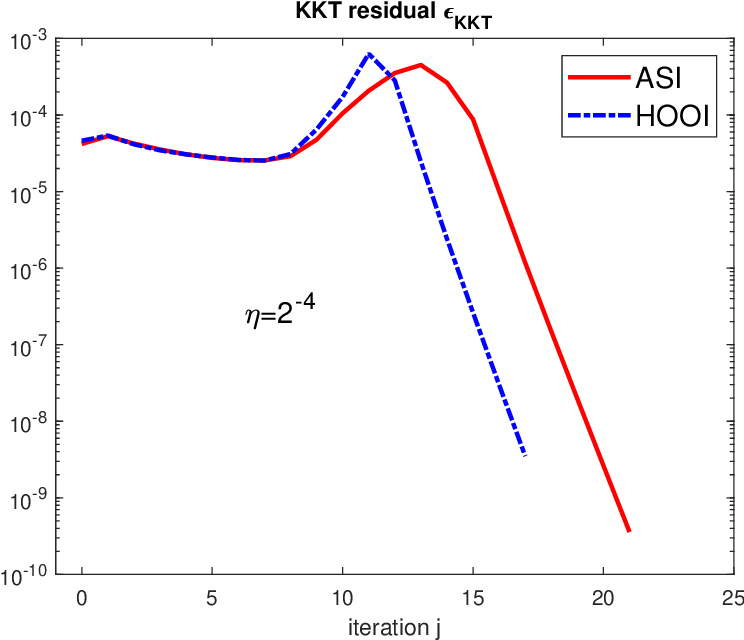}}
  & \resizebox*{0.28\textwidth}{0.15\textheight}{\includegraphics{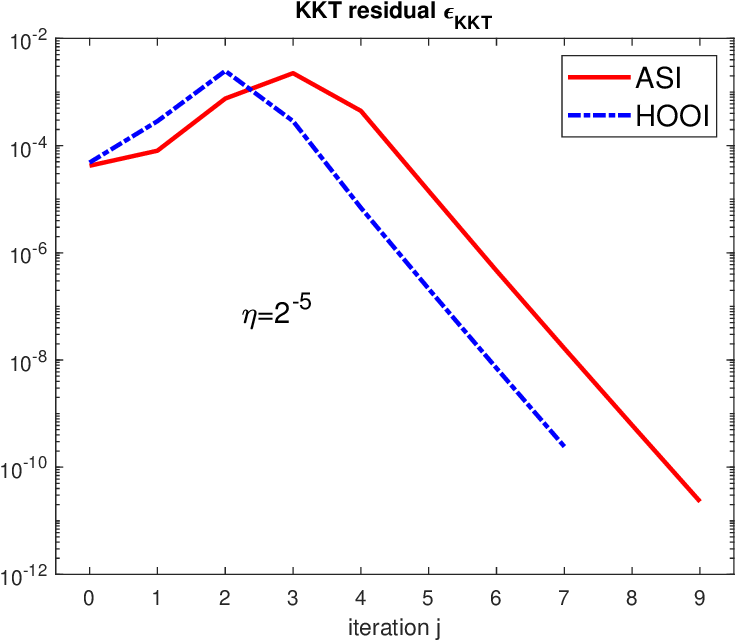}}
      \vphantom{\vrule height94pt width0pt depth0pt}\\
  &\resizebox*{0.28\textwidth}{0.15\textheight}{\includegraphics{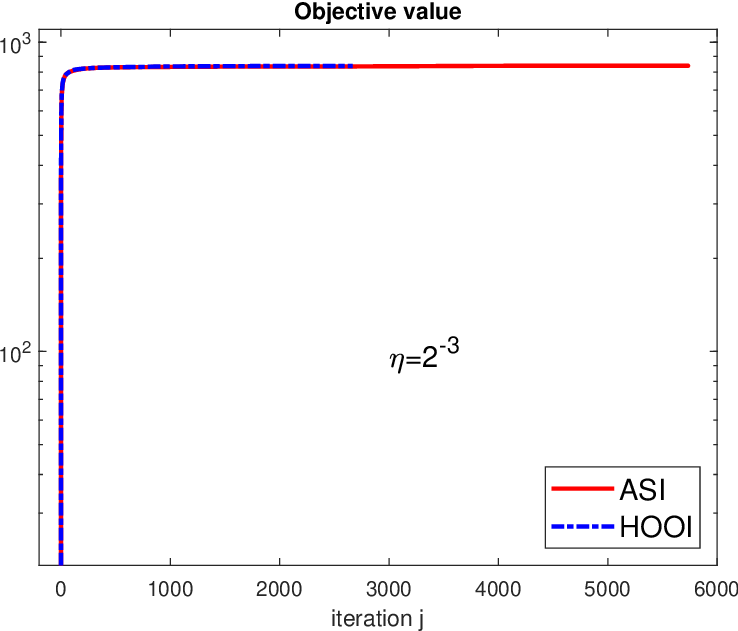}}
  & \resizebox*{0.28\textwidth}{0.15\textheight}{\includegraphics{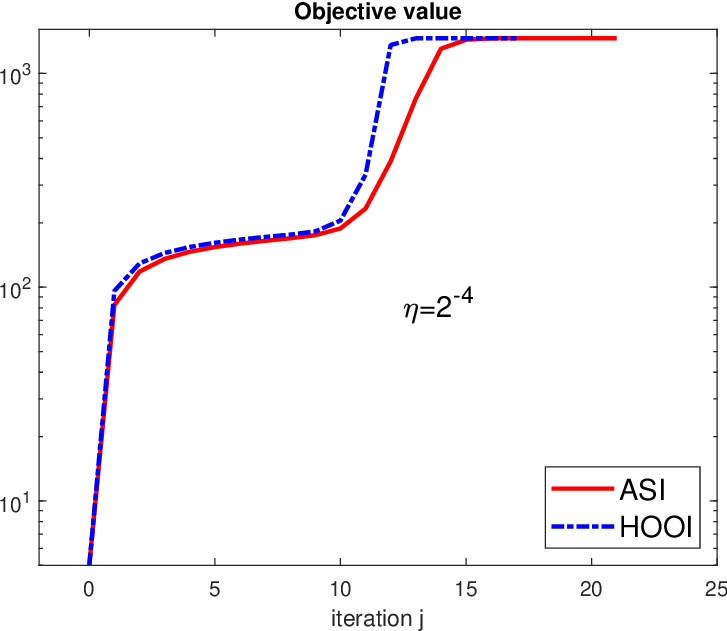}}
  & \resizebox*{0.28\textwidth}{0.15\textheight}{\includegraphics{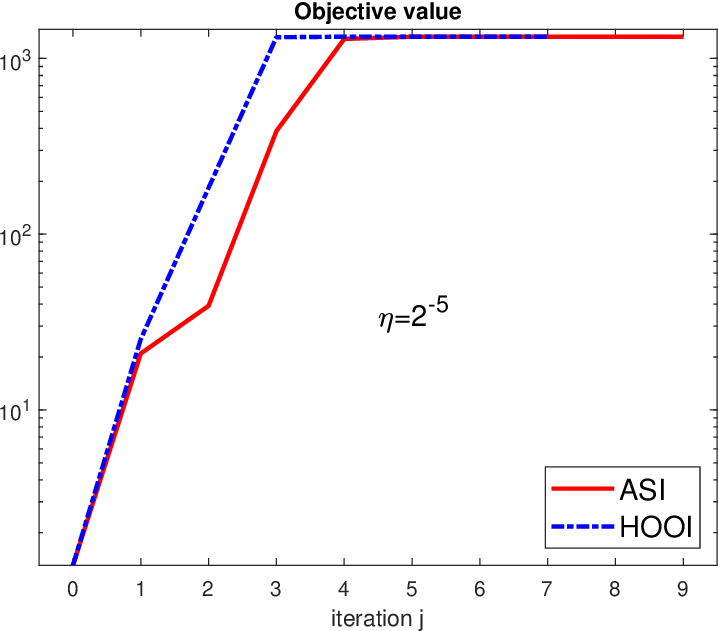}} \\ \hline
\multirow{ 2}{*}{\rotatebox{90}{\hspace*{1.4cm}complex}}
  &\resizebox*{0.28\textwidth}{0.15\textheight}{\includegraphics{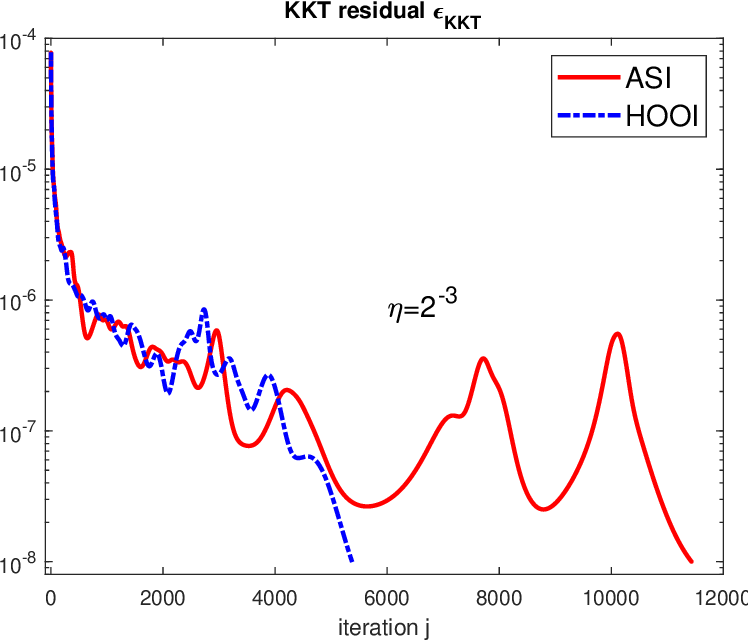}}
  & \resizebox*{0.28\textwidth}{0.15\textheight}{\includegraphics{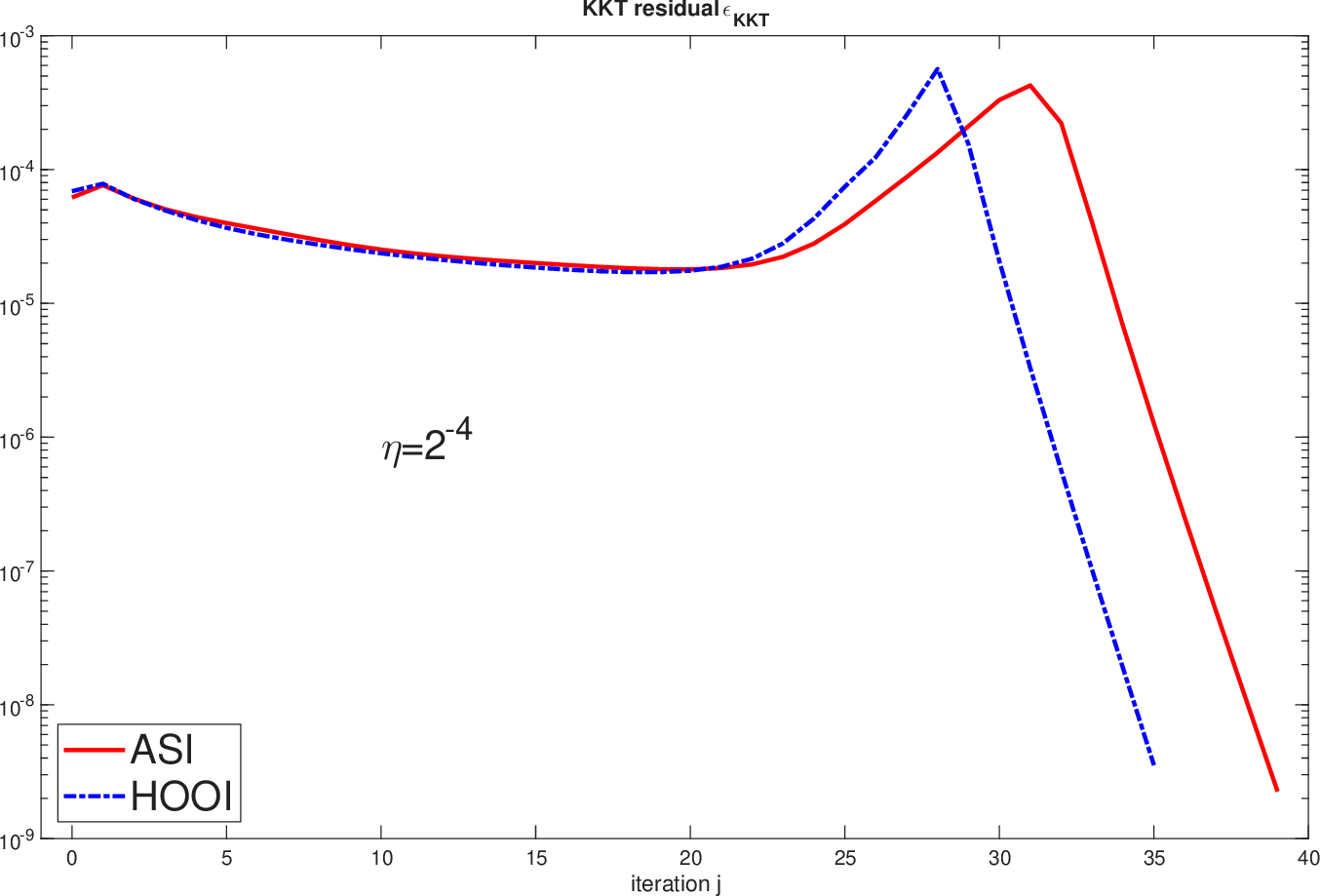}}
  & \resizebox*{0.28\textwidth}{0.15\textheight}{\includegraphics{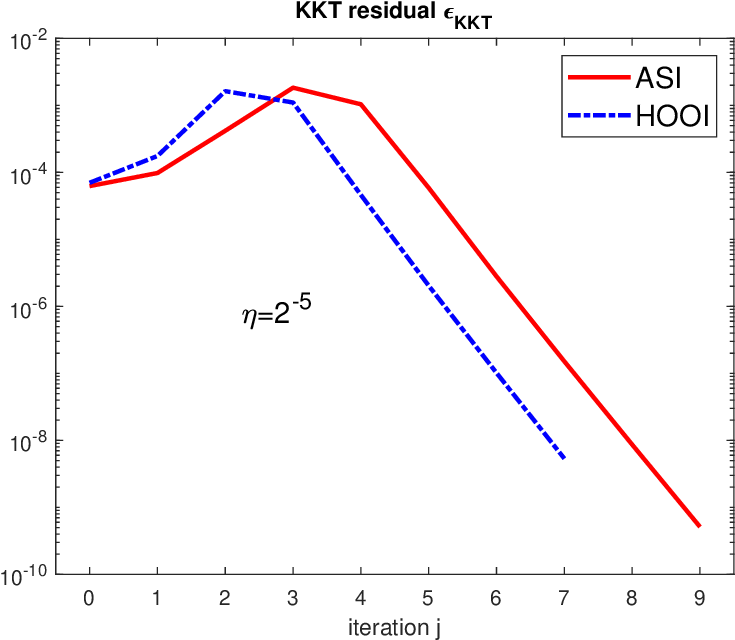}}
      \vphantom{\vrule height94pt width0pt depth0pt}\\
  &\resizebox*{0.28\textwidth}{0.15\textheight}{\includegraphics{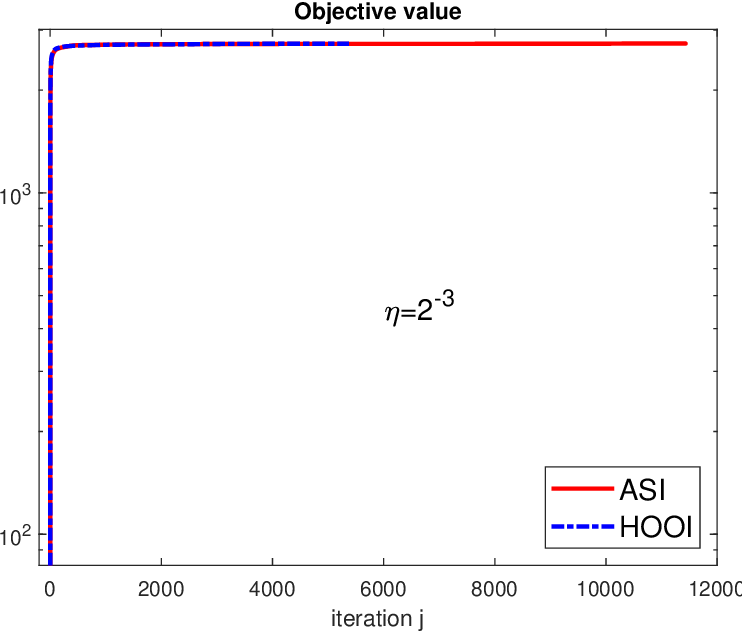}}
  & \resizebox*{0.28\textwidth}{0.15\textheight}{\includegraphics{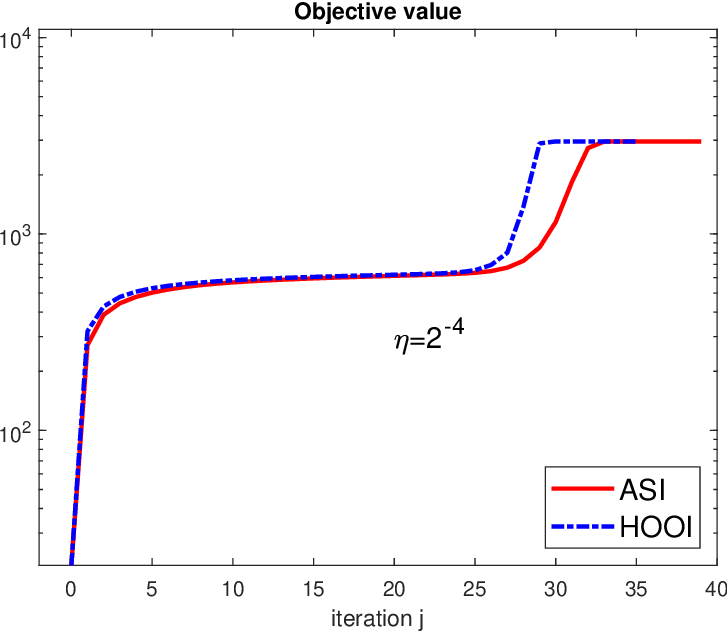}}
  & \resizebox*{0.28\textwidth}{0.15\textheight}{\includegraphics{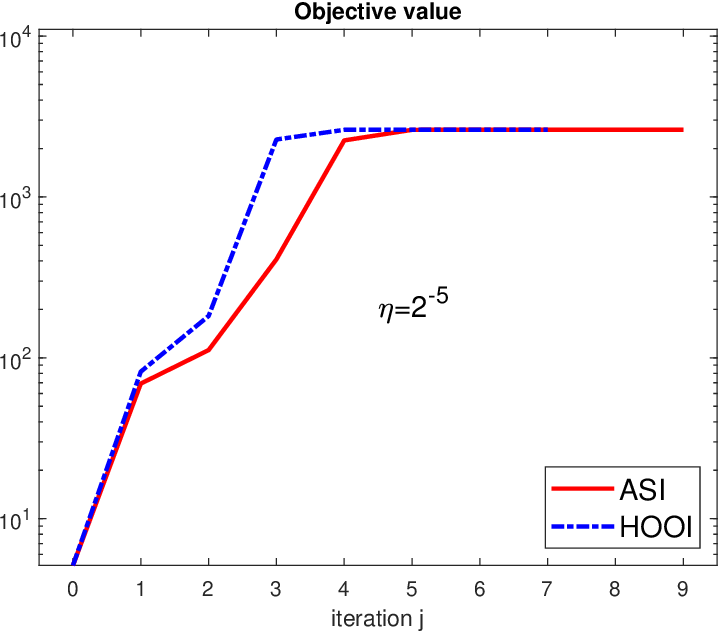}} 
\end{tabular}\par
}
\vspace{-0.15 cm}
\caption{\small Convergence in terms of KKT residual $\tilde\epsilon_{\KKT,j}$ as defined in \eqref{eq:KKT-stop'} and
         objective value on real tensors with $[n_1,n_2,n_3]=[600,   550,   500]$
    and complex tensors with $[n_1,n_2,n_3]=[480,   440,   400]$, and $[k_1,k_2,k_3]=[12,11,10]$,
    as $\eta$ varies in $\{2^{-3},2^{-4}, 2^{-5}\}$. In each case: real or complex, the first row is
    for KKT residual $\tilde\epsilon_{\KKT,j}$ while the second row is for objective value.
  }
\label{fig:behavior-cvg}
\end{figure}

\subsubsection{Convergence behavior}
We randomly generate 8 real and complex tensors, respectively, as $\eta$ varies from $2^{-3}$ down to $2^{-5}$ and save them for experiments. Their dimensions are
$$
\mbox{real:}\quad [n_1,n_2,n_3]=[600,   550,   500],
\quad\mbox{and}\,\,\mbox{complex:}\quad [n_1,n_2,n_3]=[480,   440,   400]
$$
and $[k_1,k_2,k_3]=[12,11,10]$. These size parameters almost reach the limit of MATLAB's {\tt mat} file data format for saving.
Without saving a tensor after its generation, we can go for larger sizes as we will do in the next subsection.

\Cref{fig:behavior-cvg} displays the convergence history in terms of KKT residuals $\tilde\epsilon_{\KKT,j}$ as defined in \eqref{eq:KKT-stop'}
and objective values against the iteration index $j$. We observed the following:
\begin{enumerate}[(1)]
  \item The objective value monotonically increases and eventually flats out at convergence.
  \item The overall trend of the KKT residual $\tilde\epsilon_{\KKT,j}$ is moving towards $0$, although it is not monotonic.
  \item The underlying problem of computing optimal approximate TD gets easier as $\eta$ decreases, in terms of rapid convergence
        toward a (local) optima.
  \item HOOI turns to take fewer iterations than ASI. This can be conceivably explained because at each sub-step
        on $H_{\ell}^{(j)}$,  HOOI  computes the best approximation from its top $k_{\ell}$ eigenvectors whereas
        ASI does one step of the subspace (power) iteration.
  \item It is interesting to note that, for both the real and complex cases, at $\eta=2^{-4}$, just when the objective value
        begins to settle down, as if convergence has already occurred, it moves sharply up to the next level. This legitimizes
        our earlier worry that the stopping criterion \eqref{eq:obj-stop} may end the computation prematurely.
\end{enumerate}
We also tested with $\eta=2^{-1}$ and $2^{-2}$. Similarly behaviors as for $\eta=2^{-3}$ are observed, except more iterations
and thus taking (much) longer to finish.

\subsubsection{Scalability}
\begin{figure}[t]
{\centering
\begin{tabular}{cccc}
\rotatebox{90}{\hspace*{1.2cm}$\eta=2^{-3}$}
  &\resizebox*{0.28\textwidth}{0.15\textheight}{\includegraphics{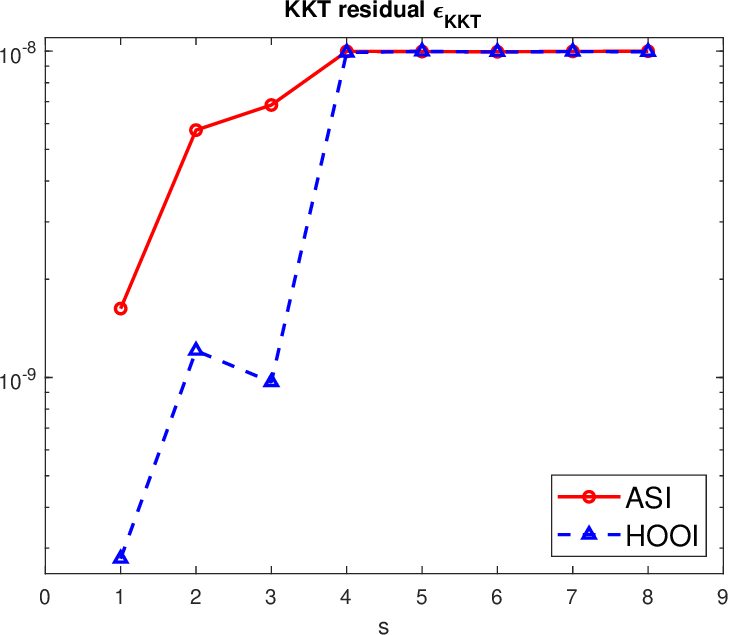}}
  &\resizebox*{0.28\textwidth}{0.15\textheight}{\includegraphics{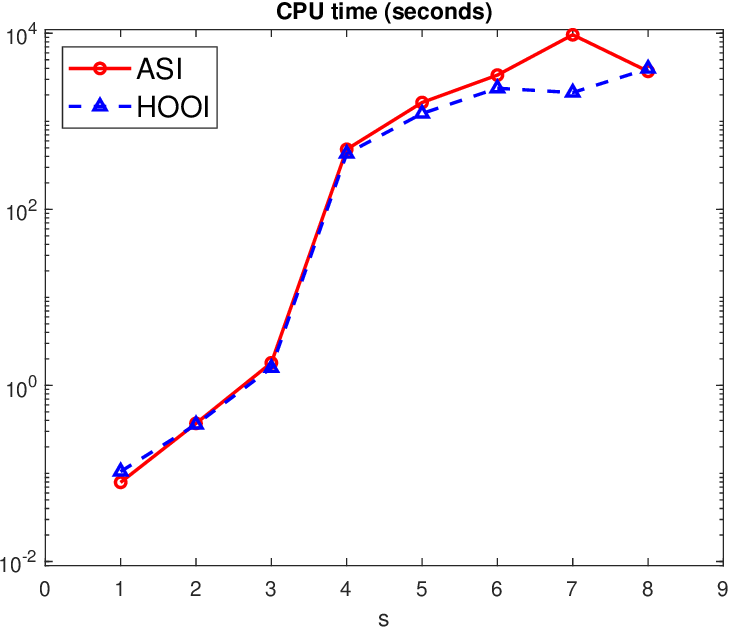}}
  &\resizebox*{0.28\textwidth}{0.15\textheight}{\includegraphics{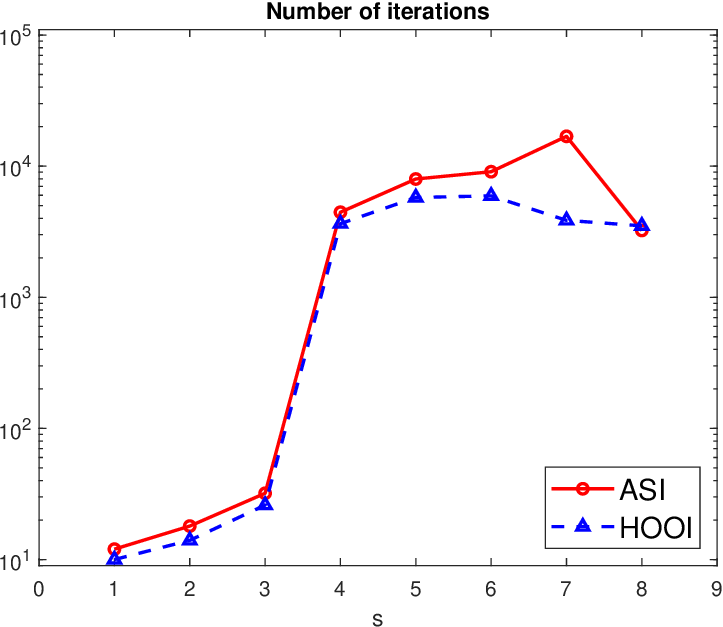}} \\
%
\rotatebox{90}{\hspace*{1.2cm}$\eta=2^{-4}$}
  &\resizebox*{0.28\textwidth}{0.15\textheight}{\includegraphics{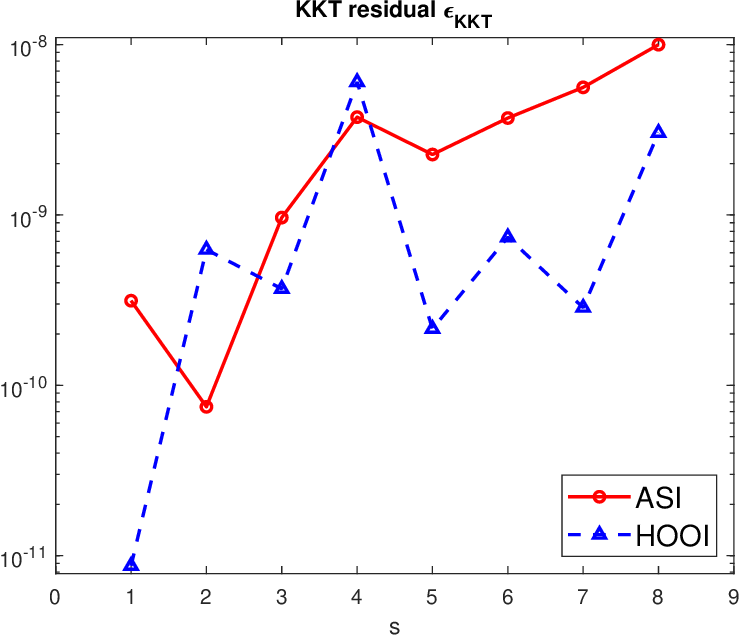}}
  &\resizebox*{0.28\textwidth}{0.15\textheight}{\includegraphics{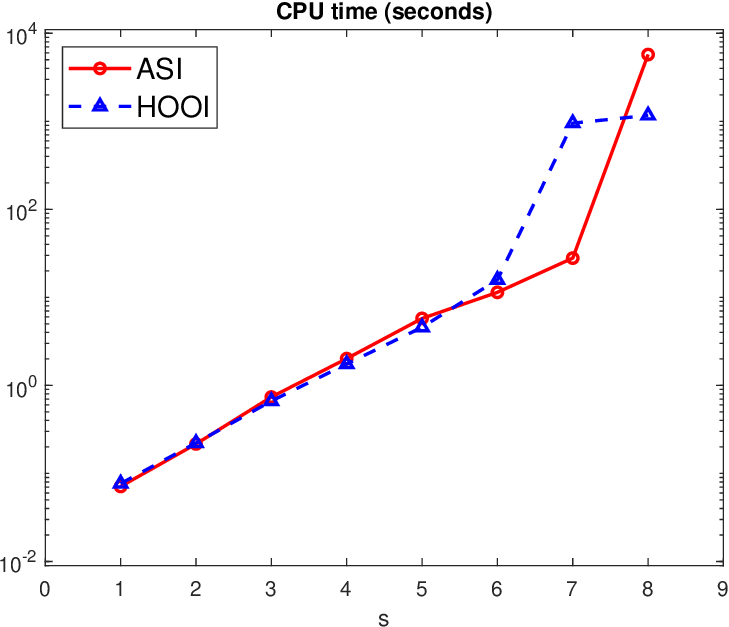}}
  &\resizebox*{0.28\textwidth}{0.15\textheight}{\includegraphics{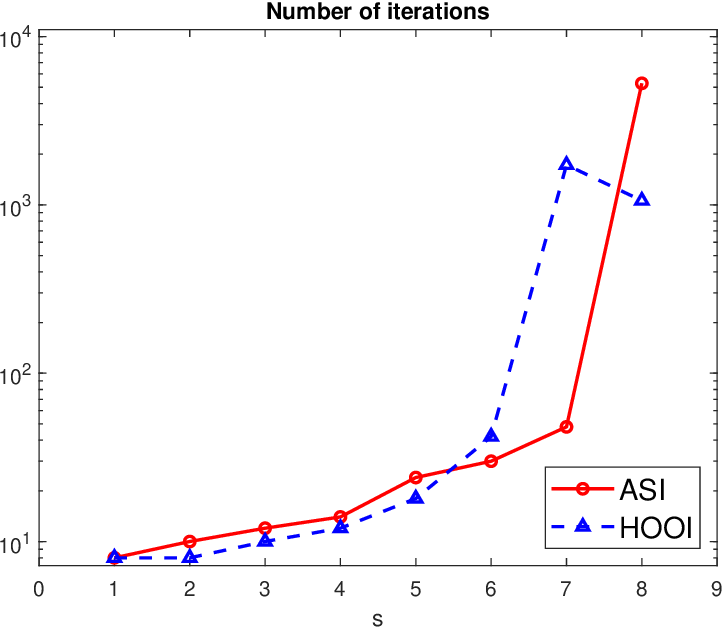}} \\
\rotatebox{90}{\hspace*{1.2cm}$\eta=2^{-5}$}
  &\resizebox*{0.28\textwidth}{0.15\textheight}{\includegraphics{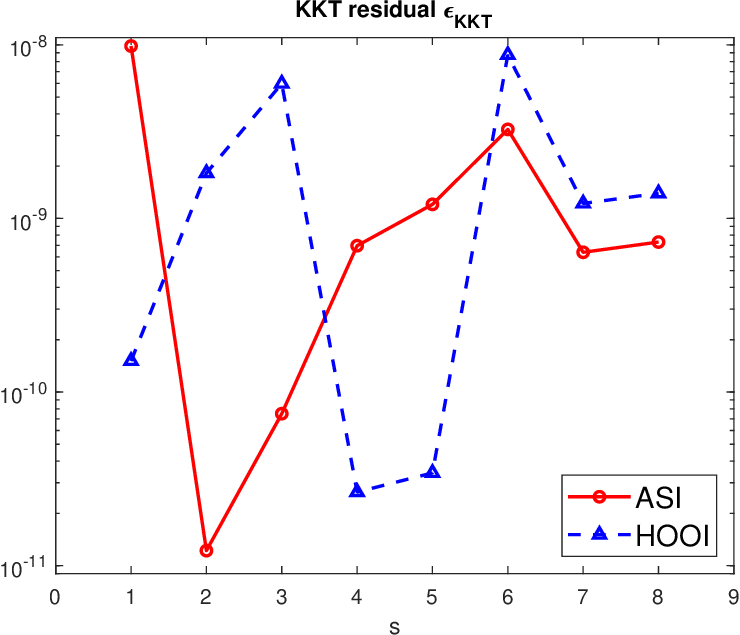}}
  &\resizebox*{0.28\textwidth}{0.15\textheight}{\includegraphics{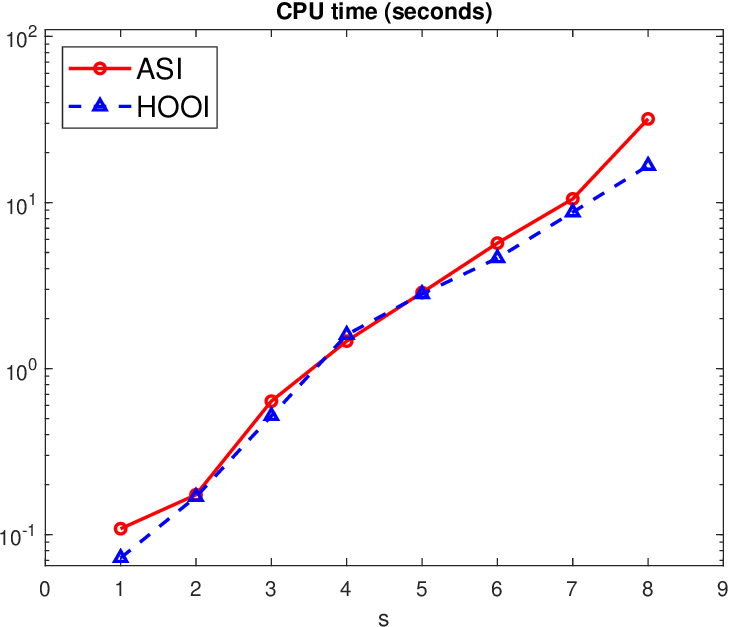}}
  &\resizebox*{0.28\textwidth}{0.15\textheight}{\includegraphics{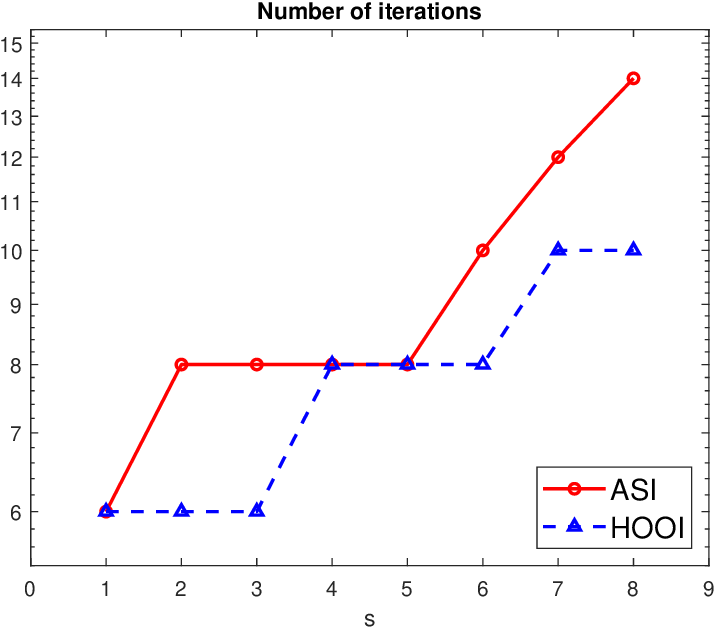}}
\end{tabular}\par
}
\vspace{-0.15 cm}
\caption{\small Scalability of HOOI and ASI on real tensors with $[n_1,n_2,n_3]$ varying according to \eqref{eq:tensor-sizes}
    for $1\le s\le 8$ and $[k_1,k_2,k_3]=[12,11,10]$. {\em Left panel:\/} KKT residual $\tilde\epsilon_{\KKT}$,
    {\em Middle panel:\/} CPU time, and {\em Right panel:\/} the number of iterations.
  }
\label{fig:scale-real}
\end{figure}
For this experiment, we run tests on randomly generated tensors without saving them
and thus we can go for tensors of larger sizes than the ones in the previous subsection.
We still use $[k_1,k_2,k_3]=[12,11,10]$ but let
\begin{equation}\label{eq:tensor-sizes}
[n_1,n_2,n_3]=s\cdot [100, 110, 120]\quad\mbox{for $s=1,2,\ldots, 8$}.
\end{equation}
The last few $s$ yield larger tensors.
In \Cref{fig:scale-real,fig:scale-cmpx} we plot the final KKT residual $\tilde\epsilon_{\KKT,j}$, CPU time, and the number of iterations
as $s$ varies from 1 to 8, for real and complex tensors, respectively. We summarize our observations
as follows:
\begin{enumerate}[(1)]
  \item Both methods converge with final KKT residual $\tilde\epsilon_{\KKT,j}$ about $10^{-8}$ or smaller, and as $\eta$ gets smaller,
        the random TD problems become easier for both methods in terms of the numbers of iterations required and
        consumed CPU time.
  \item Overall HOOI turns to need a fewer number of iterations than ASI does.
  \item HOOI and ASI are competitive to each other in terms of CPU time. The reason
        is $k_{i_1}k_{i_2}=100\ll n_{\ell}$ for $s\ge 2$, and we use the thin SVD on each $C_{\ell}^{(j)}$.
        At each iterative steps HOOI computes three economic SVD of $C_{\ell}^{(j)}\in\bbC^{n_{\ell}\times k_{i_1}k_{i_2}}$
        and ASI also computes three economic SVD of $H_{\ell}^{(j)}P_{\ell}^{(j)}\in\bbC^{n_{\ell}\times k_{\ell}}$,
        which is smaller in size than $C_{\ell}^{(j)}$. The saving from fewer numbers of iterations for HOOI
        and that from smaller sized SVD for ASI seem to balanced out.
\end{enumerate}

\begin{figure}[t]
{\centering
\begin{tabular}{cccc}
\rotatebox{90}{\hspace*{1.2cm}$\eta=2^{-3}$}
  &\resizebox*{0.28\textwidth}{0.15\textheight}{\includegraphics{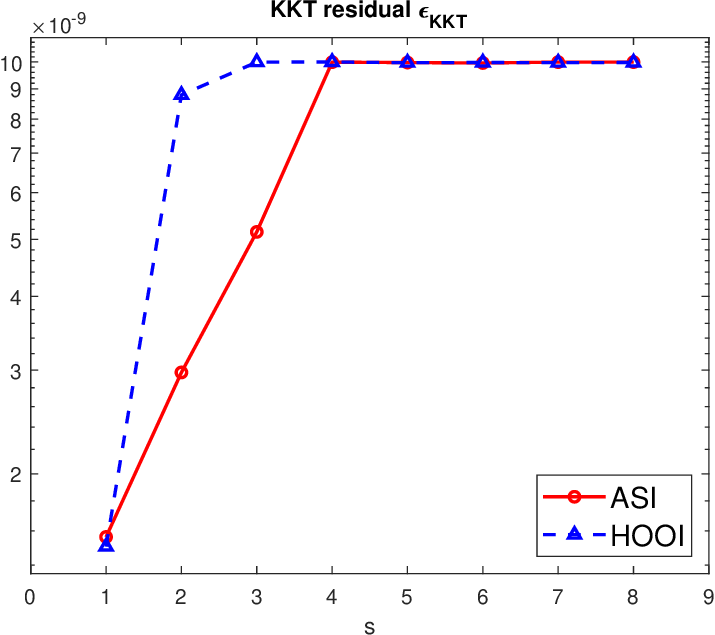}}
  &\resizebox*{0.28\textwidth}{0.15\textheight}{\includegraphics{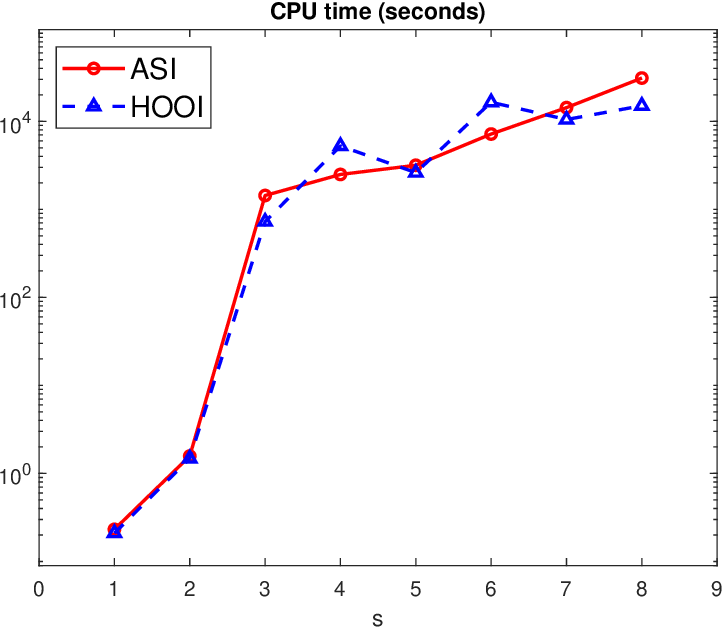}}
  &\resizebox*{0.28\textwidth}{0.15\textheight}{\includegraphics{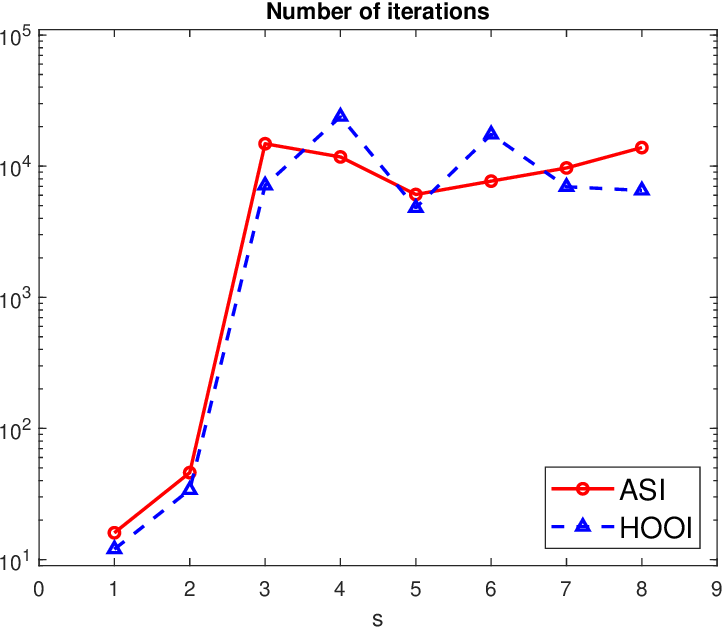}} \\
%
\rotatebox{90}{\hspace*{1.2cm}$\eta=2^{-4}$}
  &\resizebox*{0.28\textwidth}{0.15\textheight}{\includegraphics{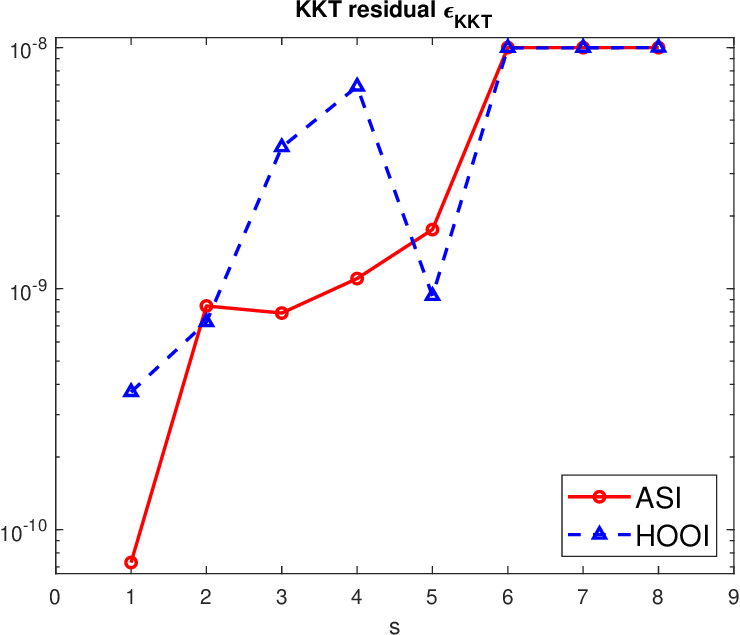}}
  &\resizebox*{0.28\textwidth}{0.15\textheight}{\includegraphics{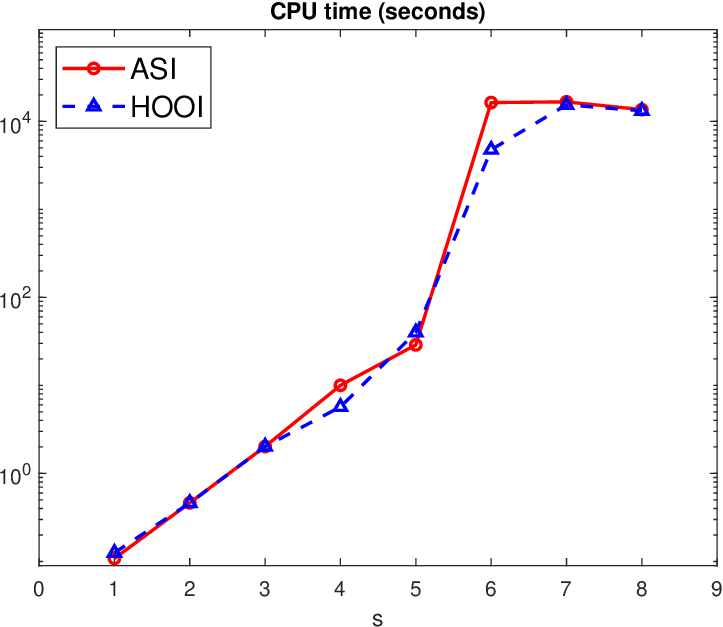}}
  &\resizebox*{0.28\textwidth}{0.15\textheight}{\includegraphics{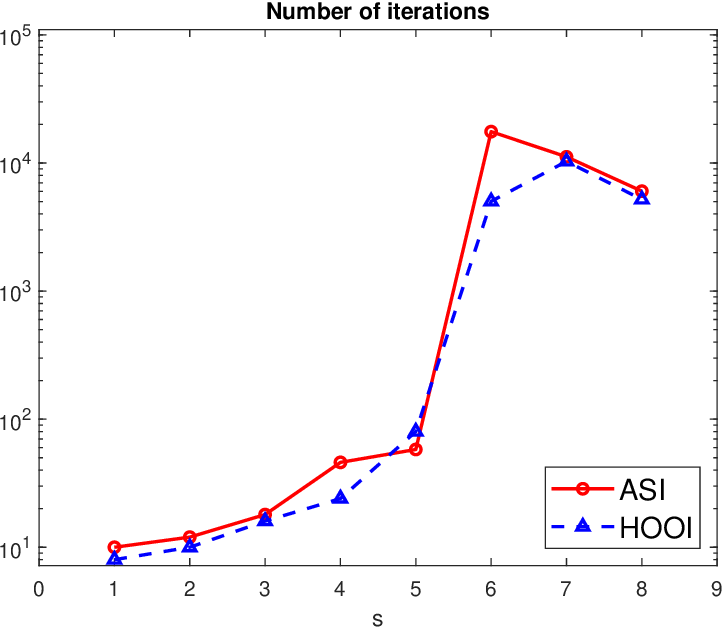}} \\
\rotatebox{90}{\hspace*{1.2cm}$\eta=2^{-5}$}
  &\resizebox*{0.28\textwidth}{0.15\textheight}{\includegraphics{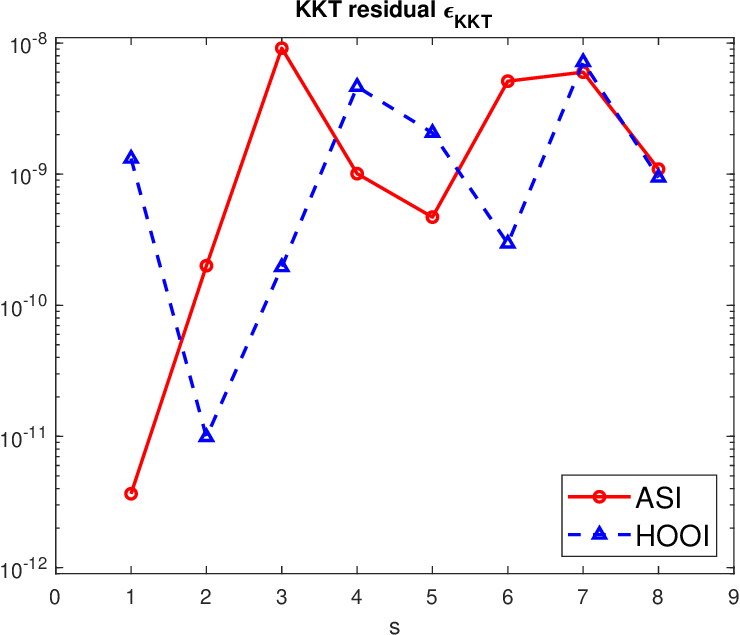}}
  &\resizebox*{0.28\textwidth}{0.15\textheight}{\includegraphics{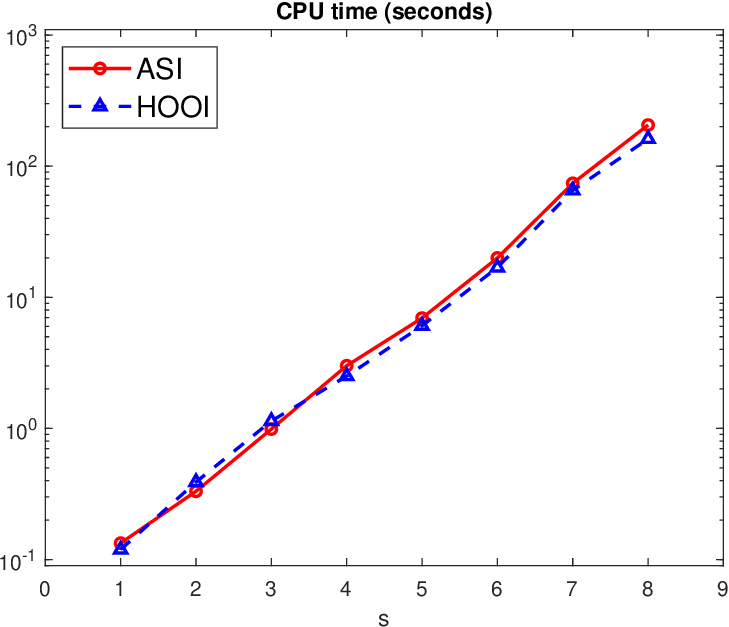}}
  &\resizebox*{0.28\textwidth}{0.15\textheight}{\includegraphics{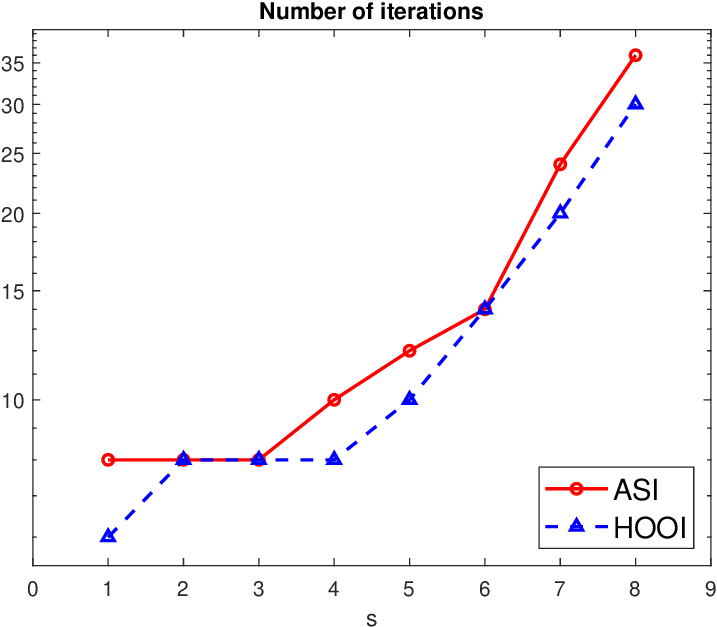}}
\end{tabular}\par
}
\vspace{-0.15 cm}
\caption{\small Scalability of HOOI and ASI on complex tensors with $[n_1,n_2,n_3]$ varying according to \eqref{eq:tensor-sizes}
    for $1\le s\le 8$ and $[k_1,k_2,k_3]=[12,11,10]$. {\em Left panel:\/} KKT residual $\tilde\epsilon_{\KKT}$,
    {\em Middle panel:\/} CPU time, and {\em Right panel:\/} the number of iterations.
  }
\label{fig:scale-cmpx}
\end{figure}

\section{Concluding Remarks}\label{sec:conclu}
We have performed detailed convergence analysis on two popular methods, the higher-order orthogonal iteration (HOOI) \cite{dldv:2000b} and the alternating subspace iteration (ASI) \cite{krdl:1980}, for computing the Tucker decomposition
of a multiple-mode tensor. Previous convergence analysis on ASI was done by
Kroonenberg and De Leeuw~\cite{krdl:1980}  but their analysis has gaps, as we discussed in
\Cref{rk:ASI-cvg}. Also our \Cref{thm:cvg4ASI} contains results that are much deeper than what were concluded in \cite{krdl:1980}.
Xu~\cite{xu:2018} performed convergence analysis on the so-called greedy HOOI, which adds an extra alignment on $P_{\ell}^{(j+1)}$
every time it is computed.
Xu's analysis relies on the Kurdyka–Lojasiewicz theory
for semi-algebraic functions.
Our analysis is more accessible to the numerical linear algebra community, self-contained, and
it treats both real and complex tensors without distinction.

Both \Cref{alg:HOOI,alg:ASI} use the Gauss-Seidel-type updating. Although not detailed,
the Jacobi-type updating works, too, especially for the case when tensor $B$ nearly fits the TD model.
In the Jacobi-type updating, at iteration $j$, $C_{\ell}(\cdots)$ for
$1\le\ell\le m$ can be evaluated at $(P_1^{(j)},P_2^{(j)},\ldots,P_m^{(j)})$ in paralell, instead of
sequentially at $(P_1^{(j+1)},\ldots,P_{\ell-1}^{(j+1)},P_{\ell}^{(j)}\ldots,P_m^{(j)})$. We experimented
with the Jacobi-type updating and found that it still works but for very small $\eta$ (say $\eta\le 2^{-5}$ or smaller).
Both \Cref{alg:HOOI,alg:ASI} can be accelerated by employing the LOCG (locally optimal conjugate gradient) technique.
We have also done experments on that and found HOOI combined with LOCG did not perform better, but ASI with LOCG did.
The latter is a special case of what is in \cite{liwy:2026tbd:arXiv}, to which the reader is referred
 for detail.

{\small
\def\noopsort#1{}\def\l{\char32l}\def\v#1{{\accent20 #1}}
  \let\^^_=\v\def\hbk{hardback}\def\pbk{paperback}

}

\end{document}